\documentclass{ajour}
\usepackage{latexsym,amsfonts,verbatim}
\usepackage{amssymb}
\usepackage{amsmath}

\def\Mat{\mathop{\rm Mat}\nolimits}          
\def\GL{\mathop{\rm GL}\nolimits}

\def\SO{\mathop{\rm SO}\nolimits}
\def\SL{\mathop{\rm SL}\nolimits}
\def\GL{\mathop{\rm GL}\nolimits}            
\def\PSL{\mathop{\rm PSL}\nolimits}          
\def\PSp{\mathop{\rm PSp}\nolimits}           
\def\PGL{\mathop{\rm PGL}\nolimits}           
\def\SU{\mathop{\rm SU}\nolimits}          
\def\PSU{\mathop{\rm PSU}\nolimits}          
\def\Sp{\mathop{\rm Sp}\nolimits}
\def\CSp{\mathop{\rm CSp}\nolimits}
\def\CO{\mathop{\rm CO}\nolimits}
\def\CU{\mathop{\rm CU}\nolimits}

\def\Sym{\mathop{\rm Sym}\nolimits}

\newcommand{\tr}[1]{{\rm Tr}(#1)}
\def\char{\rm char\,}

\def\dim{\ \mathop{\rm dim}\nolimits\ }

\def\Alt{\mathop{\rm Alt}\nolimits}

\def\id{\mathrm{id}}
\def\blockdiag{\mathop{\rm blockdiag}\nolimits}
\def\diag{\mathop{\rm diag}\nolimits}

\pagestyle{empty}
\pagestyle{plain}




\newcommand{\N}{\mathbb{N}}    
\newcommand{\F}{\mathbb{F}}    

\parindent=0pt

\addtolength{\voffset}{-1cm}
\addtolength{\textheight}{4.5cm}
\addtolength{\hoffset}{-1.8cm}
\addtolength{\textwidth}{3.2cm}

\begin{document}
\title{Uniform $(2,k)$-generation of the 4-dimensional classical groups} 

\author{M. A. Pellegrini}
\affil{ Dipartimento di Matematica e Applicazioni, Universit\`a degli Studi di Milano-Bicocca\\ 
  Via R. Cozzi, 53, 20125 Milano Italy}
\email{marco.pellegrini@unimib.it}

\author{M. C. Tamburini Bellani}
\affil{Dipartimento di Matematica e Fisica, Universit\`a Cattolica del Sacro Cuore\\
 Via Musei, 41, 25121 Brescia Italy}
 \email{c.tamburini@dmf.unicatt.it}

\author{M. A. Vsemirnov\footnote{The third author was supported by the Cariplo Foundation, the Russian
Foundation for Basic Research (grant no.~09-01-00784-a) and the
Federal Target Programme ``Scientific and scientific-pedagogical
personnel of the innovative Russia 2009-2013'' (contract no.~P265).}
}
\affil{St.Petersburg Division of Steklov Institute of Mathematics\\
  27, Fontanka, 191023, St.Petersburg, Russia}
\email{vsemir@pdmi.ras.ru}

\abstract{In this paper we study the $(2,k)$-generation of the finite classical groups
$\SL_4(q)$, $\Sp_4(q)$, $\SU_4(q^2)$ and their projective images. Here $k$ is the order of 
an arbitrary element of $\SL_2(q)$, subject to the necessary condition $k\ge 3$. 
When $q$ is even we allow also $k=4$. }

\section{Introduction}

We recall that a group is $(2,k)$-generated if it can be generated by two
elements of respective orders $2$ and $k$. Our aim is to find uniform
$(2,k)$-generators of the 4-dimensional classical groups.
Since two involutions generate a dihedral group, we assume $k\geq 3$.

In this problem a special case of a formula of L. Scott \cite[Theorem 1]{Sc} plays a crucial role. 
E.g. it gives constraints for the similarity invariants  of a $(2,k)$-generating pair
of the groups under consideration (see \eqref{one}). Moreover it gives a rigidity criterion,
proved by Strambach and V\"olklein \cite[Theorem 2.3]{SV}, which is very useful (see \eqref{rigidity}).
We acknowledge A. Zalesskii for having brought our attention to this subject
and to a systematic study of Scott's result.
Sections \ref{Scott's formula} and \ref{preserving}
are dedicated to this study, aiming to develop more general techniques for 
linear groups. 

In Section \ref{shape} we fix the canonical forms of 
our uniform generators $x,y$. Having in mind the $(2,k)$-generation of the 
projective images of the classical groups, we allow 
$x^2=\pm I$. On the other hand both $y$ and its projective image have order $k$. 
The choice of the canonical form of $y$ is determined, for uniformity reasons,
by the case $k=3$ (see the beginning of Section \ref{negres}). 
We characterize the shapes of $x$ and $y$, up to conjugation, subject to the condition
that the group $\left\langle x,y\right\rangle$ is absolutely irreducible (see \eqref{generators}).
The matrix $x$ is uniquely determined by its order,
$y$ has an entry $s$ which determines its order $k$ and 
four indeterminate entries $r_1,\dots , r_4$: their values 
which still produce a reducible group
are described by Lemma \ref{reducible}.

In Section \ref{negres}, we are ready to prove a list 
of negative results. They show that,  
apart from the groups $\Sp_4(q)$, which probably require
a generator of order $k\geq 4$ with similarity invariants other 
than  $y$, our positive results are the best possible. 
In particular the following groups are not $(2,3)$-generated: 
$\SL_4(2)$, $\Sp_4(q)$ for all $q$, $\PSp_4(2^a)$, $\PSp_4(3^a)$, $\SU_4(9)$ and 
$\PSU_4(9)$.
Moreover $\SL_4(3)$, $\SU_4(9)$ and $\PSU_4(9)$ are not $(2,4)$-generated:
this last fact has required an unexpected amount of details.
The exception of symplectic groups 
were detected by Liebeck and Shalev in \cite{LS}.
Here we give an alternative proof.

In Section \ref{sec:further} we specialize the values of the parameters
$r_1,\dots , r_4$, aiming to our positive results.  Let
$\F$ be an algebraically closed field of characteristic $p>0$.
If $(k,p)=1$, let us denote by $\epsilon$ a primitive $k$-th root of unity in $\F$.
If $k=p$ or $k=2p$, we set respectively  $\epsilon=1$ or $\epsilon =-1$. 
Writing $s=\epsilon+\epsilon^{-1}$, our uniform generators have shapes:
$$
x=\left(
\begin{array}{cccc}
0&0&1&0\\
0&0&0&1\\
d&0&0&0\\
0&d&0&0
\end{array}\right),\ d=\pm 1, \qquad
y=\left(
\begin{array}{cccc}
1&0&0&r_2\\
0&1&0&r_4\\
0&0&0&-1\\
0&0&1&s
\end{array}\right),\ r_4\ne 0.
$$
By Corollary \ref{newreducible} the group  $H=\left\langle x,y\right\rangle$ is absolutely irreducible 
provided $r_2\ne -\epsilon^{\pm 1} r_4$
and $r_2 + r_4\ne\pm (2- s)\sqrt d$. 
We make the assumption that $H$ is absolutely irreducible, which 
has several consequences.

First of all, by  Remark \ref{rigid triple}, the triple $(x,y,xy)$ is rigid and
$xy$ must have a unique similarity invariant. 
It follows that, for any field automorphism $\sigma$, 
the matrices $(xy)^\sigma$ and $(xy)^{-1}$ are conjugate
if and only if their  characteristic polynomials are the same.
They are respectively:
$$\begin{array}{cccccc}
\chi_{(xy)^\sigma}(t)\hfill &=& t^4-dr_4^\sigma t^3-ds^\sigma t^2-r_2^\sigma t+1,\hfill  \\
\noalign{\smallskip}
\chi_{(xy)^{-1}}(t)\hfill &=& t^4-r_2t^3-dst^2-dr_4t+1.\hfill
\end{array}$$
Suppose that $r_2$, $r_4$ and $s$ belong to $\F_q$. If $H\leq \Sp_4(q)$,  
then $xy$ is conjugate to its inverse, and we obtain the necessary condition
$r_2=dr_4$. If $d=1$ and  $r_2=r_4$,  then
$H\leq \SO^{\pm}(q)$ by Theorem \ref{classic_positive}(ii).
This fact and Theorem \ref{nr}(iv) explain why, in the symplectic case, we 
must assume $d=-1$ and $p$ odd.
On the other hand, the conditions $d=-1$, $p$ odd and $r_2=-r_4$ 
are sufficient to guarantee that $H\leq \Sp_4(q)$
by Theorem \ref{classic_positive}(v).
Next suppose that $r_2, r_4\in \F_{q^2}$, $s\in \F_q$. If $H\leq \SU_4(q^2)$,  
taking the Frobenius map
$\alpha\mapsto \alpha^q$ as the field automorphism $\sigma$, we obtain the necessary condition $r_2=dr_4^q$.
On the other hand, this condition  is sufficient to guarantee 
that $H$ is contained in $\SU_4(q^2)$ by Theorem \ref{forms}(ii).

Now suppose that the group $H$ is contained in one 
of the classical groups under consideration.
If $H$ is not the whole group, then it is contained in some maximal subgroup $M$.
Following Aschbacher's structure Theorem \cite{A84},
the group $M$ belongs to one of nine classes. Eight of them,  denoted from 
${\mathcal C_1}$ to ${\mathcal C_8}$, correspond to natural subgroups.
The remaining class $\mathcal{S}$ results from absolutely irreducible representations of finite simple groups.
So, for fixed $q$ and $s$,we have to exclude all values of $r_2$ and $r_4$ for which $H$ is 
contained in some $M$ in the above classes.
If $M\in {\mathcal C_1}\cup{\mathcal C_3}$,
then it is reducible over $\F$: thus Corollary \ref{newreducible} takes care of
this case. If  $M\in {\mathcal C_2}$, it stabilizes a direct sum decomposition: this
possibility is considered in Section \ref{C2}. If $M\in {\mathcal C_5}$, 
it stabilizes a subfield: see Lemma \ref{minimal field}. 
The case $M\in {\mathcal C_8}$, the class   of classical subgroups, is studied
in Theorem \ref{classic}. In dimension 4, this analysis 
includes also the  case $M\in {\mathcal C_4}\cup {\mathcal C_7}$.
Indeed, up to conjugation, such an $M$ is contained in ${\rm GL}_2(\mathbb{F})\otimes {\rm GL}_2(\mathbb{F})$: 
hence it fixes, up to a scalar,
the matrix $J\otimes J$, where $J=$ antidiag$(1,-1)$.
Finally, the cases $M\in {\mathcal C_6}$ and $M\in {\mathcal S}$ are considered
in Sections  \ref{C6} and  \ref{S} respectively.  The conditions obtained, except those given
by Lemma \ref{minimal field}, are summarized in
Table  \ref{tab:4}.

We are not aware of any published list of the maximal subgroups of the finite 4-dimensional
classical groups. So our reference was the Ph.D. thesis of Kleidman \cite{K}, whose list
is based on the work of several authors, namely \cite{F}, \cite{KL82}, \cite{M}, \cite{M76},  \cite{M82}, \cite{SZ}, \cite{Z}.

The last Section contains our positive results. 
Their precise statements, formulated in Theorems \ref{sl4}, \ref{sp4}, \ref{su4} and
\ref{thm:su4-2}, can be roughly summarized as follows. 
Up to a finite number of exceptions, completely determined, we have that:

\smallskip
$\bullet$ if $d=\pm 1$, $0=r_2$, $0\ne r_4\in \F_q$ and $\F_q=\F_p\!\left[s, r_4^2\right]$, then
$\left\langle x,y\right\rangle =\SL_4(q)$;

\smallskip
$\bullet$ if $d=-1$, $r_2=-r_4$, $p\ne 2$, $s\ne 2$, $0\ne r_4\in \F_q$ and $\F_q=\F_p\!\left[s, r_4^2\right]$, 
then $\left\langle x,y\right\rangle =\Sp_4(q)$;

\smallskip
$\bullet$ if $d=\pm 1$, $s\in \F_q$, $r_2=d r_4^q$,  
$0\ne r_4\in \F_{q^2}$ and $\F_{q^2}=\F_p\!\left[r_4^2\right]$, then
$\left\langle x,y\right\rangle =\SU_4(q^2)$.

\smallskip
As a consequence we obtain that, for all $k\ge 3$ such that
$k\mid (q-1)$ or $k\mid (q+1)$ or $k=p$ or $k=2p$, the following simple groups are $(2,k)$-generated:
$$\PSL_4(q),\ q>2,\quad \PSp_4(q),\ k\ne p,\ q\  {\rm odd},\quad \PSU_4(q^2),\ q>3.$$
In particular they are $(2,3)$-generated, 
except $\PSp_4(2^a)$ and $\PSp_4(3^a)$. 

Actually other papers establish explicitly the $(2,3)$-generation
of many of them. To our knowledge the groups $\PSL_4(q)$ were considered in
\cite{MT},  \cite{TV1}, \cite{TV2} 
and the groups $\PSp_4(q)$ were considered in \cite{CD}.

More recent work in a related area is due to Marion \cite{Ma}, who studies the groups  
$\PSL_n(q)$, $n\le 3$, which are epimorphic images of a given hyperbolic triangle group.

Finally we note that most of our results are computer independent. Nevertheless MAGMA and GAP have
been of great help in the computational aspects of this paper.

\section{Scott's formula and rigidity}\label{Scott's formula}

We recall some basic consequences of Scott's formula  \cite[Theorem 1]{Sc}: 
for their background we refer to \cite{TV}.  Here $\left\langle X,Y\right\rangle$ 
denotes  an absolutely irreducible sugbgroup  of $\GL_n(L)$, where $L$ is a field.
For a subset $K$ of $M=\Mat_n(L)$, let $d^K_M$ be the dimension of $C_M(K)$. 
Then, with respect to the conjugation action of $\left\langle X,Y\right\rangle$ on $M$,
Scott's formula gives the condition:
\begin{equation}\label{one}
d^X_M+d^Y_M+d^{XY}_M\ \le\ n^2+2.
\end{equation}
Moreover, if equality holds, namely if
\begin{equation}\label{rigidity}
d^X_M+d^Y_M+d^{XY}_M\ =\ n^2+2
\end{equation}
the triple $(X,Y,XY)$ is rigid \cite[Theorem 2.3]{SV}. This means that,
for any other triple $(X',Y',X'Y')$ with the same similarity invariants
as $(X,Y,XY)$, there exists $g\in \GL_n(L)$ such that $X'=X^g$ and
$Y'=Y^g$. In particular the groups $\left\langle X,Y\right\rangle$  and $\left\langle X',Y'\right\rangle$ 
are conjugate.

We may identify the space $L^n\otimes L^n$
with $M$, via the linear extension of the map
$e_i\otimes e_j\mapsto e_ie_j^T$. In particular 
the symmetric square $S$ of $L^n\otimes L^n$ is identified 
with the space of symmetric matrices. 
Clearly, for any $g\in K$ as above, 
the diagonal element $g\otimes g$ acts as $m\mapsto gmg^T$,  for all $m\in M$.
Now let  us denote respectively by $d_S^K$ and $\hat d_S^K$ 
the dimension of the space of $K$-fixed points on $S$ and on its dual.
In this case, Scott's formula gives the condition:
\begin{equation}\label{two}
d_S^X+d_S^Y+d_S^{XY}\leq \frac{n(n+1)}{2} +d_S^{\left\langle X,Y\right\rangle}+
 \hat d_S^{\left\langle X,Y\right\rangle}.
\end{equation}
By \cite[Lemma 1]{TV}, setting $K=\left\langle X,Y\right\rangle$, we have $d_S^{K}\le 1 $,
$\hat d_S^{K} \le 1$. If $\char L \ne 2$, then
$d_S^{K}=\hat d_S^{K}$.  
Moreover if $\hat d_S^{K} =1$,
then $d_S^{K}=1$ and $K$ is contained in an orthogonal group.

In characteristic $2$ it may happen that 
$\hat d_S^{K} =0$, and 
$d_S^{K}=1$. To exclude this possibility
in certain situations, it may be useful  the following:

\begin{lemma}\label{char2}  Let $g\in \GL_n(L)$, with $\char L = 2$.
Assume that $g$ is conjugate to its
inverse.  Then $d_S^g \ge n/2$ if
$n$ is even, $d_S^g \ge (n+1)/2$ if $n$ is odd.
\end{lemma}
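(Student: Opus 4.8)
The plan is to deduce the bound on symmetric fixed points from a bound on the full fixed space, exploiting a purely characteristic-$2$ phenomenon. Recall that $g$ acts on $M=\Mat_n(L)$ by $X\mapsto gXg^T$ and that $d_S^g$ is the dimension of the fixed space of this action on the subspace $S$ of symmetric matrices. First I would pass to the larger space
$$F=\{X\in M : gXg^T=X\},$$
of which the symmetric elements are precisely what $d_S^g$ measures.

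The first step is to compute $\dim F$. Rewriting the defining relation as $gX=X(g^{-1})^T$ shows that $X\in F$ exactly when $X$ intertwines $g$ with $(g^{-1})^T$. Now every matrix is similar to its transpose, so $(g^{-1})^T$ is similar to $g^{-1}$; and $g^{-1}$ is similar to $g$ by hypothesis. Hence $(g^{-1})^T=QgQ^{-1}$ for some $Q\in\GL_n(L)$, and substituting turns the relation into $(XQ)g=g(XQ)$. Thus $X\mapsto XQ$ is a linear isomorphism from $F$ onto the centraliser $C_M(g)$, whence $\dim F=d_M^g$. This is the only place the hypothesis $g\sim g^{-1}$ enters, and I regard the identification of $F$ with a centraliser as the conceptual core of the argument.

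The second step is the characteristic-$2$ trick. Transposition $\tau:X\mapsto X^T$ maps $F$ to itself (transpose the relation $gXg^T=X$), and by definition $d_S^g=\dim\ker(\tau-\id)$ on $F$. Since $\tau^2=\id$ and $\char L=2$, we have $(\tau-\id)^2=\tau^2-2\tau+\id=2\,\id=0$, so $\tau-\id$ is nilpotent of index at most $2$. Therefore $\operatorname{im}(\tau-\id)\subseteq\ker(\tau-\id)$, and rank-nullity forces $\dim\ker(\tau-\id)\ge\tfrac12\dim F$. Combining the two steps gives $d_S^g\ge\tfrac12\,d_M^g$.

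Finally I would invoke the standard fact that the centraliser of any matrix has dimension at least $n$ (with equality exactly when $g$ is nonderogatory), giving $d_S^g\ge n/2$. For even $n$ this is the assertion; for odd $n$, integrality of $d_S^g$ upgrades $d_S^g\ge n/2$ to $d_S^g\ge(n+1)/2$. The only real subtlety, and what I expect to be the main obstacle to presenting cleanly, is the characteristic-$2$ collision: in odd characteristic one simply splits $F$ into the $\pm 1$-eigenspaces of $\tau$, but here symmetric and skew-symmetric matrices coincide, and it is the unipotence of $\tau$ that still guarantees at least half of $F$ to be symmetric.
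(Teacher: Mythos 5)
Your proof is correct and is essentially the paper's own argument: both identify $F=\{X\in\Mat_n(L)\mid gXg^T=X\}$ with the centralizer $C_{\Mat_n(L)}(g)$ (combining $g\sim g^{-1}$ with $g\sim g^T$) to obtain $\dim F=d_M^g\ge n$, and both exploit the same characteristic-$2$ map $X\mapsto X+X^T$ on $F$, whose image and kernel land in the symmetric fixed points, to conclude that at least half of $F$ lies in $S$. Your phrasing via nilpotence of $\tau-\id$ and rank-nullity is the same computation as the paper's observation that one of the image or kernel of $m\mapsto m+m^T$ has dimension at least $\tfrac12\dim F$.
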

\begin{proof}
Let $V=\left\{m\in \Mat_n(L)\mid gmg^T=m\right\}$. Since $g$ is conjugate to its inverse,
there exists $h\in \GL_n(L)$ such that $g ^T=h^{-1}g^{-1}h$.
It follows that $gmg^T=m$ if and only if $g$ centralizes $mh^{-1}$, whence
$\dim V =  d_M^g \ge n$. This inequality can be seen noting that, for each companion matrix $c$
of the rational form of $g$, the algebra $L[c]$ centralizes $c$.
Now consider the map from $V$ to $S$:
$m \mapsto m+m^T$.
The image of this map lies in $V \cap S$. Since  $\char L=2$,
the kernel is also in $V\cap S$. Since at least one of the dimensions
of the image and of the kernel is at least half of the dimension of $V$,
this completes the proof.
\end{proof}

\section{Groups preserving a form}\label{preserving}

The following theorem in the unitary case appeared in \cite[Lemma 6.2]{V2009}.
When $L$ is a finite field see also \cite[Theorem 2.12]{VZ}.

\begin{theorem}\label{forms}
Let $L$ be a field and  $\sigma\in \mathrm{Aut}(L)$. Let $X,Y\in
\mathrm{GL}_n(L)$. Suppose that $X^\sigma \sim X^{-1}$, $Y^\sigma
\sim Y^{-1}$, $(XY)^\sigma \sim (XY)^{-1}$. Assume further that
$\langle X, Y\rangle$ is absolutely irreducible and that \eqref{rigidity} holds.

{\rm (i)}  If $\sigma=\mathrm{id}$, then $\langle X, Y\rangle$
fixes a non-degenerate symmetric or skew-symmetric form.

{\rm(ii)}  If $\sigma$ is an involution, then $\langle X,Y\rangle$ 
fixes a non-degenerate hermitian form.
\end{theorem}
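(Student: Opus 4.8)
The plan is to deduce both parts at one stroke from the rigidity hypothesis \eqref{rigidity}, rather than from the symmetric-square inequality \eqref{two}; this keeps the argument uniform in the characteristic. The observation driving everything is that a nondegenerate $\sigma$-sesquilinear invariant form for $K=\langle X,Y\rangle$ is the same datum as a $K$-module isomorphism between the natural module $W=L^n$ and its $\sigma$-twisted dual $W^{\sigma*}$, on which $g$ acts by $\tilde g:=\left((g^\sigma)^T\right)^{-1}$. Indeed a matrix $C$ satisfies $(g^\sigma)^TCg=C$ for all $g\in K$ precisely when $C$ intertwines $g$ with $\tilde g$; and $g\mapsto\tilde g$ is a genuine representation because transposition reverses products while $\sigma$ does not.

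First I would check that the triple $(\tilde X,\tilde Y,\tilde X\tilde Y)$ has the same similarity invariants as $(X,Y,XY)$. Every matrix is similar to its transpose, so $(g^\sigma)^T\sim g^\sigma$; combined with the hypotheses $X^\sigma\sim X^{-1}$, $Y^\sigma\sim Y^{-1}$, $(XY)^\sigma\sim (XY)^{-1}$ and the fact that inversion preserves similarity classes, this yields $\tilde X\sim X$, $\tilde Y\sim Y$ and $\tilde X\tilde Y=\left(((XY)^\sigma)^T\right)^{-1}\sim XY$. Since $W^{\sigma*}$ is again absolutely irreducible, the triple $(\tilde X,\tilde Y,\tilde X\tilde Y)$ is an absolutely irreducible solution carrying the prescribed conjugacy classes, so rigidity \eqref{rigidity} (via \cite[Theorem 2.3]{SV}) produces a single $B\in\GL_n(L)$ with $\tilde X=B^{-1}XB$ and $\tilde Y=B^{-1}YB$. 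Rewriting $\tilde X=B^{-1}XB$ as $XB(X^\sigma)^T=B$, and likewise for $Y$, the matrix $C:=B^{-1}$ satisfies $(g^\sigma)^TCg=C$ for $g\in\{X,Y\}$ and hence for all $g\in K$. Thus $K$ fixes the nonzero form $C$; its radical is a $K$-submodule of $W$, so by absolute irreducibility $C$ is nondegenerate.

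It remains to pin down the symmetry type. Because $W$ is absolutely irreducible, $\mathrm{End}_K(W)=L$, so the space of $K$-invariant forms is at most one-dimensional over $L$. A direct computation (apply $\sigma$ to $(g^\sigma)^TCg=C$, then transpose) shows that $(C^\sigma)^T$ is again $K$-invariant, whence $(C^\sigma)^T=\lambda C$ for some $\lambda\in L^\times$; applying the same operation twice returns $C$ and forces $\lambda\lambda^\sigma=1$. In case (i), where $\sigma=\mathrm{id}$, this reads $\lambda^2=1$, so $\lambda=\pm1$ and $C$ is symmetric or skew-symmetric. In case (ii), $\sigma$ is an involution, $L/L^\sigma$ is a quadratic Galois extension, and Hilbert's Theorem 90 supplies $\mu\in L^\times$ with $\lambda=\mu/\mu^\sigma$; then $\mu C$ is a nondegenerate Hermitian form fixed by $K$.

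The main obstacle is really the first step: arranging that the three hypotheses combine, through transpose-similarity, to make $(\tilde X,\tilde Y,\tilde X\tilde Y)$ carry exactly the conjugacy data of $(X,Y,XY)$, so that rigidity can be invoked. Once the intertwiner $B$ is in hand the form and its nondegeneracy are immediate, and only the short Schur-plus-Hilbert-90 bookkeeping remains. I would stress that this route sidesteps the characteristic-two phenomenon recorded before Lemma \ref{char2}, namely that one may have $\hat d_S^K=0$ while $d_S^K=1$; that subtlety would instead have to be resolved if one derived the form by running Scott's inequality \eqref{two} on the symmetric and exterior squares and matching dimensions against \eqref{rigidity}.
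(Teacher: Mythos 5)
Your proof is correct, and it takes a genuinely different route from the paper's. Inside this proof the paper never invokes linear rigidity as such: it applies Scott's formula a second time, to the twisted module $M^\phi=\Mat_n(L)$ with action $\phi(h).m=h^\sigma m h^T$, uses the hypothesis $h^\sigma\sim h^{-1}$ to identify $d^h_{M^\phi}=d^h_M$ for $h\in\{X,Y,XY\}$, so that \eqref{rigidity} forces $d^{\langle X,Y\rangle}_{M^\phi}+\hat d^{\langle X,Y\rangle}_{M^\phi}\ge 2$, and then splits into two cases (a fixed vector of $M^\phi$, shown nondegenerate and inverted to produce a fixed vector of the dual; or a fixed vector of the dual, which is the invariant form). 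You instead hand the twisted triple $(\tilde X,\tilde Y,\tilde X\tilde Y)$, with $\tilde g=\left((g^\sigma)^T\right)^{-1}$, to the rigidity statement recorded in Section \ref{Scott's formula} as a consequence of \eqref{rigidity} and \cite[Theorem 2.3]{SV}; the resulting conjugator $B$ is itself (the inverse of) the Gram matrix. Your route buys a cleaner endgame: no case split between the module and its dual, and nondegeneracy for free, since $C=B^{-1}$ is invertible by construction (so your radical argument is redundant). The paper's route buys independence from the rigidity theorem: it consumes only Scott's inequality, whereas you need the full strength of \cite[Theorem 2.3]{SV}, including the point you rightly verified, that the comparison triple generates an absolutely irreducible group. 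From the invariant form onward the two arguments coincide: uniqueness up to scalar via Schur, the relation $(C^\sigma)^T=\lambda C$ with $\lambda\lambda^\sigma=1$, hence $\lambda=\pm1$ when $\sigma=\id$, and Hilbert's Theorem 90 for $L$ over ${\rm Inv}(\sigma)$ in the involutory case; this is exactly the paper's $\beta$-argument. One correction to your closing remark: the paper's proof of this theorem also does not go through the symmetric square \eqref{two}, so the characteristic-two subtlety before Lemma \ref{char2} is not an obstacle for either route; it only matters for arguments in the style of Lemma \ref{negative} and Corollary \ref{cor to 3.1}.
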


\begin{proof}
Let $M^\phi=\mathrm{Mat}_n(L)$ be the $\langle X,Y\rangle$-module
equipped with the following action $\phi$ on it:
 $$
 \phi(h).m =h^\sigma m h^T,
 $$
for all $h\in \langle X, Y\rangle$. Using the non-degenerate pairing $(m_1,m_2)=\tr{m_1m_2}$
we can identify $M^\phi$ with his dual. Via this identification, the dual representation $\phi^\ast$ is equivalent to
$\hat\phi$:
 $$
 \hat\phi(h).m =(h^T)^{-1} m (h^\sigma)^{-1}.
 $$

Clearly $h^\sigma$ is conjugate to its
transpose. If $h^\sigma\sim h^{-1}$, then choose an invertible matrix
$g$ such that $(h^\sigma)^T=g h^{-1} g^{-1}$. Then $h^\sigma m h^T=m$
if and only if $h(m^Tg)h^{-1}=m^Tg$. In particular,
$d_{M^\phi}^h=d_M^h$. Now, Scott's formula for the module $M^\phi$
together with  the assumptions of the Theorem and \eqref{rigidity} imply that either
$d_{M^\phi}^{\langle X, Y\rangle}\ge 1$ or $\hat{d}_{M^\phi}^{\langle
X, Y\rangle}\ge 1$.

First, we show that $d_{M^\phi}^{\langle X, Y\rangle}\ge 1$ yields
$\hat{d}_{M^\phi}^{\langle X, Y\rangle}\ge 1$. To this purpose assume that, for some
$m\neq 0$,
 \begin{equation}
 \label{eq:hmht=m}
 h^\sigma m h^T=m
 \end{equation}
for every  $h\in \langle X, Y\rangle$. Let $V$ be the
eigenspace of $m$ relative to 0. In particular $V\neq L^n$,
as $m\neq 0$. For any $h\in \langle X, Y\rangle$ and any $v\in V$, we
have $mh^Tv=(h^\sigma)^{-1} mv=0$. Therefore, $V$ is $\langle X^T,
Y^T\rangle$-invariant. By the absolute irreducibility 
of $\langle X,Y\rangle$ it follows $V=\{0\}$,  i.e., $m$ is non-degenerate. 
Inverting both
sides of (\ref{eq:hmht=m}), we have
$(h^T)^{-1}m^{-1}(h^\sigma)^{-1}=m^{-1}$. Hence,
$h^Tm^{-1}h^\sigma=m^{-1}$ and, since $\hat\phi$ is equivalent to
the dual representation $\phi^\ast$, we have  $\hat{d}_{M^\phi}^{\langle X,Y\rangle}\ge 1$, as desired.

Now consider the case $\hat{d}_{M^\phi}^{\langle X, Y\rangle}\ge 1$.
Assume that  $m\neq 0$ is such that
the equality
 \begin{equation}
 \label{eq:hmh=m-2}
 (h^T)^{-1} m (h^\sigma)^{-1}=m
 \end{equation}
holds for any $h\in \langle X, Y\rangle$. 
We show that $m$ is invertible. Let $V$ be the eigenspace of $m^T$
relative to 0. For any $h\in \langle X, Y\rangle$ and any $v\in V$,
we have $(hv)^Tm=v^Tm(h^\sigma)^{-1}=0$, whence $m^T(hv)=0$. Thus $V$ is $\langle X,
Y\rangle$-invariant and $V=\{0\}$ by the absolute irreducibility of
$\langle X, Y\rangle$. Therefore, $m$ is non-degenerate. In
particular, this implies that any two non-zero matrices in
$\mathrm{Mat}_n(L)$ satisfying (\ref{eq:hmh=m-2}) must be
proportional.

Equation (\ref{eq:hmh=m-2}) shows that $\langle X, Y\rangle$ fixes a
bilinear form $m$ defined over $L$. 

Transpose both sides of (\ref{eq:hmh=m-2}) and apply $\sigma$. We
have
$$
  (h^T)^{-1} (m^T)^\sigma (h^\sigma)^{-1}=(m^T)^\sigma.
$$
By what observed above,
\begin{equation}
\label{eq:beta}
 (m^T)^\sigma=\beta m \textrm{ for some } \beta\in L^\ast.
\end{equation}

(i) Assume that $\sigma=\mathrm{id}$. Repeating (\ref{eq:beta})
twice, we have $\beta=\pm1$, i.e., $m$ is either symmetric or
skew-symmetric.

(ii) Assume that $\sigma$ is an involution. Let $F={\rm Inv}(\sigma)$ be the 
subfield fixed pointwise by $\sigma$.  Our next aim is to find a
suitable scalar $\alpha\in L^\ast$ such that $\alpha m$ is hermitian,
i.e., $((\alpha m)^\sigma)^T = \alpha m$.  Iterating (\ref{eq:beta})
we have that $\beta\beta^\sigma=1$. By Hilbert's Theorem 90
 \cite[page 297]{J} for the extension $L/F$, there is $\alpha$ such that
$\beta=\alpha/\alpha^\sigma$. Therefore,
 $$
  ((\alpha m)^\sigma)^T=\alpha^\sigma \beta m =\alpha m,
 $$
as desired.
\end{proof}

\begin{corollary}\label{conformal}
Let $L$ be a field and  $\sigma\in \mathrm{Aut}(L)$. Let $X,Y\in
\mathrm{GL}_n(L)$. Suppose that for some $\lambda$, $\mu\in L$ we
have $X^\sigma \sim \lambda\lambda^\sigma X^{-1}$, $Y^\sigma \sim \mu\mu^\sigma Y^{-1}$,
$(XY)^\sigma \sim \lambda\mu(\lambda\mu)^\sigma (XY)^{-1}$. Assume further that
$\langle X, Y\rangle$ is absolutely irreducible and \eqref{rigidity} holds.

{\rm(i)} If $\sigma=\mathrm{id}$, then $\langle X, Y\rangle$ is
contained in a conformal orthogonal or in the conformal symplectic
group.

{\rm(ii)} If $\sigma$ is an involution, then $\langle X, Y\rangle$ is
contained in a conformal unitary group. Moreover, if $X$, $Y\in
\mathrm{GL}_n(F)$, where $F={\rm Inv}(\sigma)$ is the 
subfield fixed pointwise by $\sigma$,  then $\langle X, Y\rangle$ is contained in a conformal
orthogonal or in the conformal symplectic group defined over $F$.
\end{corollary}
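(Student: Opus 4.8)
The plan is to reduce everything to Theorem \ref{forms} by absorbing the scalars $\lambda,\mu$ into $X$ and $Y$. Observe first that, since $X^\sigma$ and $X^{-1}$ are invertible, the hypothesis $X^\sigma\sim\lambda\lambda^\sigma X^{-1}$ forces $\lambda\lambda^\sigma\neq0$, so $\lambda\in L^\ast$; likewise $\mu\in L^\ast$. Set $X'=\lambda^{-1}X$ and $Y'=\mu^{-1}Y$. A direct computation gives $(X')^\sigma=(\lambda^\sigma)^{-1}X^\sigma\sim(\lambda^\sigma)^{-1}\lambda\lambda^\sigma X^{-1}=\lambda X^{-1}=(X')^{-1}$, and similarly $(Y')^\sigma\sim(Y')^{-1}$. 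For the product, $X'Y'=(\lambda\mu)^{-1}XY$, and using $(\lambda\mu)^\sigma=\lambda^\sigma\mu^\sigma$ one finds $(X'Y')^\sigma\sim\lambda\mu(XY)^{-1}=(X'Y')^{-1}$. Thus $X',Y'$ satisfy the conjugacy hypotheses of Theorem \ref{forms}.

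It remains to transfer the standing hypotheses. Since $X',Y'$ differ from $X,Y$ by nonzero scalars, a subspace is $X'$-invariant precisely when it is $X$-invariant (and similarly for $Y'$), so $\langle X',Y'\rangle$ and $\langle X,Y\rangle$ have the same invariant subspaces over $\bar L$; hence $\langle X',Y'\rangle$ is absolutely irreducible. For the same reason $C_M(X')=C_M(X)$, $C_M(Y')=C_M(Y)$ and $C_M(X'Y')=C_M(XY)$, so the three summands in \eqref{rigidity} are unchanged and \eqref{rigidity} holds for $X',Y'$. We may therefore apply Theorem \ref{forms} to $\langle X',Y'\rangle$: in case (i) it yields a non-degenerate symmetric or skew-symmetric $m$ with $h'm(h')^T=m$ for all $h'\in\langle X',Y'\rangle$, and in case (ii) a non-degenerate hermitian $m$ with $(h')^\sigma m(h')^T=m$. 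Now each $h\in\langle X,Y\rangle$ can be written as $c\,h'$ for a corresponding word $h'$ in $X',Y'$ and a scalar $c\in L^\ast$; substituting gives $hmh^T=c^2m$ in case (i) and $h^\sigma mh^T=cc^\sigma m$ in case (ii). Hence $\langle X,Y\rangle$ preserves $m$ up to the multipliers $c^2$, resp. $cc^\sigma$, placing it in the conformal orthogonal/symplectic group (i), resp. the conformal unitary group (ii).

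For the additional statement in (ii), suppose $X,Y\in\GL_n(F)$, so $h^\sigma=h$ and the relation above reads $hmh^T=\nu(h)m$ with $\nu(h)=cc^\sigma\in F$. Assuming $\mathrm{char}\,L\neq2$, write $L=F(\theta)$ with $\theta^\sigma=-\theta$ and split $m=\tfrac12(s+a)$ into its symmetric part $s=m+m^T$ and skew part $a=m-m^T$; since $m$ is hermitian we have $m^\sigma=m^T$, whence $s^\sigma=s$ and $a^\sigma=-a$, so that $s$ and $\theta^{-1}a$ both have entries in $F$. Each is preserved by $\langle X,Y\rangle$ up to the same factors $\nu(h)\in F$, and at least one of them is non-zero because $m\neq0$. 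Finally, any non-zero matrix preserved up to scalars by the absolutely irreducible group $\langle X,Y\rangle$ is non-degenerate: its kernel $V$ satisfies $h^T V\subseteq V$ for all $h$, so it is an $\langle X^T,Y^T\rangle$-invariant subspace, and $\langle X^T,Y^T\rangle$ is absolutely irreducible (its enveloping algebra is the transpose of that of $\langle X,Y\rangle$), forcing $V=\{0\}$. The surviving form, viewed over $F$, exhibits $\langle X,Y\rangle$ inside a conformal orthogonal or conformal symplectic group defined over $F$.

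The verifications of the conjugacy relations and of the persistence of absolute irreducibility and \eqref{rigidity} under rescaling are the routine part. The genuine obstacle is the descent in the ``moreover'' clause: one must recognise that an $F$-rational group preserving a hermitian $L$-form up to scalars also preserves the $F$-rational symmetric and skew parts of that form, and then use absolute irreducibility to guarantee that the non-zero survivor is non-degenerate. The characteristic-two case does not admit this symmetric/skew splitting and would require a separate argument.
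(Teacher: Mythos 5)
Your main argument --- rescaling to $X'=\lambda^{-1}X$, $Y'=\mu^{-1}Y$, checking that the conjugacy hypotheses, absolute irreducibility and \eqref{rigidity} survive the rescaling (scalars change neither invariant subspaces nor centralizers), applying Theorem \ref{forms}, and then reinstating the scalars to land in the conformal group --- is exactly the paper's proof, and it is correct; it settles (i) and the first claim of (ii) in every characteristic. The genuine problem is your treatment of the ``moreover'' clause of (ii). There you substitute your own argument: the decomposition $m=\frac12(s+a)$ with $s=m+m^T$, $a=m-m^T$, together with a generator $\theta$ of $L/F$ satisfying $\theta^\sigma=-\theta$. Both devices require $\mathrm{char}\,L\neq 2$, and, as you say yourself, the characteristic-two case ``would require a separate argument.'' That is a real gap, not a side remark: the statement places no restriction on the characteristic, and the excluded case is not vacuous (take $L=\F_4$, $F=\F_2$, $\sigma$ the Frobenius map; unitary groups over fields of even order occur throughout the paper). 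As written, the clause is simply unproved in characteristic $2$.

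The ingredient you are missing is the uniqueness of the invariant form, which the proof of Theorem \ref{forms} already supplies: by a Schur-type argument resting on absolute irreducibility, any two non-zero matrices satisfying \eqref{eq:hmh=m-2} --- equivalently, any two forms fixed by the rescaled group --- are proportional. The paper uses this to descend to $F$ with no symmetric/skew splitting at all. The conditions $XmX^T=\lambda\lambda^\sigma m$, $YmY^T=\mu\mu^\sigma m$ form a system of linear equations in the entries of $m$ whose coefficients lie in $F$ (because $X,Y\in\GL_n(F)$ and $\lambda\lambda^\sigma,\mu\mu^\sigma\in F$), and by uniqueness its solution space is one-dimensional over $L$; hence it admits a non-zero solution $m_1\in \Mat_n(F)$, and $m=\alpha m_1$ for some $\alpha\in L^\ast$. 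Since $m$ is hermitian, $m_1^T=(\alpha^\sigma/\alpha)m_1=\beta m_1$; applying $\sigma$ entrywise to this relation (whose matrices have entries in $F$) gives $\beta=\beta^\sigma$, while $\beta=\alpha^\sigma/\alpha$ gives $\beta^\sigma=\beta^{-1}$, so $\beta^2=1$ and $m_1$ is symmetric or skew-symmetric. This argument is characteristic-free. With the same uniqueness statement in hand you could also repair your own route, but some appeal to it seems unavoidable: the elementary symmetric/skew decomposition alone cannot produce the $F$-rational form when $\mathrm{char}\,L=2$.
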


\begin{proof}
Set $X_1=\lambda^{-1} X$, $Y_1=\mu^{-1} Y$. Then $X_1^\sigma
=(\lambda^{-1})^\sigma X^\sigma \sim \lambda X^{-1} = X_1^{-1}$,
$Y_1^\sigma \sim Y_1^{-1}$, $(X_1Y_1)^\sigma \sim (X_1Y_1)^{-1}$. By
Theorem~\ref{forms}, $\langle X_1, Y_1\rangle$ fixes a non-degenerate (symmetric or
skew-symmetric or, respectively, hermitian) form $m$. Hence
$$
 X^\sigma m X^ T= \lambda\lambda^\sigma m, \qquad
 Y^\sigma m Y^ T = \mu\mu^\sigma m,
$$
i.e., $\langle X, Y\rangle$ is contained in the corresponding
conformal group.

Moreover, the proof of Theorem~\ref{forms} shows that $m$ is unique
up to a scalar multiple. Therefore, if $\sigma$ is an involution and
$X$, $Y\in \mathrm{GL}_n(F)$, then $m=\alpha m_1$, where $m_1\in
\mathrm{GL}_n(F)$. (Since $\lambda\lambda^\sigma$, $\mu\mu^\sigma\in
F$, one can find the entries of $m_1$ as a solution of a system of
linear equations defined over $F$). Since $m$ is hermitian, we have
$m_1^T =(\alpha^\sigma/\alpha) m_1 =\beta m_1$. Applying $\sigma$ to the last
relation, we find $\beta=\beta^{-1}$, i.e., $m_1$ is either symmetric
or skew-symmetric.
\end{proof}

When  $\sigma=\mathrm{id}$, Theorem \ref{forms} does not allow to distinguish
between symmetric and skew-symmetric forms, as both cases may arise. We give some
conditions under which the symmetric case can be detected.

\begin{lemma}\label{negative}
Let $X,Y\in \GL_n(L)$ and suppose that $\left\langle X,Y\right\rangle$ is absolutely irreducible.
Assume further that $d^X_S+d^Y_S=\frac{n(n+1)}{2}$ and that $XY$ is conjugate to its inverse.
Then $\left\langle X,Y\right\rangle$ is contained
in an orthogonal group.
\end{lemma}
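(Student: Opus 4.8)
The plan is to combine Scott's formula for the symmetric square with the two hypotheses and then, crucially, to separate the symmetric (orthogonal) fixed form from the alternating (symplectic) one, the distinction being invisible to the bare dimension counts in characteristic $2$. Throughout set $K=\langle X,Y\rangle$. First I would feed the hypothesis $d_S^X+d_S^Y=\frac{n(n+1)}{2}$ into inequality \eqref{two}. Since the first two terms already exhaust $\dim S=\frac{n(n+1)}{2}$, the inequality collapses to
$$
d_S^{XY}\ \le\ d_S^{K}+\hat d_S^{K}.
$$
Recall from \cite[Lemma 1]{TV} that $d_S^{K}\le 1$ and $\hat d_S^{K}\le 1$, and that $\hat d_S^{K}=1$ already forces $d_S^{K}=1$ and places $K$ inside an orthogonal group. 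Hence it suffices to make the right-hand side large enough to force $\hat d_S^{K}=1$.

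The second step is a lower bound on $d_S^{XY}$ extracted from the assumption that $XY$ is conjugate to its inverse. In every characteristic one has $d_S^{XY}\ge 1$: for $\char L\ne 2$ an element conjugate to its inverse preserves a non-degenerate symmetric bilinear form, while for $\char L=2$ this is already contained in Lemma \ref{char2}. When $\char L\ne 2$ this finishes the argument at once, for then $d_S^{K}=\hat d_S^{K}$, so $d_S^{K}+\hat d_S^{K}\ge 1$ forces both to equal $1$, and $K$ lies in an orthogonal group. When $\char L=2$ the bound $d_S^{XY}\ge 1$ is not enough, and I would instead use the full strength of Lemma \ref{char2}, namely $d_S^{XY}\ge n/2$ (respectively $(n+1)/2$ for $n$ odd); as soon as $n\ge 3$ this gives $d_S^{K}+\hat d_S^{K}\ge 2$, and since each summand is at most $1$ we conclude $\hat d_S^{K}=1$ and again $K$ is orthogonal. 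In particular this covers the $4$-dimensional groups of this paper.

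The main obstacle is precisely this characteristic-$2$ dichotomy. There a non-zero fixed symmetric matrix may well be alternating, so that one could have $d_S^{K}=1$ while $\hat d_S^{K}=0$; in that degenerate scenario $K$ would preserve only an alternating (hence symplectic) form, not an orthogonal one, and the desired conclusion would fail. The whole purpose of Lemma \ref{char2} is to push $d_S^{XY}$ strictly above $1$, thereby excluding $\hat d_S^{K}=0$ and guaranteeing $\hat d_S^{K}=1$. Finally, to land in a genuine orthogonal group I would check, exactly as in the proof of Theorem \ref{forms}, that the fixed symmetric form is non-degenerate: its radical is a $\langle X^T,Y^T\rangle$-invariant subspace, hence trivial by the absolute irreducibility of $\langle X,Y\rangle$.
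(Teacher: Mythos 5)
Your proposal is correct and follows essentially the same route as the paper's own proof: the hypothesis collapses Scott's inequality \eqref{two} to $d_S^{XY}\le d_S^{K}+\hat d_S^{K}$, the conjugacy of $XY$ with $(XY)^{-1}$ gives $d_S^{XY}\ge 1$ (and $\ge 2$ in characteristic $2$ via Lemma \ref{char2}), and Lemma 1 of \cite{TV} then forces $d_S^{K}=\hat d_S^{K}=1$ and places $\langle X,Y\rangle$ in an orthogonal group. Your explicit caveat that the characteristic-$2$ argument needs $n\ge 3$ is a point the paper glosses over (Lemma \ref{char2} only yields $d_S^{XY}\ge 1$ when $n=2$), but it is harmless since the lemma is applied only with $n=4$.
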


\begin{proof}

Setting $K=\langle X,Y\rangle$, relation  \eqref{two} gives $d^{XY}_S\leq d_S^K+\hat d_S^K$.
From the assumption that $XY$ is conjugate to its inverse it follows $1\leq d^{XY}_S$.
Actually, when $p=2$, we have the stronger condition $2\leq d^{XY}_S$ by Lemma \ref{char2}.
Now we make repeated use of
Lemma 1 of \cite{TV} which says, first, that $d_S^K\leq 1$ and $\hat d_S^K \leq 1$.
Moreover it says that, when $p$ is odd, $d_S^K=\hat d_S^K $.
We conclude  $d_S^K=\hat d_S^K =1$. Our claim
follows again from Lemma 1 of \cite{TV}.
\end{proof}

The previous Lemma is a special case of a more general fact (see Corollary \ref{cor to 3.1} below), 
which uses the
following result, essentially proved in \cite[Lemma 3.4]{V}.
\begin{lemma}
 \label{lem:fixform}
Let $g\in \mathrm{GL}_n(L)$ and let
 \begin{equation}
  \label{eq:fixform}
   g^Tmg=m
 \end{equation}
for some non-degenerate matrix $m$ which is either symmetric or skew-symmetric. Let $\mu_g$ be the
minimal polynomial of $g$. Assume that either {\rm (i)} $\deg
\mu_g>2a$ or {\rm(ii)} $\deg \mu_g=2a$ and the middle coefficient of
$\mu_g$ is non-zero. Then $d_S^g\ge a$.
\end{lemma}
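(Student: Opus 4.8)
The plan is to reduce the assertion to a dimension count inside the commutative algebra $A=L[g]$. First I would record the consequences of the hypothesis $g^Tmg=m$. Since $g$ is invertible, $g^{-1}\in A$, and the relation rewrites as $g^T=mg^{-1}m^{-1}$; thus $g^T$, and hence $g$, is conjugate to $g^{-1}$, so $\mu_g$ is self-reciprocal, i.e.\ its multiset of roots over $\bar L$ is stable under $\lambda\mapsto\lambda^{-1}$ with equal multiplicities. Inverting the hypothesis yields $gm^{-1}g^T=m^{-1}$, so $m^{-1}\in V:=\{w\in\Mat_n(L):gwg^T=w\}$, and since every $p(g)$ commutes with $g$ one checks that $W:=L[g]\,m^{-1}\subseteq V$; as $m^{-1}$ is invertible, $\dim W=\dim L[g]=\deg\mu_g$.

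Next I would identify the symmetric part of $W$. Put $\epsilon=+1$ if $m^T=m$ and $\epsilon=-1$ if $m^T=-m$. Using $g^T=mg^{-1}m^{-1}$, a direct computation gives $(p(g)\,m^{-1})^T=\epsilon\,p(g^{-1})\,m^{-1}$, so $W$ is stable under transposition and $p(g)\,m^{-1}$ is symmetric exactly when $\iota(p(g))=\epsilon\,p(g)$, where $\iota$ is the involution of $A$ fixing $L$ and sending $g\mapsto g^{-1}$. Hence $d_S^g\ge\dim(W\cap S)=\dim A^{\epsilon}$, the $\epsilon$-eigenspace of $\iota$, and the whole statement becomes the purely algebraic estimate $\dim A^{\epsilon}\ge a$ attached to the self-reciprocal polynomial $\mu_g$.

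Assume first $\char L\ne2$. To compute $\dim A^{\pm}$ I would evaluate $\tau:=\tr{\iota}$ after extending scalars: decomposing $A\otimes\bar L\cong\prod_{\rho}\bar L[t]/(t-\rho)^{m_\rho}$ over the distinct roots $\rho$ of $\mu_g$, the map $\iota$ permutes the local factors by $\rho\mapsto\rho^{-1}$. Interchanged pairs $\{\rho,\rho^{-1}\}$ with $\rho\ne\rho^{-1}$ contribute $0$ to the trace, while each self-paired factor $\rho=\pm1$ contributes $\sum_{k=0}^{m_\rho-1}(-1)^k$, which is $1$ for $m_\rho$ odd and $0$ for $m_\rho$ even. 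Thus $\tau=\nu_1+\nu_{-1}$, where $\nu_{\pm1}\in\{0,1\}$ is the parity of the multiplicity of $\pm1$ as a root of $\mu_g$, whence $\dim A^{\pm}=\tfrac12(\deg\mu_g\pm\tau)$; note also $\deg\mu_g\equiv\tau\pmod2$, as the non-self-paired roots enter $\deg\mu_g$ in equal pairs.

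Finally I would read off the two cases. If $m$ is symmetric then $\dim A^{+}=\tfrac12(\deg\mu_g+\tau)\ge\tfrac12\deg\mu_g\ge a$ in both (i) and (ii). If $m$ is skew, then $d_S^g\ge\dim A^{-}=\tfrac12(\deg\mu_g-\tau)$: in case (i), $\deg\mu_g-\tau$ is even and at least $(2a+1)-2$, hence at least $2a$, so $\dim A^{-}\ge a$; in case (ii) one needs $\tau=0$, and this is the crux. Here the middle-coefficient hypothesis enters: the constant term of $\mu_g$ equals $\prod_i\rho_i=(-1)^{e_{-1}}$ (as $\deg\mu_g$ is even), so if $-1$ were a root of odd multiplicity then $\mu_g$ would be anti-palindromic and its middle coefficient would vanish; contrapositively, a non-zero middle coefficient forces $e_{-1}$, and by the parity relation $\deg\mu_g\equiv\tau$ also $e_1$, to be even, i.e.\ $\tau=0$ and $\dim A^{-}=a$. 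When $\char L=2$ the eigenspace splitting is unavailable, but there symmetric and skew-symmetric matrices coincide and $(\iota-\id)^2=0$, so the fixed space of $\iota$ has dimension at least $\tfrac12\dim A=\tfrac12\deg\mu_g\ge a$; thus no middle-coefficient hypothesis is needed and the bound follows at once. The main obstacle is exactly the char $\ne2$ skew subcase (ii), namely the link between the middle coefficient and the parities of the multiplicities of $\pm1$.
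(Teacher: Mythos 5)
Your proposal is correct, but it follows a genuinely different route from the paper's proof. The paper argues directly and computationally: it takes the explicit $a$-dimensional space $U$ spanned by $mg,mg^2,\dots,mg^a$ (each of which satisfies $g^Tug=u$), applies the symmetrization $\theta(u)=u+u^T$, and proves that $\theta$ is injective on $U$ by computing $g^am^{-1}\theta\bigl(\sum_{i=1}^a c_i mg^i\bigr)$ as a polynomial in $g$ of degree at most $2a$ whose coefficient of $g^a$ vanishes; hypothesis (i) or (ii) on $\mu_g$ then forces all $c_i=0$, since under (ii) a nonzero such relation would have to be proportional to $\mu_g$, contradicting the nonvanishing middle coefficient. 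This gives $d_S^g\ge\dim\theta(U)=a$ in one stroke, uniformly in all characteristics. You instead work with the full $\deg\mu_g$-dimensional space $W=L[g]\,m^{-1}$ of invariant matrices, identify $W\cap S$ with an eigenspace of the involution $\iota\colon p(g)\mapsto p(g^{-1})$ of $A=L[g]$, and compute that eigenspace by base change to $\bar L$, the local (CRT) decomposition of $A\otimes\bar L$, and a trace computation in terms of the parities of the multiplicities of $\pm1$ as roots of $\mu_g$; the middle-coefficient hypothesis enters through the anti-palindromy forced by an odd multiplicity of $-1$. Both arguments are complete and correct (I checked in particular your trace values $\sum_{k=0}^{m_\rho-1}(-1)^k$ on self-paired factors, the parity relation $\deg\mu_g\equiv\tau\pmod 2$, and the characteristic-$2$ bound via $(\iota-\id)^2=0$). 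What each buys: the paper's argument is shorter, elementary, and characteristic-free; yours yields sharper structural information, namely the exact dimension $\dim(W\cap S)=\tfrac12(\deg\mu_g\pm\tau)$, a conceptual explanation of the role of the middle coefficient, and the observation --- not visible in the paper's proof --- that in characteristic $2$ the middle-coefficient hypothesis in case (ii) is superfluous.
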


\begin{proof}
Define $\theta:\mathrm{Mat}_n(L)\to S$ as follows:
$\theta(u)=u+u^T$. Clearly, if
 \begin{equation}
  \label{eq:fixform2}
  g^Tug=u,
 \end{equation}
then $g^Tu^Tg=u^T$ and $g^T\theta(u)g=\theta(u)$. Notice that for any
$i$ the matrix $u=mg^i$ satisfies (\ref{eq:fixform2}). Let
 $$
 U=\left\{ \sum_{i=1}^a c_i mg^i : c_i \in L\right\}.
 $$
It follows from (\ref{eq:fixform})  that $m^{-1} (g^i)^T m =g^{-i}$.
Set $\lambda=1$ if $m$ is symmetric,  $\lambda=-1$ if $m$ is skew-symmetric.
Consider
  \begin{eqnarray*}
  g^a m^{-1} \theta \left(\sum_{i=1}^a c_i mg^i\right) &=&
   g^a \sum_{i=1}^a c_i g^i  + g^a m^{-1} \sum_{i=1}^a c_i (g^i)^T
   m^T \\
  & =&
   g^a \sum_{i=1}^a c_i g^i   +\lambda g^a  \sum_{i=1}^a c_i m^{-1} (g^i)^T m
    \\
  &=&
    g^a \sum_{i=1}^a c_i g^i   +\lambda g^a  \sum_{i=1}^a c_i g^{-i} \\
  &=&
    \lambda \sum_{i=0}^{a-1} c_{a-i} g^i  +\sum_{i=a+1}^{2a} c_{i-a} g^i .
  \end{eqnarray*}
Clearly, under the assumptions of the Lemma, this sum is zero only if
all $c_i$ vanish. Therefore, the kernel of the restriction of
$\theta$ to $U$ is trivial and $\dim \theta(U)=\dim U=a$. In
particular, $d_S^g\ge a$.
\end{proof}

\begin{corollary}\label{cor to 3.1}
Let $X,Y\in \GL_n(L)$ be as in Theorem \ref{forms} with $\sigma= \id$ and assume further that:
\begin{equation}\label{ass}
d_S^X+ d_S^Y \geq \frac{n^2+n}{2}-\frac{\deg\mu_{XY}}{2}+2.
\end{equation}
Then $\left\langle X,Y\right\rangle$ is contained in an orthogonal group.
\end{corollary}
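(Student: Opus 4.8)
The plan is to show that, under the quantitative hypothesis \eqref{ass}, the invariant symmetric bilinear form detected by Scott's inequality \eqref{two} is non-degenerate, i.e. $\hat d_S^K=1$ for $K=\langle X,Y\rangle$; by Lemma~1 of \cite{TV} this is precisely the statement that $K$ lies in an orthogonal group. Since $X,Y$ satisfy the hypotheses of Theorem~\ref{forms} with $\sigma=\id$, that theorem already guarantees that $K$ fixes a non-degenerate symmetric \emph{or} skew-symmetric form $m$; so the real content of the Corollary is to rule out the purely skew-symmetric alternative by means of the lower bound \eqref{ass}.

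First I would feed \eqref{ass} into \eqref{two}. Writing $K=\langle X,Y\rangle$ and rearranging (recall $\tfrac{n^2+n}{2}=\tfrac{n(n+1)}{2}$), one obtains
\[
 d_S^K+\hat d_S^K\ \ge\ d_S^X+d_S^Y+d_S^{XY}-\tfrac{n(n+1)}{2}\ \ge\ d_S^{XY}-\tfrac{\deg\mu_{XY}}{2}+2 .
\]
Thus everything reduces to bounding $d_S^{XY}$ from below in terms of $\deg\mu_{XY}$. The form $m$ provided by Theorem~\ref{forms}(i) is fixed by every element of $K$; in particular $(XY)^T m\,(XY)=m$ with $m$ non-degenerate and symmetric or skew-symmetric, so Lemma~\ref{lem:fixform} applies to $g=XY$. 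Taking $a$ to be the largest integer with $2a<\deg\mu_{XY}$, case~(i) of that lemma gives $d_S^{XY}\ge a\ge \tfrac{\deg\mu_{XY}}{2}-1$, whence $d_S^K+\hat d_S^K\ge 1$.

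For $\char L\neq 2$ this already suffices: by Lemma~1 of \cite{TV} we have $d_S^K=\hat d_S^K$, so $d_S^K+\hat d_S^K\ge 1$ forces $d_S^K=\hat d_S^K=1$, whence $K$ is orthogonal. The characteristic-two case is the genuine obstacle, because there the identity $d_S^K=\hat d_S^K$ breaks down and one really needs the stronger conclusion $d_S^K+\hat d_S^K\ge 2$ (each summand being at most $1$). The point is that case~(i) of Lemma~\ref{lem:fixform} falls exactly one unit short of $\tfrac{\deg\mu_{XY}}{2}$, so it cannot recover that missing unit by itself. Here I would instead exploit that $XY\sim(XY)^{-1}$: Lemma~\ref{char2} then yields $d_S^{XY}\ge n/2\ge \tfrac{\deg\mu_{XY}}{2}$, using $\deg\mu_{XY}\le n$. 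Substituting this improved bound into the displayed inequality gives $d_S^K+\hat d_S^K\ge 2$, so again $\hat d_S^K=1$ and $K$ is orthogonal.

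This two-characteristic split is exactly the pattern already used to prove Lemma~\ref{negative}, of which the present statement is the quantitative generalization: in odd characteristic one leverages the symmetry $d_S^K=\hat d_S^K$ to conclude from a bound of $1$, while in characteristic two one compensates for its failure through Lemma~\ref{char2}. I expect the only delicate point to be the bookkeeping around the floor/ceiling in Lemma~\ref{lem:fixform}(i) and the verification that the char-two estimate $n/2\ge\tfrac{\deg\mu_{XY}}{2}$ indeed restores the needed strict gain; neither requires the middle-coefficient refinement (ii) of Lemma~\ref{lem:fixform}.
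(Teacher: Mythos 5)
Your proposal is correct and follows essentially the same route as the paper: invoke Theorem \ref{forms}(i) to get the invariant non-degenerate form, apply Lemma \ref{lem:fixform}(i) to $g=XY$ to get $d_S^{XY}\ge \tfrac{1}{2}\deg\mu_{XY}-1$ (upgraded via Lemma \ref{char2} to $d_S^{XY}\ge n/2\ge\tfrac{1}{2}\deg\mu_{XY}$ in characteristic $2$, using $XY\sim (XY)^{-1}$), then combine with \eqref{ass}, \eqref{two} and Lemma 1 of \cite{TV} exactly as in the proof of Lemma \ref{negative}. The only difference is cosmetic bookkeeping (you rearrange \eqref{two} before bounding $d_S^{XY}$, the paper after), so there is nothing further to compare.
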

\begin{proof}
By Theorem \ref{forms}(i), there exists a non-degenerate, symmetric or skew-symmetric,
form $m$  which is preserved by $X$ and $Y$.
Clearly  $\deg \mu_g> 2\left(\frac {\deg\mu_g}{2}-1\right)$.
Thus, applying Lemma \ref{lem:fixform} to $g=XY$, we have $d_S^{XY}\geq \frac{1}{2}\deg\mu_{XY}-1$.
If $L$ has characteristic $2$, we have the stronger inequality 
$d_S^{XY}\geq \frac{n}{2}\geq \frac{1}{2}\deg \mu_{XY}$ 
by Lemma \ref{char2}.
Thus, under assumption \eqref{ass} we get
$$d_S^X+d_S^Y+ d_S^{XY}\geq \frac{n^2+n}{2}+1$$
and, when  $L$ has characteristic $2$
$$d_S^X+d_S^Y+ d_S^{XY}\geq \frac{n^2+n}{2}+2.$$
Our claim follows by the same arguments used in
the proof of Lemma \ref{negative}.
\end{proof}

\section{Canonical forms and shapes of a $(2,k)$-generating pair}\label{shape}

Let $\F$ be an algebraically closed field of characteristic $p\geq 0$.
We fix an integer $k\geq 3$. If $p=0$ or $(p,k)=1$, we
denote by $\epsilon$ a primitive $k$-th root of unity in $\F$.
If $(p,k)=p$ we suppose $k\in \left\{p,2p\right\}$
and, for $k=p\geq 3$, we set $\epsilon=1$,
for $k=2p\geq 4$ we set $\epsilon =-1$.

Next we consider two linear transformations  $\xi,\eta$ of $\F^4$ with
respective similarity invariants:

\smallskip
$\bullet$ $t^2-d,\ t^2-d$,\enskip  $d=\pm 1$,

\smallskip
$\bullet$
$t-1,\ t^3-(1+s)t^2+(1+s)t-1$,\enskip $s=\epsilon +\epsilon^{-1}$.

\smallskip
The projective image of $\xi$ has order 2. The Jordan form of $\eta$ is respectively
\begin{equation}\label{Jordan}
\left(
\begin{array}{cccc}
1&&&\\
&1&&\\
&&\epsilon&\\
&&&\epsilon^{-1}
\end{array}\right), \quad
\left(\begin{array}{cccc}
1&&&\\
&1&0&0\\
&1&1&0\\
&0&1&1
\end{array}\right), \quad
\left(\begin{array}{cccc}
1&&&\\
&1&&\\
&&-1&0\\
&&1&-1
\end{array}\right)
\end{equation}
according as (i) $(k,p)=1$, (ii) $k=p$ or $k=4$ and $p=2$, (iii) $k=2p\geq 6$.
It follows that
$\eta$ and its projective image have order $k$.

We assume further that $\left\langle \xi,\eta \right\rangle$
acts irreducibly on $\F^4$. As the eigenspace $V$ of $\eta$ relative to 1
has dimension 2,  and  $V\cap \xi (V)=0$ by the irreducibility of $\left\langle \xi,\eta \right\rangle$,
we have $V+\xi (V)=\F^4$.
Thus ${\cal B}=\left\{ \xi(v_1), \xi(v_2), v_1, v_2\right\}$ is basis of $\F^4$ whenever
$\left\{v_1,v_2\right\}$ is a basis of $\xi(V)$.
Considering the rational canonical form of the linear transformation
induced by $\eta$ on $\F^4/V$,
we may assume that $v_1$ and $v_2$ are chosen so that
the matrices of  $\xi$ and $\eta$, with respect to ${\cal B}$, have shapes:
\begin{equation}\label{generators}
x=\left(
\begin{array}{cccc}
0&0&1&0\\
0&0&0&1\\
d&0&0&0\\
0&d&0&0
\end{array}\right), \quad
y=\left(
\begin{array}{cccc}
1&0&r_1&r_2\\
0&1&r_3&r_4\\
0&0&0&-1\\
0&0&1&s
\end{array}\right)
\end{equation}
for suitable $r_i\in \F$ with $\left(r_1,r_3\right)\neq \left(r_2,r_4\right)$ 
if $k=p$ or $k=4$ and $p=2$ (i.e. if $s=2$).

For $\delta =\pm \sqrt d$, the corresponding eigenspace of $x$ is:
\begin{equation}\label{eigenspacex}
\left\{(a,b,\delta a, \delta b)^T\mid a,b\in \F \right\}.
\end{equation}

For $\epsilon \neq 1$, the eigenspace of $y$ relative to $\epsilon^j$ ($j=\pm 1$)  is generated by:
\begin{equation}\label{eigenvectory}
u_{\epsilon^{j}}\ =\ (r_1-\epsilon^{j}r_2,\ r_3-\epsilon^{j}r_4,\ \epsilon^{j} -1,\ -\epsilon^{2j}+\epsilon^{j})^T.
\end{equation}
Clearly when $k=2p\geq 6$, i.e., $\epsilon= -1$,  the two vectors coincide.


\begin{lemma}\label{reducible} Let  $x,y$ be defined as in \eqref{generators}.
Then $H=\left\langle x,y\right\rangle$ is a reducible subgroup of $\SL_4(\F)$ if and
only if one the following conditions holds for some $j=\pm 1$, and some $\delta=\pm \sqrt d$:

\smallskip
{\rm (i)}\enskip $r_4=r_1-\epsilon^{j} r_2+\epsilon^{-j} r_3$;

\smallskip
{\rm (ii)}\enskip $\Delta = r_1r_4 - r_2r_3+\delta^{-1}\left((s -1) r_1 - r_2 + r_3- r_4\right) + (2-s)d=0$.
\end{lemma}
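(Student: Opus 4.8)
The plan is to prove both directions at once by explicitly determining all proper nonzero $H$-invariant subspaces $W$; this is legitimate because $\F$ is algebraically closed, so $H$ is reducible if and only if such a $W$ exists, and $W$ must be simultaneously $x$- and $y$-invariant. I would work in the decomposition $\F^4=V\oplus V'$ with $V=\langle e_1,e_2\rangle=\ker(y-I)$ and $V'=xV=\langle e_3,e_4\rangle$, in which
\[
x=\begin{pmatrix}0&I\\ dI&0\end{pmatrix},\qquad
y=\begin{pmatrix}I&R\\ 0&N\end{pmatrix},\quad
R=\begin{pmatrix}r_1&r_2\\ r_3&r_4\end{pmatrix},\quad
N=\begin{pmatrix}0&-1\\ 1&s\end{pmatrix},
\]
so that $\det(N-I)=2-s$. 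When $s\neq2$ the space $P:=\ker(y^2-sy+I)=\langle u_{\epsilon},u_{\epsilon^{-1}}\rangle$ from \eqref{eigenvectory} is the unique $y$-invariant complement of $V$, so every $y$-invariant $W$ splits as $(W\cap V)\oplus(W\cap P)$. I would therefore list the candidates by the pair $(\dim(W\cap V),\dim(W\cap P))$ and impose $x$-invariance on each.

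The heart of the argument is four short computations. For $\dim W=1$ a common eigenvector must be proportional to some $u_{\epsilon^j}$, since no nonzero vector of $V$ lies in an eigenspace $E_\delta$ of \eqref{eigenspacex} (as $V\cap E_\delta=0$ because $\delta\neq0$); requiring $u_{\epsilon^j}\in E_\delta$ forces, in the coordinates of \eqref{eigenvectory}, the relations $\rho_3=\delta\rho_1$ and $\rho_4=\delta\rho_2$, and cross-multiplying eliminates $\delta$ to give $\rho_1\rho_4=\rho_2\rho_3$, which is exactly \textrm{(i)}. For $\dim W=2$ the candidates are $W=V$ (never $x$-invariant, as $xV=V'$), $W=\langle v_0,u_{\epsilon^j}\rangle$ with $0\neq v_0\in V$, and $W=P$; imposing $xW=W$ on $\langle v_0,u_{\epsilon^j}\rangle$ and eliminating $v_0$ again yields precisely \textrm{(i)}, while writing $P$ as the graph of $S:=R(N-I)^{-1}$ turns $x$-invariance into $S^2=dI$, i.e. $\operatorname{tr}S=0$ and $\det S=-d$, and a direct evaluation identifies these with $(s-1)r_1-r_2+r_3-r_4=0$ and $r_1r_4-r_2r_3+(2-s)d=0$, whose conjunction makes the left-hand side of \textrm{(ii)} vanish. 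For $\dim W=3$ the type $(2,1)$ subspace $V\oplus\langle u_{\epsilon^j}\rangle$ is impossible, since $x$-invariance would force $V'\subseteq W$ and hence $W=\F^4$, whereas $\langle v_0\rangle\oplus P$ is $x$-invariant exactly when $x$ has a left eigenvector $\psi=(\delta a,\delta b,a,b)$ lying in the left $1$-eigenspace of $y$; the solvability of that $2\times2$ homogeneous system is controlled by a determinant which, after division by $d=\delta^2$ (so that $\delta/d=\delta^{-1}$ and $(2-s)/d=(2-s)d$), is precisely \textrm{(ii)}. Collecting these cases establishes the equivalence in the generic situation.

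The main obstacle will be the degenerate parameters $\epsilon=\pm1$ (equivalently $s=\pm2$) and the case $p=2$, $d=1$, where the clean picture above fails: for $s=2$ the block $N-I$ is singular and $P$ ceases to be a complement of $V$; for $s=-2$ the vectors $u_{\epsilon}$ and $u_{\epsilon^{-1}}$ coincide; and in characteristic $2$ with $d=1$ the element $x$ is no longer semisimple, so $\delta$ is unique and the eigenspace bookkeeping changes. For these I would replace the eigenspace decomposition by the explicit Jordan forms \eqref{Jordan}, argue with generalized eigenvectors, and invoke the standing hypothesis $(r_1,r_3)\neq(r_2,r_4)$ imposed in \eqref{generators} when $s=2$ to rule out spurious invariant subspaces; the same elimination then shows that reducibility is still detected by \textrm{(i)} and \textrm{(ii)}. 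A secondary, purely computational difficulty is to verify that the scalar relation produced in each case matches the stated forms of \textrm{(i)} and \textrm{(ii)} on the nose, including the correct appearance of $\epsilon^{\pm j}$, of $\delta^{-1}$, and of the constant $(2-s)d$.
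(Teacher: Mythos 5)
Your generic-case argument is correct, and it takes a genuinely different route from the paper's. You classify \emph{all} invariant subspaces through the splitting $W=(W\cap V)\oplus(W\cap P)$ coming from the coprime factorization $(t-1)(t^2-st+1)$ of the minimal polynomial of $y$, and your computations do land where they should: the $(1,1)$ case gives condition (i), the $(0,2)$ case gives the pair of equations ${\rm tr}\,S=0$, $\det S=-d$ whose conjunction implies (ii), and the $(1,2)$ case gives exactly the vanishing of the $2\times 2$ determinant, which after multiplying by $d$ is $\Delta=0$. The paper instead argues by $\dim W$ alone: for $\dim W=1,3$ it uses common right/left eigenvectors much as you do, but for $\dim W=2$ it splits into ``some $0\neq v\in W$ is fixed by $y$'' (direct computation giving (i)) and ``no nonzero vector of $W$ is fixed by $y$'' (then $y$ must induce the identity on $\F^4/W$, and dualizing produces a common left eigenvector, hence (ii)). The paper's organization buys uniformity: it never needs the complement $P$ to exist, so $s=\pm2$ and $p=2$ require no separate treatment. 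Yours buys a cleaner, exhaustive enumeration of the invariant subspaces, but at the cost of breaking down precisely in the degenerate cases.

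Two concrete problems remain. First, a genuine (if small) error: $S^2=dI$ is \emph{not} equivalent to ``${\rm tr}\,S=0$ and $\det S=-d$''; when $p\neq 2$ the scalar matrices $S=\pm\sqrt d\,I$ satisfy $S^2=dI$ but violate both equations. So your ``only if'' direction has a hole in the $(0,2)$ case unless you check the scalar possibility separately. It is harmless in the end: $S=\delta I$ means $R=\delta(N-I)$, i.e.\ $(r_1,r_2,r_3,r_4)=(-\delta,-\delta,\delta,\delta(s-1))$, and then $r_1-\epsilon^j r_2+\epsilon^{-j}r_3=\delta(s-1)=r_4$, so this locus satisfies (i) --- but that verification must be made. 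Second, and more seriously, the case $s=2$ (i.e.\ $k=p$, or $k=4$ with $p=2$) is asserted rather than proved, and it is not a routine adaptation: there $P$, the splitting, and the vectors $u_{\epsilon^{\pm1}}$ all disappear, and the landscape changes qualitatively --- no $1$-dimensional invariant subspace exists at all (consistently with the paper's Case 1, which forces $k\neq p$), while every $2$-dimensional $y$-invariant subspace must contain the line $\im\bigl((y-I)^2\bigr)\subseteq V$, which is nonzero exactly because of the standing hypothesis $(r_1,r_3)\neq(r_2,r_4)$. One can indeed redo the enumeration along these lines and recover (i) and (ii) with $\epsilon^{\pm j}=1$ and $(2-s)d=0$, so your plan is workable; but as written, ``the same elimination then shows\dots'' is a claim, not an argument, and it concerns exactly the situation where your framework, unlike the paper's, has to be rebuilt from scratch.
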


\begin{proof}

If (i) holds, taking $v=\left(-\epsilon^{-j},1,0,0\right)^T$,
we get $yxv= d(r_1-\epsilon^jr_2)v+\epsilon^jxv.$
Thus $\left\langle v, xv\right\rangle$
is a 2-dimensional $H$-module. On the other hand, if (ii) holds, there exists
$(a,b)\neq (0,0)$ such that
$w=(\delta a, \delta b ,a,b)^T$ is fixed by
$y^T$. Since $w$ is an eigenvector of $x^T$, the 1-dimensional space $\left\langle w\right\rangle$ is $H^T$-invariant.
It follows that $H$ has a 3-dimensional submodule.

\smallskip
Viceversa,  let $W$ be a proper $H$-submodule.

\smallskip
{\bf Case 1.} dim $W=1$. In this case $W$ is generated by a common eigenvector $u$ of $x$ and $y$.
From $xW=W$ we get $u\not\in \left\langle e_1,e_2\right\rangle$. Hence
$k\neq p$ if $p$ is odd, $k\ne 4$ if $p=2$ and, up to a scalar, $u=u_{\epsilon^{j}}$ for some $j=\pm 1$.
From $xu=\delta u$ for some $\delta=\pm \sqrt{d}$, we get
$$r_1=\epsilon^{j}r_2+\delta^{-1}(\epsilon^{j} -1),\quad
r_3=\epsilon^{j} r_4+\delta^{-1}(\epsilon^{j}-\epsilon^{2j}).$$
These conditions imply condition (i).

\smallskip
{\bf Case 2.} dim $W=3$.
In this case $H^T$ has a 1-dimensional invariant space.
A generator must have shape $(\delta a, \delta b ,a,b)^T$, in order to be an eigenvector of $x^T$.
And a non-zero vector $w$ of this shape is an eigenvector of $y^T$
only if $y^Tw=w$. This condition gives (ii).

\smallskip
{\bf Case 3.}  dim $W=2$. Assume first that there is a non-zero $v\in W$ such that $yv=v$.
It follows that $v$ and $xv$ are linearly independent, hence generate $W$.
By the shape of $y$, we may assume $v=(\alpha, 1,0,0)^T$ for some $\alpha\in \F$. From
$yxv=\lambda v+\mu xv$ we get $\mu\alpha=-1$, $\mu =\epsilon^j$, and these conditions
easily give that (i) must hold.
Next suppose that $yv\ne v$ for all non-zero $v\in W$.
It follows that $\epsilon \ne 1$ and the characteristic polynomial of the linear transformation
$\eta_0$, say, induced by $y$ on $\F^4/W$ must be $(t-1)^2$. Considering
the minimum polynomial of $y$, we have 
$(\eta_0-I)(\eta_0-\epsilon I)(\eta_0-\epsilon^{-1} I)=0$.
As the second and third factors are invertible, we get $\eta_0=I$.
Thus $y$ must induce the identity on $\F^4/W$. So we get that $H^T$
fixes the  space $U$ of the fixed points of $y^T$. Since $U$ is fixed by $x^T$,
we can choose a non-zero vector $w\in U$ which is an eigenvector
of $x^T$. Hence $\langle w\rangle$ is  $H^T$-invariant,
and condition (ii) must hold.
\end{proof}

The characteristic polynomials of $xy$ and $(xy)^{-1}$ are respectively:

\begin{eqnarray}
   \chi_{xy}(t) &=& t^4-d(r_1+r_4)t^3+
              (r_1r_4-r_2r_3-ds)t^2 +(r_1s-r_2+r_3)t+1 ;
             \label{general car xy}
\\
 \chi_{(xy)^{-1}}(t) &=& t^4+(r_1s-r_2+r_3)t^3+
              (r_1r_4-r_2r_3-ds) t^2 -d(r_1+r_4)t+1.
             \label{general car xy^-1}
\end{eqnarray}

\begin{remark}\label{rigid triple} If $x$,$y$ are as in \eqref{generators},
by a formula of Frobenius \cite [Theorem 3.16, p. 207]{J},
\begin{equation}\label{centralizers}
\dim (C_{\Mat_4(\F)}(x))=8,\quad \dim (C_{\Mat_4(\F)}(y))=6.
\end{equation}
When $H=\left\langle x,y \right\rangle$ is absolutely irreducible, from \eqref{centralizers} and
\eqref{one} we get $\dim (C_{\Mat_4(\F)}(xy))=4$.
In particular equality holds in \eqref{one}, i.e., the triple $(x,y,xy)$ is rigid.
Moreover $xy$ has a unique similarity invariant, equivalently  its minimal and characteristic 
polynomials coincide.
\end{remark}

\section{Field of definition}\label{sec: field definition}
For lack of a reference,  we sketch a proof of the following  well known fact.

\begin{lemma} Let $\Omega$ be the set of coefficients
of the similarity invariants of $h\in \GL_n(\F)$.
If $h^g\in \GL_n(\F_1)$, with $g\in \GL_n(\F)$
and $\F_1\leq \F$, then $\Omega\subseteq \F_1$.
\end{lemma}
\begin{proof}
Call $C$ and $C_1$ the rational canonical forms of
$h$ and $h^g$ respectively in $\GL_n(\F)$ and $\GL_n(\F_1)$.
From $C_1$ conjugate to $C$ in $\GL_n(\F)$, we have $C_1=C$.
As all elements of $\Omega$ appear as entries of $C$, we conclude that $\Omega\subseteq F_1$.
\end{proof}

We denote the centre of $\GL_4(\F)$ by $\F^\ast I$.

\begin{lemma}\label{minimal field} For $x,y$ defined as in \eqref{generators},
let $H=\left\langle x,y\right\rangle$ be conjugate to a subgroup of $\GL_4(\F_1)\F^\ast I$,
for some $\F_1\leq \F$. Then $\F_1\geq \F_p\!\left[s,(r_1+r_4)^2\right]$, where $\F_p$ denotes the
prime subfield.
\end{lemma}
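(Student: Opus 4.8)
The plan is to reduce modulo scalars and to read off the two required elements of $\F_1$ from the similarity invariants of suitable elements of $H$, using the previous lemma. Fix $g\in\GL_4(\F)$ with $H^g\le \GL_4(\F_1)\F^\ast I$. Then for every $h\in H$ we may write $h^g=\lambda_h h_1$ with $\lambda_h\in\F^\ast$ and $h_1\in\GL_4(\F_1)$, the scalar $\lambda_h$ being determined modulo $\F_1^\ast$. I would first record the multiplicative bookkeeping $\lambda_{xy}\equiv\lambda_x\lambda_y\pmod{\F_1^\ast}$, which comes from $(xy)^g=x^gy^g$. The point of the previous lemma is that, applied to the scaled matrix $\lambda_h^{-1}h$ (whose conjugate by $g$ is exactly $h_1\in\GL_4(\F_1)$), it forces all coefficients of the similarity invariants of $\lambda_h^{-1}h$ to lie in $\F_1$.

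Now I would treat $y$ first. By \eqref{generators} the matrix $y$ has the two invariant factors $t-1$ and $t^3-(1+s)t^2+(1+s)t-1$; in particular it has a \emph{linear} invariant factor $t-1$. Scaling by $\lambda_y^{-1}$ turns this factor into $t-\lambda_y^{-1}$ (scaling preserves the degree pattern and divisibility of the invariant factors), so the previous lemma gives $\lambda_y^{-1}\in\F_1$, hence $\lambda_y\in\F_1$ and $y^g\in\GL_4(\F_1)$. Applying the previous lemma directly to $y$ then places the coefficients of its cubic invariant factor in $\F_1$; the coefficient $1+s$ yields $s\in\F_1$. For $x$ I would instead use the relation $x^2=dI$ (immediate from \eqref{generators}): from $(x^g)^2=dI=\lambda_x^2 x_1^2$ we get $x_1^2=d\lambda_x^{-2}I\in\GL_4(\F_1)$, and reading off this scalar gives $d\lambda_x^{-2}\in\F_1$, whence $\lambda_x^2\in\F_1$ (recall $d=\pm1\in\F_p\subseteq\F_1$).

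Finally I would combine these for $xy$. Since $\lambda_y\in\F_1$ we have $\lambda_{xy}\equiv\lambda_x\pmod{\F_1^\ast}$, whence $\lambda_{xy}^2\in\F_1$. The trace is a conjugation invariant, so by \eqref{general car xy} we obtain $d(r_1+r_4)=\tr{xy}=\tr{(xy)^g}=\lambda_{xy}\,\tau$ with $\tau=\tr{(xy)_1}\in\F_1$; squaring and using $d^2=1$ together with $\lambda_{xy}^2\in\F_1$ gives $(r_1+r_4)^2=\lambda_{xy}^2\tau^2\in\F_1$. As $\F_p\subseteq\F_1$ automatically, this proves $\F_1\ge\F_p[s,(r_1+r_4)^2]$. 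The step I expect to be the crux is obtaining $\lambda_y\in\F_1$ exactly: the characteristic polynomial or the trace of $y$ alone would only control $\lambda_y$ up to fourth powers and would break down precisely in the degenerate cases where $\tr{y}=2+s$ vanishes (for instance $p=2$, $s=0$). It is the presence of the low-degree invariant factor $t-1$ of $y$, exploited through the previous lemma, that pins $\lambda_y$ down on the nose and makes the argument uniform.
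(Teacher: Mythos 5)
Your proof is correct and follows essentially the same route as the paper: both pin down the scalar attached to $y$ via the linear invariant factor $t-1$ (scaled to $t-\lambda_y^{-1}$, resp.\ $t-\rho$), extract $s$ from the cubic invariant factor, obtain $\lambda_x^2\in\F_1$ (the paper reads this off the similarity invariants $t^2-d\lambda^2$ of $\lambda x$, while you use $x^2=dI$ directly --- an equivalent computation), and finish by squaring the trace identity $d(r_1+r_4)=\lambda_{xy}\,\tau$ for the product. The only differences are notational (your mod-$\F_1^\ast$ bookkeeping for the scalars versus the paper's fixed choice of $\lambda,\rho$), so there is nothing to fix.
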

\begin{proof}
Assume $H^g\leq \GL_4(\F_1)\F^\ast I$ and write $x^g=x_1\lambda^{-1}$, $y^g=y_1\rho^{-1}$
with $x_1,y_1\in \GL_4(\F_1)$, $\lambda ,\rho\in \F^\ast$.
The similarity invariants of
$\rho y$ are $t-\rho$, $t^3 - \rho(s+1)t^2+ \rho^2(s+1)t -\rho^3$.
As $(\rho y)^g=\rho y^g=y_1$, it follows from the previous Lemma that $\rho$ and $s$ are in $\F_1$.
In the same way, as the similarity invariants of $\lambda x$ are $t^2-d\lambda^2$,
$t^2-d\lambda^2$, $d=\pm 1$; we get $\lambda^2 \in \F_1$. As $x_1y_1=(\lambda x\rho y)^g$
has trace $d\lambda\rho(r_1+r_4)\in \F_1$,
it follows that $(r_1+r_4)^2\in \F_{1}$.
\end{proof}

\begin{lemma}\label{subfields} Let $q=p^a$ be a prime power, with $a>1$. 
Denote by $N$ be the number of non-zero elements $r\in \F_q$ such that
$\F_p\!\left[r^2\right]\ne \F_q$. Then:

$$\left.
\begin{array}{ll}
N\le  2(p-1) & \text{\ if\ } a=2;\\
N\le (p-1,2)\frac{p \left(p^{\left\lfloor a/2\right\rfloor}-1\right)}{p-1}& \text{\ if\ }  a>2.
\end{array}\right .$$
\end{lemma}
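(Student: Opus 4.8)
The plan is to reformulate the defining condition geometrically. Since $\F_p[r^2]$ is the smallest subfield of $\F_q=\F_{p^a}$ containing $r^2$, the equality $\F_p[r^2]=\F_q$ fails exactly when $r^2$ lies in some \emph{proper} subfield. The proper subfields of $\F_{p^a}$ are the $\F_{p^d}$ with $d\mid a$, $d<a$, each contained in a maximal one $\F_{p^{a/\ell}}$ ($\ell$ a prime divisor of $a$); writing $M=\bigcup_{d\mid a,\,d<a}\F_{p^d}$ for their union, we have $\F_p[r^2]\ne\F_q$ if and only if $r^2\in M$. Hence $N=\left|\{r\in\F_q^\ast : r^2\in M\}\right|$, and the whole problem reduces to counting the elements of $\F_q^\ast$ whose square falls in $M$.

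The first step I would carry out is to pass from this set to $M^\ast=M\setminus\{0\}$ by analysing the squaring map $r\mapsto r^2$, which is where the factor $(p-1,2)$ enters. If $p$ is odd, squaring is two-to-one on $\F_q^\ast$, so each value $r^2\in M^\ast$ has at most two preimages and $N\le 2\,|M^\ast|=(p-1,2)\,|M^\ast|$. If $p=2$, squaring is an automorphism of $\F_q$ fixing each subfield: from $r^{2p^d}=r^2\Leftrightarrow (r^{p^d}-r)^2=0\Leftrightarrow r\in\F_{p^d}$ in characteristic $2$, one gets $r^2\in M\Leftrightarrow r\in M$, so $N=|M^\ast|=(p-1,2)\,|M^\ast|$. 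In either characteristic this yields the uniform bound $N\le (p-1,2)\,|M^\ast|$.

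It then remains to estimate $|M^\ast|$, which I would do by cases. For $a=2$ the only proper subfield is $\F_p$, so $|M^\ast|=p-1$ and $N\le (p-1,2)(p-1)\le 2(p-1)$, giving the first inequality. For $a>2$ I would use that every proper divisor $d$ of $a$ satisfies $d\le\lfloor a/2\rfloor$ (the largest proper divisor being $a/q$, with $q$ the least prime factor of $a$). Subadditivity over the proper subfields then gives
$$
|M^\ast|\ \le\ \sum_{d\mid a,\,d<a}\left(p^d-1\right)\ \le\ \sum_{d=1}^{\lfloor a/2\rfloor}\left(p^d-1\right)\ \le\ \frac{p\left(p^{\lfloor a/2\rfloor}-1\right)}{p-1},
$$
since the proper divisors form a subset of $\{1,\dots,\lfloor a/2\rfloor\}$. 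Combining with $N\le (p-1,2)\,|M^\ast|$ produces the second inequality.

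The part requiring the most care is the squaring step: one must treat the two characteristics separately and keep precise track of the $(p-1,2)$ factor, including the observation that in characteristic $2$ membership $r^2\in\F_{p^d}$ is equivalent to $r\in\F_{p^d}$. Once that reduction to $|M^\ast|$ is in place, the remainder is an elementary count of subfield elements, so I do not expect any genuine obstacle beyond bookkeeping.
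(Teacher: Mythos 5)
Your proposal is correct and follows essentially the same route as the paper: both bound $N$ by (number of square roots per element, i.e.\ the factor $(p-1,2)$) times the size of the union of proper subfields, and then estimate that union by summing over divisors $d\le\lfloor a/2\rfloor$ via a geometric series. Your version merely adds detail the paper leaves implicit (the explicit Frobenius argument for $p=2$ and the slightly tighter count $p^d-1$ per subfield), neither of which changes the argument.
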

\begin{proof}

For each  $\alpha\in \F_q^\ast$ such that
$\F_p\!\left[\alpha\right] \ne \F_q$, there are at most $2$
elements $r\in \F_q$ such that $r^2=\alpha$. And, if $p=2$
there is just $1$.
Thus our claim is clear for $a=2$.
For $a>2$, considering the possible orders of subfields of $\F_q$, we have
$$N\le (p-1,2) \left(p+\dots +p^{\left\lfloor a/2\right\rfloor}\right)\le  (p-1,2) \frac{p\left(p^{\lfloor a/2\rfloor}-1\right)}{p-1}.$$

\end{proof}

\section{Negative results}\label{negres}

Let $X,Y$ be elements of $\SL_4(\F)$ whose projective 
images have orders $2$ and $3$.
Then $X^2=i^hI$, for some $h=0,1,2,3$, where $i$
satisfies $i^2+1=0$. Multiplying $Y$ for a scalar, if necessary,
we may suppose that $Y^3=I$. Both $X$ and $Y$ have at least 2 
similarity invariants, whence $d^X_M\geq 8$ and $d^Y_M\geq 6$. Now assume that
$\left\langle X,Y\right\rangle$ is irreducible.
If $X$ or $Y$ has more than 2 similarity invariants, we get 
$d^X_M\ge 10$ or $d^Y_M\ge 10$. From  $d^{XY}_M\ge 4$,
we get a contradiction with respect to \eqref{one}. It follows that $X$ and $Y$ 
have 2 similarity invariants, which necessarily coincide with those
of $x$  and $y$ in \eqref{generators}, with $s=-1$ and some $d=\pm 1$.

\begin{theorem}\label{nr} Let $q=p^a$ be a prime power.

{\rm (i)} $\Sp_4(q)$ is not $(2,3)$-generated.

{\rm (ii)} If $p=2,3$, then $\PSp_4(q)$ is not $(2,3)$-generated.

{\rm (iii)} $\SL_4(2)$ is not $(2,3)$-generated.

{\rm (iv)} $\Sp_4(q)$ is not generated by elements having
the same similarity invariants as $x$ and $y$.
\end{theorem}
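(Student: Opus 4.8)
The plan is to put the generators in the canonical shape \eqref{generators} and then split on the sign $d$, the case $d=1$ falling at once to the symmetric-square inequalities and the case $d=-1$ carrying the real difficulty. Suppose $\langle X,Y\rangle=\Sp_4(q)$ with $X,Y$ of the prescribed similarity invariants. Being absolutely irreducible, the pair can be conjugated into the form \eqref{generators} with $s=-1$ and $d=\pm1$ (Section~\ref{shape}), and by Remark~\ref{rigid triple} the triple $(x,y,xy)$ is rigid with $xy$ having a single similarity invariant. Since every element of a symplectic group is conjugate to its inverse, $xy\sim(xy)^{-1}$, and hence, exactly as in the proof of Lemma~\ref{negative}, $d_S^{xy}\ge1$, with $d_S^{xy}\ge2$ when $p=2$ by Lemma~\ref{char2}.

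Reading off the action on $\Sym^2(\F^4)$ from the rational canonical forms (here $p\ne3$, so $y$ is diagonalisable with eigenvalues $1,1,\epsilon,\epsilon^{-1}$) one finds $d_S^y=4$, while $d_S^x=6$ if $d=1$ and $d_S^x=4$ if $d=-1$. For $d=1$ we have $d_S^x+d_S^y=10=\tfrac{n(n+1)}2$, so Lemma~\ref{negative} applies verbatim and forces $\langle x,y\rangle$ into an orthogonal group. But a subgroup of an orthogonal group cannot be all of $\Sp_4(q)$: in odd characteristic $\Sp_4(q)$ fixes an alternating form and, by the uniqueness of the invariant form of an absolutely irreducible group, no symmetric one; in characteristic~$2$ one has $\O_4^{\pm}(q)\subsetneq\Sp_4(q)$. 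Thus $\langle x,y\rangle\ne\Sp_4(q)$ whenever $d=1$. This already yields statement~(i): a genuine involution satisfies $X^2=I$, i.e.\ $d=1$, so no $(2,3)$-pair can generate $\Sp_4(q)$.

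The hard part is $d=-1$ with $p$ odd. Now $d_S^x+d_S^y=8$, so Scott's inequality \eqref{two} with $K=\Sp_4(q)$ only gives $d_S^{xy}\le2$, consistent with $d_S^{xy}\ge1$; the companion count on the alternating square is likewise tight, so the form-theoretic inequalities no longer detect any obstruction and $\langle x,y\rangle$ is a genuine irreducible symplectic group. To prove it is nevertheless proper I would argue inside Aschbacher's scheme. The guiding observation is that both generators are of symmetric-cube type: the eigenvalues of $x$ and $y$ are precisely $\{\mu^{3},\mu,\mu^{-1},\mu^{-3}\}$ with $\mu=i$ and $\mu=\epsilon$, so $x$ and $y$ are conjugate to the images of elements of orders $4$ and $3$ under the embedding $\Sym^3\colon\SL_2(\F)\hookrightarrow\Sp_4(\F)$. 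I would then try to show that, for every admissible $r_1,\dots,r_4$, the group $\langle x,y\rangle$ lies in a proper maximal subgroup—most plausibly a member of class $\mathcal S$ of $\Sym^3(\SL_2)$ type, or else the normaliser of the extraspecial group in class $\mathcal C_6$. This is the step I expect to be hardest and most delicate, precisely because it is invisible to Scott's formula and must be extracted from the explicit list of maximal subgroups; it is exactly the exceptional behaviour of the value $k=3$ for the symplectic groups detected by Liebeck and Shalev, for which the present method is meant to supply an alternative, structural route.
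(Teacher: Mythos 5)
Your $d=1$ argument is exactly the paper's: from $d_S^x=6$, $d_S^y=4$ and the fact that a symplectic $xy$ is conjugate to its inverse, Lemma~\ref{negative} places $\langle x,y\rangle$ in an orthogonal group, and this settles (i), settles (ii) when $p=2$, and is in substance the proof of (iv) as well (though for (iv) the statement allows arbitrary $k$, so the equality $d_S^y=4$ must be checked for all three Jordan types in \eqref{Jordan}, not only for eigenvalues $1,1,\epsilon,\epsilon^{-1}$ with $\epsilon^3=1$). The serious gap is your treatment of $d=-1$. For this theorem the case $d=-1$ is needed only when $p=3$, and your proposed route --- showing that for $d=-1$ and \emph{every} odd $p$ the group $\langle x,y\rangle$ falls into a proper maximal subgroup of class $\mathcal{S}$ or $\mathcal{C}_6$ --- cannot be carried out, because the statement it would establish is false for $p\ge 5$: by Theorem~\ref{sp4} (and already by \cite{CD}), $\PSp_4(q)$ \emph{is} $(2,3)$-generated whenever $p\ge 5$, and the preimages of such a generating pair have precisely the shape $d=-1$, $s=-1$ and generate all of $\Sp_4(q)$ (they contain $-I=x^2$). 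The Liebeck--Shalev exceptions are confined to $p=2,3$; non-generation is not a feature of $d=-1$ in general, so no Aschbacher-style analysis can succeed uniformly. What actually closes the case $p=3$, $d=-1$ is elementary and form-free: $xy$ is symplectic, hence conjugate to its inverse, so $\tr{xy}=\tr{(xy)^{-1}}$; by \eqref{general car xy} and \eqref{general car xy^-1} with $d=-1$, $s=-1$ this reads $r_4=-2r_1-r_2+r_3=r_1-r_2+r_3$ in characteristic $3$, and since $k=p=3$ forces $\epsilon=1$, this is exactly condition (i) of Lemma~\ref{reducible}: $\langle x,y\rangle$ is reducible, a contradiction. (Note also that your computation of $d_S^y$ via diagonalisability is invalid precisely at $p=3$, where $y$ is unipotent; the value $4$ persists, but it needs its own verification.)

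A second, independent gap: part (iii), the non-$(2,3)$-generation of $\SL_4(2)$, is nowhere addressed in your proposal. The paper's argument is short but cannot be omitted: over $\F_2$ only the quadruples $(r_1,r_2,r_3,r_4)\in\{(1,0,0,0),(0,1,0,1),(0,0,1,1)\}$ make $\langle x,y\rangle$ irreducible, and each satisfies $r_4=r_2+r_3$, so $\chi_{xy}=\chi_{(xy)^{-1}}$; by the rigidity of Remark~\ref{rigid triple} this gives $xy\sim (xy)^{-1}$, and Lemma~\ref{negative} again forces $\langle x,y\rangle$ into an orthogonal group, which is a proper subgroup of $\SL_4(2)$.
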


\begin{proof}
(i) and (ii). Let $X,Y\in \Sp_4(q)$ be preimages of a
$(2,3)$-generating pair of $\PSp_4(q)$.
By the above considerations,
we may assume $X=x$ and $Y=y$ as in \eqref{generators}, with $d=\pm 1$, $s=-1$.

If $d=1$, then  $d^x_S=6$. Moreover $d^y_S=4$. 
Noting that $xy$ is conjugate to its inverse, being a symplectic matrix, 
we have that $\langle x,y \rangle$ is contained in an orthogonal group,
by  Lemma \ref{negative}.
This contradiction proves (i) and also (ii) when $p=2$.

If $d=-1$ and $p=3$, equating $\tr{xy}$ and $\tr{(xy)^{-1}}$
we get $r_4=r_1-r_2+r_3$. As $\epsilon =1$, the group $\left\langle x,y\right\rangle$ is reducible
by Lemma \ref{reducible}(i): a contradiction.

(iii)  Let $X,Y$ be a $(2,3)$ pair in $\SL_4(2)$,
which generates an absolutely irreducible subgroup.
Up to conjugation $X=x$, $Y=y$ as in \eqref{generators}, with $s=-1$.
In all the cases in which $\langle x,y \rangle$ is irreducible, namely
$\left(r_1,r_2,r_3,r_4\right)\in \left\{(1,0,0,0), (0,1,0,1),(0,0,1,1)\right\}$,
we have $r_4=r_2+r_3$. It follows that $xy$ and $(xy)^{-1}$ have the same characteristic polynomial.
Hence, by Remark \ref{rigid triple}, they are conjugate. As above, case $d=1$,
$\langle x,y\rangle$ is contained in an orthogonal group.

(iv) If the claim is false, then $\Sp_4(q)$ could be generated by $x,y$ of shape \eqref{generators}, with $d=1$. 
Again, by  Lemma \ref{negative}, the group $\left\langle x,y\right\rangle$ is contained in an orthogonal group.
A contradiction.
\end{proof}

\bigskip
The first three points of the previous Theorem give a unified  proof of known results.
Indeed it had been shown by Liebeck and Shalev \cite[Proposition 6.2] {LS} that  $\PSp_4(q)$ is not $(2,3)$-generated
for $p=2,3$. And $SL_4(2)\cong \Alt(8)$ is not $(2,3)$ generated by a result of Miller
\cite{ML}.

We recall the presentations of certain groups that will be used. Some of them are well known.

\begin{lemma}\label{pres} Let $G$  be a non trivial group. In {\rm (i)}-{\rm (iv)}
suppose that $G=\langle S,T\rangle$.

{\rm (i)} If $S^2=T^3= (ST)^5=1$ then $G\cong \Alt(5)$;

{\rm (ii)} if $S^2=T^3=(ST)^7=[S,T]^4=1$,  then $G\cong \PSL_2(7)$;

{\rm (iii)} if $S^2=T^7=(TS)^3=(T^4S)^4=1$,  then $G\cong \PSL_2(7)$;


{\rm (iv)} if $S^2=T^4=(ST)^7=
(ST^2)^5=(T(ST)^3)^7
    =T(ST)^3T^2(ST)^3
   S(T(T(ST)^3)^2TST)^2=1$  
   
   then $G\cong \PSL_3(4)$;

{\rm (v)} If $G=\langle T_1,T_2,T_3\rangle$ and
$T_1^2=T_2^2=T_3^2=(T_1T_2)^3=(T_2 T_3)^3=(T_1 T_3)^4=(T_1 T_2 T_3)^5=1$, then $G\cong \Alt(6)$;

\smallskip
{\rm (vi)} If $G=\langle T_1,T_2,T_3,T_4,T_5\rangle$, where $T_1,\ldots,T_5$ satisfy the following conditions

 $$\left\{ \begin{array}{ll}
T_i^3=1, & i=1,\ldots,5, \\
(T_i T_{i+1})^2=1, & i=1,\ldots,4 \\

[ T_i,T_j ]= 1 & |i-j|>2,\\
 T_i T_{i+1}^{-1} T_{i+2} T_i^{-1} T_{i+2}^{-1}=1,& i=1,2,3.
 \end{array}\right. ,$$

then  $G\cong \Alt(7)$.

\end{lemma}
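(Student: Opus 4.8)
The plan is to handle all six items by the single standard recipe for matching a finitely presented group $\Gamma=\langle\,\text{generators}\mid\text{relations}\,\rangle$ with a named finite group $G$: first exhibit an epimorphism $\Gamma\twoheadrightarrow G$, which gives $|\Gamma|\ge|G|$, and then bound $|\Gamma|\le|G|$ from above. The two inequalities together force the epimorphism to be an isomorphism, which is exactly the assertion $G\cong\Gamma$ in each case.

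For the epimorphism I would, in every case, display concrete elements of $G$ to play the role of the abstract generators and then check that these elements both satisfy the listed relations and generate $G$. In (i), (v) and (vi) the elements are permutations: the usual $(2,3,5)$ rotations for $\Alt(5)$, three suitable involutions for $\Alt(6)$, and the $3$-cycles $T_i\mapsto(1,2,i+2)$ of $\Alt(7)$ underlying the Carmichael-type presentation in (vi). In (ii)--(iv) they are projective images of matrices over $\F_7$ (for $\PSL_2(7)$) and over $\F_4$ (for $\PSL_3(4)$); one selects representatives of the prescribed orders, verifies the relators by direct matrix computation modulo scalars, and confirms that the chosen pair generates. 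Each such verification shows that $G$ is a quotient of $\Gamma$.

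For the reverse inequality I would run a coset enumeration (Todd--Coxeter) of a conveniently small subgroup $K\le\Gamma$---typically $K=\langle T\rangle$ in the two-generator cases, or $K=\langle T_1,\dots,T_4\rangle$ in (vi)---and confirm that the number of cosets does not exceed $[G:\bar K]$, where $\bar K$ is the image of $K$ in $G$; combined with $|K|\le|\bar K|$ this yields $|\Gamma|\le|G|$. Cases (i) and (v) are the classical Coxeter presentations of the $(2,3,5)$ von Dyck group and of $\Alt(6)$, while (ii) and (iii) are known two-generator presentations of $\PSL_2(7)$; for all of these the upper bound can be quoted from the literature (for instance Coxeter--Moser) rather than recomputed.

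The main obstacle is the order bound for the two long presentations, namely (iv) for $\PSL_3(4)$ and (vi) for $\Alt(7)$. Their relators are lengthy, and although the coset enumeration is routine in principle, it is not feasible to carry out by hand. Here the only non-elementary ingredient is a machine-verified Todd--Coxeter enumeration in MAGMA or GAP, which closes at the expected index and thereby certifies $|\Gamma|=|G|$, completing the identification.
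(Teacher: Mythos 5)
Your overall recipe (exhibit an epimorphism onto the named group, then cap the order of the presented group by a Todd--Coxeter coset enumeration, machine-assisted where the relators are long) is sound, and it is essentially what the sources cited by the paper actually do: the paper itself disposes of this lemma in one line, quoting Coxeter--Moser and the Atlas for (i)--(iii), Cannon--McKay--Young for (iv), and Vershik--Vsemirnov for (v)--(vi). So as a plan for verifying that each presentation defines the named group, your proposal is workable, including the concession that (iv) and (vi) need a computer.

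There is, however, a genuine gap between what you prove and what the lemma asserts. You identify the finitely presented group $\Gamma$ with the named finite group and call this ``exactly the assertion,'' but the lemma quantifies over an \emph{arbitrary} non-trivial group $G$ whose generators merely satisfy the listed relations; such a $G$ is only a homomorphic image of $\Gamma$, possibly a proper one. This stronger form is precisely how the lemma is used throughout the paper (for instance in Lemma \ref{SL}, Theorem \ref{PSU} and Lemma \ref{cond C6}): one checks the relations on projective images of explicit matrices and immediately concludes that the group they generate is $\Alt(5)$, $\PSL_2(7)$, $\PSL_3(4)$, $\Alt(6)$ or $\Alt(7)$, with no a priori knowledge that this group is all of $\Gamma$. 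The missing step is short but indispensable: after identifying $\Gamma$ with the named group, observe that every target group occurring here is \emph{simple}, so every non-trivial quotient of $\Gamma$ is isomorphic to $\Gamma$ itself; hence the arbitrary non-trivial $G$ of the statement is isomorphic to the named group. Without invoking simplicity (or redoing the order count inside $G$ itself), your argument does not yield the lemma as stated; with that one observation added, it does.
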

\begin{proof}
See \cite{CM} and \cite{A} for (i)-(iii), 
\cite{CMY} for (iv) and  \cite[Theorem ~1]{VeVs} for (v), (vi). 
\end{proof}

\begin{lemma}
$\SU_4(4)$ and $\PSU_4(9)$ are not $(2,3)$-generated.
\end{lemma}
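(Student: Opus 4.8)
The plan is to follow the scheme of Theorem~\ref{nr}: reduce an alleged $(2,3)$-generating pair to the canonical shapes~\eqref{generators}, impose the constraints forced by membership in the unitary group, and show that over the two small fields involved the finitely many surviving pairs never generate the whole group. For $\SU_4(4)$ I would use the quickest route, namely the exceptional isomorphism $\SU_4(4)\cong\PSp_4(3)$ (the centre of $\SU_4$ over $\F_4$ is trivial, as $(4,q+1)=(4,3)=1$); its target is not $(2,3)$-generated by Theorem~\ref{nr}(ii) with $p=3$, and this settles the first group. The rest of the argument concerns $\PSU_4(9)$. Suppose it were $(2,3)$-generated and lift the generators to $\SU_4$ over $\F_9$; since $\PSU_4(9)$ acts absolutely irreducibly, so does $H=\langle X,Y\rangle$, and by the discussion opening Section~\ref{negres} the pair is conjugate to $x,y$ as in~\eqref{generators} with $q=3$, $s=-1$ and $d=\pm1$.

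Next I would impose the unitary constraints. The conditions $x^\sigma\sim x^{-1}$ and $y^\sigma\sim y^{-1}$ of Theorem~\ref{forms}(ii), with $\sigma$ the Frobenius $\alpha\mapsto\alpha^3$ (fixed field $\F_3$), hold automatically, because the similarity invariants of $x$ and $y$ lie over $\F_3$ and both matrices are conjugate to their inverses. By Remark~\ref{rigid triple} the element $xy$ has a single similarity invariant, so the remaining hypothesis $(xy)^\sigma\sim(xy)^{-1}$ is equivalent to equating the $\sigma$-twist of the characteristic polynomial~\eqref{general car xy} with~\eqref{general car xy^-1}. Comparing coefficients yields $r_1r_4-r_2r_3\in\F_3$ together with $-d(r_1+r_4)^\sigma=r_1s-r_2+r_3$, the third relation being the $\sigma$-image of the latter. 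Adjoining absolute irreducibility, that is, the failure of both conditions of Lemma~\ref{reducible}, leaves only finitely many admissible tuples $(r_1,r_2,r_3,r_4)\in\F_9^4$; organising the enumeration by the two values of $d$ keeps it short.

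Finally, for each surviving tuple I would prove that the projective image $\bar H$ of $H$ is a proper subgroup of $\PSU_4(9)$. The relevant maximal subgroups are $\PSL_3(4)$, $\Alt(7)$ and $\Alt(6)$, whose presentations are recorded in Lemma~\ref{pres}(iv),(vi),(v): for each admissible pair I would display suitable generators and verify the corresponding defining relations by direct matrix computation over $\F_9$, thereby identifying $\bar H$ with one of these groups, or else exhibit a common invariant subspace via Lemma~\ref{reducible} to contradict irreducibility. The main obstacle is precisely this case analysis: one must certify that the enumeration is exhaustive and that \emph{every} admissible pair falls into a proper subgroup, which forces one to check the long relations of Lemma~\ref{pres} in each case. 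A uniform dimension argument is not available here, because, unlike in the symplectic situation, $xy$ need not be conjugate to its inverse, so Lemma~\ref{negative} does not apply.
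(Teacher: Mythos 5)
Your overall scheme coincides with the paper's: the same isomorphism $\SU_4(4)\cong\PSp_4(3)$ combined with Theorem~\ref{nr}(ii) disposes of the first group, and for $\PSU_4(9)$ the paper likewise reduces to the shapes \eqref{generators} with $s=-1$, $d=\pm1$, imposes $(xy)^\sigma\sim(xy)^{-1}$ through the characteristic polynomials (your coefficient relations agree with the paper's conditions $r_1^3+r_4^3=d(r_1+r_2-r_3)$ and $r_1r_4-r_2r_3\in\F_3$), adds the irreducibility conditions of Lemma~\ref{reducible}, and finally shows that each surviving tuple generates a proper subgroup by verifying a presentation from Lemma~\ref{pres}. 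The genuine gap is in your last step: you commit to identifying $\bar H$ with $\PSL_3(4)$, $\Alt(7)$ or $\Alt(6)$ via Lemma~\ref{pres}(iv),(vi),(v), but none of the surviving tuples produce these groups. The paper's computation shows that in every case either $(xy)^5$ is scalar, whence $\bar H\cong\Alt(5)$ by Lemma~\ref{pres}(i), or $(xy)^7$ and $[x,y]^4$ are scalar, whence $\bar H\cong\PSL_2(7)$ by Lemma~\ref{pres}(ii). Neither of these groups is on your candidate list, and your fallback (exhibiting an invariant subspace) is unavailable because the tuples were selected precisely to be irreducible; so your verification, as planned, would fail on every tuple. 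The underlying conceptual slip is conflating containment in a maximal subgroup with isomorphism to it: checking a presentation on generators of $\bar H$ can only identify $\bar H$ itself, so the relevant presentations are those of the (non-maximal) groups that actually arise, not those of the class-$\mathcal{S}$ maximal subgroups. The groups $\PSL_3(4)$, $\Alt(6)$, $\Alt(7)$ do occur in this paper, but in the $(2,4)$-analysis of Lemma~\ref{SL} and Theorem~\ref{PSU}, not here.

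A secondary omission: the paper also imposes that not all of $r_1,\dots,r_4$ lie in the prime field, a condition you drop. Without it your enumeration retains tuples with all $r_i\in\F_3$, for which $H\le\SL_4(3)$; these must be excluded by a separate (easy) remark, e.g., $7$ divides $|\SU_4(9)|$ but not $|\SL_4(3)|$, so such an $H$ cannot equal $\SU_4(9)$. The rest of your set-up (the automatic conditions on $x$ and $y$, the use of Remark~\ref{rigid triple} to pass to characteristic polynomials, and the observation that Lemma~\ref{negative} is not applicable in the unitary situation) is correct and matches the paper.
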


\begin{proof}
$\SU_4(4)\cong \PSp_4(3)$ is not $(2,3)$-generated by Theorem \ref{nr}(ii). 
By contradiction, let $X,Y$ be the preimage in $\SU_4(9)$ of a 
$(2,3)$-generating pair of $\PSU_4(9)$. By what observed at the beginning of 
this Section, we may assume that $X,Y$ are as $x,y$ in \eqref{generators}, 
for some $d=\pm 1$ and $s=-1$. Since $(xy)^3$ must be conjugate
to $(xy)^{-1}$, we obtain the conditions $r_1^3+r_4^3=d(r_1+r_2-r_3)$ and $r_1r_4-r_2r_3-2 \in \F_3$.
By Lemma \ref{reducible}, we have also to impose that 
$r_1-r_2+r_3\neq r_4$ and that $r_1r_4-r_2r_3\pm i(r_1-r_2+r_3-r_4)\neq 0$.
Finally, not all the $r_i$'s belong to the prime field. 
We list the possible $4$-tuples satisfying all these conditions, 
denoting by $\xi$ an element of $\F_9$ such that $\xi^2-\xi-1=0$.

\smallskip

{\bf Case $d=1$}. There are 48 such $4$-tuples. Namely:
$$A)\left. \begin{array}{ccc}
\pm (1,\xi,\xi^7,1) & \pm(\xi,1,\xi^7,\xi^6) & \pm(\xi,\xi^3,-1,\xi^6)
\end{array} \right. $$
$$B)\left\{\begin{array}{lllll}
\pm (0,0,\xi,\xi^7) & \pm (0,\xi,0,\xi^3) & 
\pm (\xi^2,\xi^5,\xi,-1) & \pm(\xi^2,\xi^6,\xi^5,\xi) & \pm(\xi^2,\xi,\xi^2,\xi)\\ 
\pm (\xi,0,\xi^6,0) & \pm (\xi,\xi^2,0,0) &  \pm (\xi,\xi^6,\xi^7,-1)& \pm  (\xi,\xi^3,\xi^2,-1)  
\end{array} \right. $$
and their images under the field automorphism $\xi\mapsto \xi^3$.

\smallskip
{\bf Case $d=-1$}. There are $54$ such $4$-tuples. Namely:
$$A)\left. \begin{array}{cccccc}
\pm (0,0,\xi^2,-\xi^2) & \pm(0,\xi^2,0,\xi^2) & \pm(\xi^2,0,0,0) & \pm(\xi,\xi^7,\xi^2,1) & \pm(\xi,\xi^6,\xi^3,1) & \pm (\xi^2,\xi^5,\xi^7,\xi^2)
\end{array} \right. $$
$$B)\left\{ \begin{array}{lllll}
\pm (0,0,\xi,\xi^3) & 
 \pm (\xi,0,1,0)  & \pm(\xi,1,\xi^3,\xi^2)  & \pm(\xi,\xi^7,-1,\xi^2) & \pm (1,-1,\xi,\xi^5)\\ 
\pm (0,\xi,0,\xi^7) & \pm (\xi,-1,0,0) &  \pm (1,\xi,\xi^5,\xi^6)& \pm  (1,\xi^5,1,\xi^5)  
\end{array} \right. $$
and their images under the field automorphism $\xi\mapsto \xi^3$.

In both cases $A)$ the matrix  $(xy)^5$ is scalar, hence $H/Z(H)\cong \Alt(5)$ by Lemma \ref{pres}(i).

In both cases $B)$, both $(xy)^7$ and $[x,y]^4$ are scalar and so $H/Z(H)\cong \PSL_2(7)$ by Lemma \ref{pres}(ii).
\end{proof}

\begin{lemma}\label{SL}
$\SL_4(3)$ and $\SU_4(9)$ are not $(2,4)$-generated.
\end{lemma}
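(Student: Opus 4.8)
The plan is to run, for $k=4$, the same machine used for the $(2,3)$ negative results, but with a substantially wider reduction step. In both groups we have $p=3$ and $(k,p)=1$, so $\epsilon=i$ is a primitive fourth root of unity (lying in $\F_9$) and $s=\epsilon+\epsilon^{-1}=0$. Given a hypothetical $(2,4)$-generating pair $X,Y$ with $\langle X,Y\rangle$ absolutely irreducible, I would first classify the admissible similarity types using Scott's inequality \eqref{one} together with $d^{XY}_M\ge 4$ and $d^X_M\ge 8$. The new feature, absent when $k=3$, is that an element of projective order $4$ need not possess two similarity invariants: if $Y^4=-I$ its eigenvalues are the primitive eighth roots of unity, so $Y$ is regular semisimple with characteristic polynomial $t^4+1$, which already splits into two irreducible quadratics over $\F_3$ and is therefore genuinely realizable in $\SL_4(3)$. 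Thus I expect two regimes: $(\mathrm{A})$ where $Y$ shares the invariants $t-1,\ t^3-t^2+t-1$ of the matrix $y$ in \eqref{generators}, and $(\mathrm{B})$ the exceptional types, chiefly the regular $t^4+1$ case.

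In regime $(\mathrm{A})$ I would conjugate the pair to the shape \eqref{generators} with $s=0$ and $d=\pm1$. Since $y$ is built to have palindromic invariants and $x^2=dI$, both $x\sim x^{-1}$ and $y\sim y^{-1}$ hold automatically, so the hypotheses of Theorem \ref{forms} collapse to a single relation. For $\SL_4(3)$: whenever $\chi_{xy}=\chi_{(xy)^{-1}}$, rigidity (Remark \ref{rigid triple}) gives $xy\sim(xy)^{-1}$ and Theorem \ref{forms}(i) traps $\langle x,y\rangle$ in an orthogonal or symplectic group, already proper. For $\SU_4(9)$ the parallel condition is $\chi_{(xy)^\sigma}=\chi_{(xy)^{-1}}$ with $\sigma$ the Frobenius $\alpha\mapsto\alpha^3$, and any tuple with all $r_i\in\F_3$ is trapped in a symplectic or orthogonal group over $\F_3$ by Corollary \ref{conformal}(ii). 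Imposing these relations together with absolute irreducibility (Lemma \ref{reducible}), the correct order of $xy$, and the field-of-definition bound of Lemma \ref{minimal field} (to forbid a proper subfield) cuts the problem down to finitely many tuples $(r_1,r_2,r_3,r_4)$ over $\F_3$, respectively $\F_9$.

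For each surviving tuple I would evaluate the orders of a handful of short words in $x$ and $y$ — such as $xy$, $xy^2$, $[x,y]$ — and match the resulting relations to one of the presentations collected in Lemma \ref{pres}, thereby identifying $\langle x,y\rangle/Z$ with a proper subgroup such as $\Alt(5)$, $\Alt(6)$, $\PSL_2(7)$ or $\PSL_3(4)$; this contradicts generation of the full group. Regime $(\mathrm{B})$ would be handled in the same spirit: here $X$ is no longer forced to be balanced (the Scott count now only yields $d^X_M\le 10$), so I would first fix its conjugacy type, then set up a canonical form for the pair adapted to the $t^4+1$ element, and again enumerate and identify.

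The hard part will be regime $(\mathrm{B})$ together with the sheer bookkeeping. For $k=3$ the order-$3$ generator is pinned to a single shape, so everything funnels through \eqref{generators}; for $k=4$ the extra regular-semisimple type breaks this funnel, forcing several distinct canonical forms, each with its own enumeration — and over $\F_9$ the unitary case multiplies the number of tuples. This is precisely where the \emph{unexpected amount of details} resides, and where a MAGMA or GAP verification of the residual cases is the practical way to close the argument.
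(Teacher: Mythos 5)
Your proposal founders on a conflation of actual order with projective order. The lemma concerns the matrix groups $\SL_4(3)$ and $\SU_4(9)$ themselves, so a $(2,4)$-generating pair satisfies $X^2=I$ and $Y^4=I$ exactly. This makes your regime $(\mathrm{B})$ vacuous: an element with $Y^4=-I$ and characteristic polynomial $t^4+1$ has order $8$, not $4$ (that analysis is needed only for $\PSU_4(9)$, and the paper isolates it in the separate, much longer Theorem \ref{PSU}). It also invalidates the latitude you allow in regime $(\mathrm{A})$: since $X$ has order $2$, necessarily $d=1$; for $d=-1$ one has $x^2=-I$, an element of order $4$. This is not a harmless over-generalization, because the projective statement you are implicitly attacking is \emph{false} for $\SL_4(3)$: by Theorem \ref{sl4} (with $k=4 \mid q+1$), $\PSL_4(3)$ \emph{is} $(2,4)$-generated, and concretely $d=-1$, $s=0$, $r_1=r_2=r_3=0$, $r_4=1$ in \eqref{generators} gives $\langle x,y\rangle=\SL_4(3)$ with $x^2=-I$, $y^4=I$. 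That tuple is absolutely irreducible, satisfies no palindromic trapping condition, and lies squarely inside your regime $(\mathrm{A})$ with $d=-1$; your enumeration would meet it, and no identification with a proper subgroup — hence no contradiction — is possible. The same happens throughout regime $(\mathrm{B})$ for $\SL_4(3)$, which is exactly how the $(2,4)$-generation of $\PSL_4(3)$ is realized. So the plan, as stated, cannot be completed; the "hard part" you single out is precisely the part that must be deleted, not solved.

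Once the actual orders are imposed, your regime $(\mathrm{A})$ with $d=1$ becomes, in outline, the paper's proof: $X\sim\diag(1,1,-1,-1)$; Scott's inequality \eqref{one} excludes $Y\sim\diag(i,i,-i,-i)$; the remaining order-$4$ classes $i^h\cdot\diag(1,-1,i,i)$ are reduced to the similarity invariants of $y$ (with $s=0$) by the scalar trick $\langle X,Y\rangle=G$ iff $\langle X,i^hY\rangle=G$ ($G$ being perfect) — a normalization your dichotomy glosses over, since for three of the four classes the repeated eigenvalue is not $1$. From there the paper, like you, combines rigidity, Lemma \ref{reducible}, trapping of palindromic cases in orthogonal groups (Lemma \ref{negative}), and presentation-matching via Lemma \ref{pres} (yielding $\Alt(6)$ and $\PSL_3(4)$). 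But note two devices you would still need and do not have: for tuples with $r_1+r_2-r_3+r_4=0$ the paper exhibits an explicit nonzero $J$ with $x^TJx=-J$, $y^TJy=J$, a form preserved only up to sign and therefore invisible to Theorem \ref{forms}; and in the unitary case it derives $r_1r_4-r_2r_3\in\F_3$ and $r_1+r_2-r_3+r_4\in\F_3$ from $(xy)^\sigma\sim(xy)^{-1}$ before enumerating over $\F_9$. Lemma \ref{minimal field} cannot play the role you assign it: it gives only a \emph{necessary} condition for subfield containment, so in a negative result it traps nothing.
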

\begin{proof} Let $G$ be one of the groups $\SL_4(3)$, $\SU_4(9)$ and assume by contradiction that
$X,Y$ is a $(2,4)$-generating pair of $G$. Clearly $X$ must have Jordan form
$\diag(1,1,-1,-1)$ and $Y$ must have Jordan form either
$\diag(i,i,-i,-i)$ or $i^h\cdot \diag(1,-1,i,i)$, for some $h=0,1,2,3$.
The first possibility is excluded by \eqref{one}.
For the remaining possibilities we make the following observations.
If $G=\SL_4(3)$, then $h=1,3$ in order that $\tr{Y} $ lies in $\F_3$.
From $\langle X,-Y\rangle \leq \langle X,Y,-I\rangle \leq G$, with $G$ perfect
we deduce that $\langle X,Y\rangle = G$ iff $\langle X,-Y\rangle = G$.
By similar considerations, when $G=\SU_4(9)$, we get
$\langle X,Y\rangle = G$ iff $\langle X,i^hY\rangle = G$, $h=0,1,2,3$.
So, up to conjugation,  we may suppose $X=x$, $Y=y$ as in \eqref{generators}, with $d=1$, $s=0$.

Assume that $r_1+r_2-r_3+r_4=0$.
Setting $J=\left(\begin{array}{cc}
 J_1&J_2\\
 -J_2&-J_1
 \end{array}\right)$ with
 $$J_1=\left(\begin{array}{cc}
 0&1\\
 -1&0\end{array}\right),\quad J_2=\left(\begin{array}{cc}
 -r_3-r_4&r_3-r_4\\
 r_3-r_4&r_1+r_3-r_4
 \end{array}\right)$$
 we get
 $x^TJx=-J$, $y^TJy=J$. As $J$ is non-zero, we have $\langle x,y\rangle \ne G$.

So, from now on, we suppose that
\begin{equation}\label{assump}
r_1+r_2-r_3+r_4\ne 0.
\end{equation}
By Lemma \ref{reducible}(ii), we must have:
\begin{equation}\label{condition ii}
 \Delta =r_1r_4-r_2r_3\pm (r_1+r_2-r_3+r_4)-1\ne 0.
 \end{equation}
We claim that
$r_1r_4-r_2r_3\in \F_3$ and $(r_1+r_2-r_3+r_4)\in \F_3$
also when $G=\SU_4(9)$. Indeed, in this case,
$(xy)^{-1}$ and $(xy)^\sigma$ have the same
characteristic polynomial. Comparing
\eqref{general car xy} with \eqref{general car xy^-1}
it follows that $r_1r_4-r_2r_3\in \F_3$ and $r_1=(r_2-r_3)^3-r_4$.
Hence also $r_1+r_2-r_3+r_4= (r_2-r_3)^3+(r_2-r_3)\in \F_3$.

\smallskip
{\bf Case} $r_1r_4-r_2r_3\in \{0,-1\}$.
Under assumption \eqref{assump}, we have $\Delta\ne 0$
precisely when $r_1+r_2-r_3+r_4\not\in\F_3$. So this case does not arise.

\smallskip
{\bf Case} $r_1r_4-r_2r_3=1$.
In this case  $\Delta\neq 0$ when $r_1+r_2-r_3+r_4=\rho$ with $\rho=\pm 1$.

The elements $(r_1,r_2,r_3,r_4)$ of $\F_3^4$ for which $\Delta\neq 0$ give rise to products $xy$
whose characteristic polynomial has shape:
$t^4+(r_2-r_3-\rho)t^3+t^2+(-r_2+r_3)t+1$.
If  $r_2-r_3=-\rho$, then
$\chi_{xy}(t)= \chi_{(xy)^{-1}}(t)$. It follows that $xy$ is conjugate to $(xy)^{-1}$,
whence $\langle x,y\rangle$ is contained in an orthogonal group by Lemma \ref{negative}.
If $r_2-r_3=\rho$, we get the solutions $r_1=r_4=0$, $r_2=-r_3=-\rho$.
And, if $r_2-r_3=0$, we get the solutions $r_1=r_4=-\rho$, $r_2=r_3=0$.
In both cases $\langle x,y\rangle$ is reducible by Lemma \ref{reducible}(i).

\smallskip
The elements $(r_1,r_2,r_3,r_4) $of $\F_9^4\setminus \F_3^4$ for which $\Delta\neq 0$
give rise to products $xy$ whose characteristic polynomial has shape:
$t^4+(r_3-r_2)^3t^3+t^2+(r_3-r_2) t+1$ with $(r_3-r_2)\in \left\{\pm 1,\pm \sqrt i, \pm (\sqrt i)^3\right\}$.
If $r_3-r_2=\rho=\pm 1$, then $(xy)^5=\rho I$. Setting $T_1=x$,  $T_2=\rho xy^{-1}xy^2xyx$ and $T_3=y^{-1}xy$, then their respective projective images satisfy the presentation given by Lemma \ref{pres}(v). Since $y=T_3(T_1 T_2T_3)^2 T_1 T_2 T_1$, we have that $H/Z(H)\cong \Alt(6)$.

So, up to  field automorphisms, we are left to consider one quadruple $(r_1,r_2,r_3,r_4)$
for each of the cases $(r_3-r_2)=\sqrt i$ and $(r_3-r_2)=-\sqrt i$, e.g. $(\sqrt i,\sqrt i,1,\sqrt i)$ and $(-\sqrt i,1,\sqrt i, -\sqrt i)$. Direct calculation shows that the projective images $\bar x,\bar y$ of $x,y$,
satisfy the relations of Lemma \ref{pres}(iv), which define $\PSL_3(4)$. Since conjugate rigid triples
generate conjugate subgroups, we have reached a contradiction.
\end{proof}

\begin{theorem}\label{PSU}
$\PSU_4(9)$ is not $(2,4)$-generated.
\end{theorem}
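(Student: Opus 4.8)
The plan is to reduce the projective statement to the linear one already proved in Lemma \ref{SL}, exploiting the fact that the only obstruction to lifting a $(2,4)$-generating pair of $\PSU_4(9)$ to $\SU_4(9)$ is the scalar ambiguity, which I can absorb. Suppose by contradiction that $\bar X,\bar Y$ is a $(2,4)$-generating pair of $\PSU_4(9)$, and let $X,Y\in\SU_4(9)$ be preimages. The center of $\SU_4(9)$ consists of the fourth roots of unity $i^hI$ (since $4\mid(q+1)=4$ for $q=3$), so $\bar X$ of order $2$ forces $X^2=i^hI$ and $\bar Y$ of order $4$ forces $Y^4=i^mI$ for some $h,m$. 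As in the argument opening Section \ref{negres} and in Lemma \ref{SL}, I first pin down the Jordan forms: $X$ must (after multiplying by a suitable central scalar, using that $\SU_4(9)$ is perfect so $\langle X,Y\rangle=\SU_4(9)$ iff $\langle i^aX,i^bY\rangle=\SU_4(9)$) have Jordan form $\diag(1,1,-1,-1)$, i.e.\ $X=x$ with $d=1$, and $Y$ must have Jordan form $i^h\diag(1,-1,i,i)$, which after central adjustment is $y$ as in \eqref{generators} with $s=0$. The possibility that $Y$ has Jordan form $\diag(i,i,-i,-i)$ is excluded by \eqref{one} exactly as in Lemma \ref{SL}.

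Having reduced to $X=x$, $Y=y$ of shape \eqref{generators} with $d=1$, $s=0$, the key observation is that a subgroup $\langle x,y\rangle$ of $\SU_4(9)$ projecting onto $\PSU_4(9)$ would give, after the same normalization, a quadruple $(r_1,r_2,r_3,r_4)$ subject to precisely the constraints analyzed in the proof of Lemma \ref{SL}. The plan is therefore to re-run that case analysis but now only asking that the \emph{projective} image be the whole of $\PSU_4(9)$. The conditions from the unitary structure (Theorem \ref{forms}(ii) with $\sigma$ the Frobenius $\alpha\mapsto\alpha^3$) again force $r_1=(r_2-r_3)^3-r_4$ and $r_1r_4-r_2r_3\in\F_3$, so that $r_3-r_2$ ranges over $\{\pm1,\pm\sqrt i,\pm(\sqrt i)^3\}$, and the characteristic polynomial of $xy$ takes the same shape $t^4+(r_3-r_2)^3t^3+t^2+(r_3-r_2)t+1$ as before.

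Now I dispose of each value of $r_3-r_2$. When $r_3-r_2=\pm1$ the matrix $(xy)^5$ is central, so by Lemma \ref{pres}(v) the projective image $H/Z(H)$ is $\Alt(6)$, not $\PSU_4(9)$; when $r_3-r_2=\pm\sqrt i$ or $\pm(\sqrt i)^3$, the projective images $\bar x,\bar y$ satisfy the presentation of Lemma \ref{pres}(iv), whence $H/Z(H)\cong\PSL_3(4)$. Since by Remark \ref{rigid triple} the triple $(x,y,xy)$ is rigid, conjugate rigid triples generate conjugate subgroups, so it suffices to check one representative quadruple per field-automorphism orbit (as was done in Lemma \ref{SL}). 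In every case the projective image is a proper subgroup ($\Alt(6)$ or $\PSL_3(4)$), contradicting $\langle\bar X,\bar Y\rangle=\PSU_4(9)$. The main obstacle I anticipate is the bookkeeping at the reduction stage: one must verify carefully that the central-scalar adjustments genuinely allow the normalization $d=1$, $s=0$ without losing any case, and that the reducibility exclusions of Lemma \ref{reducible} are compatible with the surviving quadruples; once that is settled, the identification of the proper overgroups via Lemma \ref{pres} is essentially the computation already carried out for $\SU_4(9)$ in Lemma \ref{SL}.
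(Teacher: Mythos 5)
There is a genuine gap, and it is precisely the case that the paper's entire proof of Theorem \ref{PSU} is devoted to. Your reduction assumes that every element of order $4$ in $\PSU_4(9)$ lifts, after multiplication by a central scalar, to an element of $\SU_4(9)$ with Jordan form $i^h\diag(1,-1,i,i)$ or $\diag(i,i,-i,-i)$, i.e.\ to the situation of Lemma \ref{SL}. This is false: $\PSU_4(9)$ has \emph{two} classes of elements of order $4$, and only one of them consists of images of order-$4$ elements of $\SU_4(9)$. The other class consists of images of elements $Y$ of order $8$ with Jordan form $\diag(\xi,\xi^3,\xi^5,\xi^7)$, where $\xi\in\F_9$ is a primitive $8$-th root of unity (satisfying $\xi^2-\xi-1=0$). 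For such a $Y$ one has $Y^4=-I$, which is central, so the projective image has order $4$; but since the centre is $\langle iI\rangle$ with $i^4=1$, every lift $i^aY$ of $\bar Y$ satisfies $(i^aY)^4=Y^4=-I$ and has all eigenvalues equal to primitive $8$-th roots of unity, hence still has order $8$. The scalar ambiguity therefore \emph{cannot} be absorbed: the condition $Y^4=-I$ versus $Y^4=I$ is an invariant of the projective element, and it is exactly what separates the two classes. Your opening claim that ``the only obstruction to lifting is the scalar ambiguity'' is where the argument breaks.

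Concretely, your proposal only re-proves (for the projective setting) the class already disposed of by Lemma \ref{SL}; that part of your argument is fine, since $\SU_4(9)$ is perfect, so a generating pair of $\PSU_4(9)$ whose lifts have those Jordan forms would force the lifts to generate $\SU_4(9)$, contradicting Lemma \ref{SL}. But the second class cannot be handled this way at all: a lift $Y$ with eigenvalues $\xi,\xi^3,\xi^5,\xi^7$ has no eigenvalue $1$, so it cannot be put in the canonical shape \eqref{generators}, and none of the machinery of Sections \ref{shape}--\ref{sec:further} applies to it directly. This is why the paper's proof of Theorem \ref{PSU} works instead with an eigenbasis of $Y$, the Gram matrix of the invariant hermitian form, and the subspace $W=\langle v_1,v_2\rangle$ and its intersection with $XW$, followed by a long case analysis (Cases 1, 2.1--2.5.4) identifying the proper subgroups that arise ($\PSL_3(4)$, $\PSL_2(7)$, $\Alt(6)$, $\Alt(7)$, $2$-groups, soluble groups) via the presentations of Lemma \ref{pres} and explicit invariant forms. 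None of that work is subsumed by, or reducible to, the computation in Lemma \ref{SL}.
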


\begin{proof}
Assume that $(X,Y)$ is a preimage in $\SU_4(9)$ of a
$(2,4)$-generating pair of $\PSU_4(9)$.
Since $\PSU_4(9)$ has one class of involutions
and two classes of elements of order 4, in virtue of Lemma \ref{SL} we are left to
consider the case in which $X$ and $Y$ have respective
Jordan forms $(1,1,-1,-1)$ and $\left(\xi , \xi ^3, \xi^5 , \xi ^7\right)$,
with $\xi^2-\xi-1=0$.
Let $\{v_1,v_2,v_3,v_4\}$ be such that
 $$
   Yv_1=\xi v_1, \quad
   Yv_2=\xi^3 v_2, \quad
   Yv_3=\xi^5 v_3, \quad
   Yv_4=\xi^7 v_4.
 $$
Clearly the  vectors $v_i$ are defined up to non-zero scalar
multiples.
In particular the spaces $W=\left\langle v_1,v_2\right\rangle$ and
$\left\langle v_3,v_4\right\rangle$ must be totally isotropic.
Since $\mathrm{SU}_4(9)$ is absolutely irreducible, we have
$\dim \left(W \cap XW\right) <2$.

\medskip
{\bf Case 1}: $\dim \left( W\cap XW\right)=1$. Therefore, there is a non-zero vector $v\in W$
such that $X v = \mu v$, $\mu=\pm1$.
The Gram matrix of the hermitian form fixed by $Y$
with respect to the basis $\left\{v_1,v_2,v_3,v_4\right\}$ must have shape:
\begin{equation}\label{Gram}
\left(
 \begin{array}{cccc}
    0 & 0 & b_1 & 0 \\
    0 & 0 & 0   & b_2 \\
    b_1^\sigma & 0 & 0 & 0 \\
    0 & b_2^\sigma & 0 & 0
 \end{array}
 \right).
\end{equation}
Again by the
absolute irreducibility of $\mathrm{SU}_4(9)$, we have $\langle
v\rangle \neq \langle v_1\rangle$ and $\langle v\rangle \neq \langle
v_2\rangle$.
Replacing $v_1$ and $v_2$ by appropriate scalar multiples, if necessary,
we can always assume that $v=v_1+v_2$. Moreover, keeping this choice for $v_1$ and $v_2$,
it is possible to replace $v_3$ and $v_4$ by appropriate scalar multiples in order to obtain
$b_1=b_2=1$ in \eqref{Gram}.

Now let us consider the basis $\{v_1+v_2$, $v_1-v_2$, $v_3+v_4$, $v_3-v_4\}$ of $\mathbb{F}_9^4$.
The Gram matrix with respect to this basis remains the same, up to a scalar. Thus we have:
$$
X=\left(
 \begin{array}{cccc}
    a& c_1 & d_1 & f_1 \\
    0 & c_2& d_2   & f_2 \\
    0& c_3 & d_3& f_3 \\
    0 & c_4 & d_4 & f_4
 \end{array}
 \right),\ a=\pm 1\quad J=\left(
 \begin{array}{cccc}
    0 & 0 & 1 & 0 \\
    0 & 0& 0   &1 \\
    1& 0 & 0&0 \\
    0 & 1 & 0&0
 \end{array}
 \right).$$
Imposing $X^TJ=JX^\sigma$ we get $c_3=f_3=0$ and $d_3=a$. $\tr{X}=0$ gives $f_4=-2a-c_2=a-c_2$.
After this substitution, the entry $(4,2)$ of $X^2$ is $ac_4$. Thus $c_4=0$ and
$\left\langle v_1,v_2\right\rangle$ is $\left\langle X,Y\right\rangle$-invariant.
Therefore, case 1 actually does not hold.

\medskip
{\bf Case 2}: $\dim \left( W\cap XW\right)=0$.
Therefore $v_1, v_2, Xv_1, Xv_2$ is a basis for $\F_9^4$.
Interchanging $v_3$ with $v_4$ and replacing  them by appropriate
scalar multiples, if necessary, we may assume either that
$v_3=w+Xv_1$ or that $v_3=w+Xv_1+Xv_2$ for some $w\in W$.
With respect to this basis, $X=x$ and $Y$ has one of the following shapes:
$$y_1=\left(
\begin{array}{cccc}
\xi &0&r_1&r_2\\
0&\xi^3&r_3&r_4\\
0&0&a&b\\
0&0&0&-a-1
\end{array}\right),\quad
y_2=\left(
\begin{array}{cccc}
\xi &0&r_1&r_2\\
0&\xi^3&r_3&r_4\\
0&0&c&1-c-c^2\\
0&0&1&-c-1
\end{array}\right)
$$
where $a^2+a-1=0$, $b=0,1$, $c\in \F_9$.
The Gram matrix of the form with respect to this basis
has now shape $J=\left(
\begin{array}{cc}
0&B\\
B&0
\end{array}\right)$, with $B= \left(\begin{array}{cc}
b_1&c_1\\
c_1^3&b_2
\end{array}\right)$, where $b_1,b_2\in\mathbb{F}_3$.

\smallskip
{\bf Case 2.1:} $Y=y_1$ with $a=\xi^7$, $b=0$.

Then $b_1=b_2=0$ and, multiplying $e_1$ by a suitable constant, we may assume that $c_1=1$.
Then we have that $r_4=-{r_1}^3$, $r_2\in \{0,\xi^3,-\xi^3\}$,
$r_3\in \{0,\xi,-\xi\}$. Let
$$
 K= \left(\begin{array}{cccc}
  0   & 0  & 1   & 0 \\
  0   & 0  & 0   & \pm1 \\
 -1   & 0  & 0   & 0 \\
  0   & \mp1  & 0   & 0
\end{array}\right),
$$
By a direct computation $x^TKx=-K$. Moreover, $y_1^TKy=K$ if and only
if $r_2=\pm\xi^{-2}r_3$. Thus, if $r_2=\pm\xi^{-2}r_3$, then $\langle
x,y_1\rangle \le \mathrm{CSp}_4(9)$. Hence $\langle x,y_1\rangle\neq
\mathrm{SU}_4(9)$. In the remaining cases $(r_2,r_3)$ is one of the
pairs $(0,\xi)$, $(0,-\xi)$, $(\xi^3,0)$, $(-\xi^3,0)$. In
particular, $\langle x,y_1\rangle$ is reducible since  either
$\langle v_1,Xv_1\rangle$ or $\langle v_2,Xv_4\rangle$ is $\langle
x,y_1\rangle$-invariant.

\smallskip
{\bf Case 2.2:} $Y=y_1$ with $a=\xi^7$, $b=1$.

By direct computation we deduce that $b_1=0$ and $c_1^3 =
\xi^2 b_2$. If $b_2=0$, then $c_1=0$ and the form is 0. Since the
form is defined up to a scalar multiple, we can assume that that
$b_2=1$ and $c_1=\xi^6$.

The condition $y_1^T J y_1^\sigma=J$ implies that $r_3=a_1\xi^3$,
$a_1\in \mathbb{F}_3$,
$r_1=-r_4^3-r_3=
-r_4^3-a_1\xi^3$, and
$r_2=-r_4+a_2\xi$, $a_2\in \mathbb{F}_3$.

If, in addition, $r_4=a_3+a_1\xi^2$, where $a_3\in \mathbb{F}_3$,
then taking
$$
K=\left(
 \begin{array}{cccc}
  0             &  1     & -a_1  &  \xi a_1 + a_3 \\
 -1             &  0     & \xi a_1 + a_3  & -\xi^2 a_1 + a_2 - \xi^3a_3 \\
 a_1            &  -\xi a_1 -a_3 &  0&   -1 \\
 -\xi a_1 -a_3 &  \xi^2 a_1 - a_2 + \xi^3 a_3 &  1  & 0
 \end{array}
 \right)
$$
we obtain
$$
  x^T K x =-K, \qquad y_1^T K y_1 = -K.
$$
Therefore, in that case $\langle x, y_1\rangle$ is either reducible
(if $K$ is degenerate) or is contained in $\mathrm{CSp}_4(9)$ (if $K$
is non-degenerate).

Notice that $r_3\neq 0$, otherwise $\langle
(1,0,0,0)^T,(0,0,1,0)^T\rangle$ is $\langle x,y_1\rangle$-invariant.

Altogether, there are 36 possibilities left. The remaining cases are:
\begin{eqnarray*}
  r_3 = a_1 \xi^3, &\qquad& r_4=a_3+a_4\xi^2, \\
  r_1 =-a_3+a_4\xi^2-a_1\xi^3 &\qquad& r_2=-a_3-a_4\xi^2+a_2\xi,
\end{eqnarray*}
where $a_1,a_2,a_3,a_4\in \mathbb{F}_3$, $a_1\neq0$, $a_4\neq a_1$.

In the following analysis, we denoting by   $\bar{x}$,   $\bar{y}_1$ 
the projective images of $x,y_1$.

{\bf Case 2.2.1:} If $(a_1,a_2,a_3,a_4)$ is one of the following tuples
$$\left.
\begin{array}{cccc}
 \pm (1,0,0,0), & \pm(1,1,0,0), & \pm(1,1,-1,0), & \pm(1,-1,-1,0).\\
 \end{array}\right.
$$
then $\langle \bar{x},\bar{y}_1\rangle \cong \mathrm{PSL}_3(4)$
by Lemma \ref{pres}(iv). 

\smallskip
{\bf Case 2.2.2:} Set  $S=\bar{x}$,   $T=(\bar{y}_1\bar x)^2$
if $(a_1,a_2,a_3,a_4)$ is one of the following tuples
$$\left. \begin{array}{cccc}
\pm(1,0,-1,-1),& \pm(1,1,0,-1),& \pm(1,-1,0,-1),& \pm(1,-1,-1,-1).\\
\end{array}\right.$$
Then $\langle S,T\rangle\cong \mathrm{PSL}_2(7)$
by Lemma \ref{pres}(iii). Since $S$ has odd order,
we have $\langle \bar{x},\bar{y}_1\rangle =\langle T,S\rangle$.

\smallskip
{\bf Case 2.2.3:} If $(a_1,a_2,a_3,a_4)=\pm(1,0,1,0)$, then set $g=(x y_1)^4$. In
particular, $g$ is non-scalar and $g^2=1$.  Since $xgx^{-1}=-g$,
$y_1gy_1^{-1}=-g$ and $-1=y_1^4\in \langle x,y_1\rangle$, we have
that $\langle x,y_1\rangle$ possess a non-central normal subgroup of
size 4. Hence, $\langle \bar{x},\bar{y}_1 \rangle\ne
\mathrm{PSU}_4(9)$. 

Actually a MAGMA calculation gives that $\langle x,y_1\rangle$ is a
2-group of size 128.

\smallskip
{\bf Case 2.2.4:} If $(a_1,a_2,a_3,a_4)=\pm(1,0,0,-1)$ or $\pm(1,1,-1,-1)$, take $g_1,\ldots,g_4$ such that:
$$g_1=(xy_1)^4,\quad g_2=x g_1x^{-1},\quad g_3=(y_1 x) g_1 (y_1 x)^{-1},\quad g_4 =(x y_1 x) g_1
(xy_1x)^{-1}.$$
Then,
$$g_1^2=1, \quad [g_i,g_j]=\pm 1\;\; \textrm{ for all } i,j=1,\ldots,4,\quad x g_1 x^{-1} = g_2,\quad$$
$$ x g_2 x^{-1}=g_1,\quad x g_3 x^{-1}=g_4,\quad x g_4 x^{-1}=g_3,\quad y_1 g_1 y_1^{-1} = g_2,\quad y_1 g_2 y_1^{-1} =g_3. $$
Moreover,  $y_1 g_3y_1^{-1} = -g_1g_4$, and $y_1 g_4 y^{-1} =-g_1 g_2$. Therefore, $\langle x,
y_1\rangle$ possesses a normal subgroup of order $2^5$. Hence,
$\langle \bar{x},\bar{y}_1 \rangle\neq \mathrm{PSU}_4(9)$.

\smallskip
{\bf Case 2.2.5:} If $(a_1,a_2,a_3,a_4)=\pm(1,1,1,-1)$, then take $g_1\ldots,g_5$
as in the case 2.2.4. Then, they satisfy the same conditions, with the only differences:
$y_1 g_3y_1^{-1}= g_1g_2g_3$, and $y_1 g_4 y^{-1} =g_3 g_4$. Therefore,
$\langle x, y_1\rangle$ possesses a normal subgroup of order $2^5$.
Hence, $\langle \bar{x},\bar{y}_1 \rangle\neq \mathrm{PSU}_4(9)$.

\smallskip
{\bf Remark.} In Cases 2.2.4 and 2.2.5,  by MAGMA calculations, 
$\langle x,y_1\rangle$ is soluble of order $2^8\cdot 3^2$.

\smallskip

{\bf Case 2.2.6:} In each case listed below we indicate explicitly two elements $u$ and
$g_1\in \langle x,y_1\rangle$ and set $g_2=u g_1 u^{-1}$, $g_3=u^2
g_1 u^{-2}$, $g_4=u^3 g_1 u^{-3}$, $g_5=u^4 g_1 u^{-4}$. A direct
computation shows that in each case the projective images of
$g_1,\dots,g_5$ satisfy the relations of Lemma \ref{pres}(vi).
\begin{center}
\begin{tabular}{ccccc}
$ (a_1,a_2,a_3,a_4)$ & $u$ & $g_1$ & $x$ & $y$ \\ \hline
$\pm(1,0,1,-1)$ & $(x y_1)^2$ & $(x y_1^2)^{-2}$ & $\mp t_1$ & $\xi^2 w_1$ \\
$\pm(1,-1,1,-1)$ & $(x y_1)^2$  & $(x y_1^2)^{-2}$ & $\pm t_1$ & $-w_1$ \\
$\pm(1,1,1,0)$ & $x y_1$ &  $(x y_1 x y_1^{-1})^2$ & $\mp t_2$ & $-w_2$ \\
$\pm(1,-1,1,0)$ & $x y_1$& $(x y_1 x y_1^{-1})^2$ & $\mp t_2$ & $\xi^2w_2$ \\
\end{tabular}
\end{center}
where
$$\left. \begin{array}{ll}
t_1= \xi^2 g_1^2 g_2 g_4 g_3 g_5 g_4 g_3,  & t_2=\xi^2 g_1^2 g_2 g_1 g_3 g_2 g_4 g_3,\\
w_1= g_1^2 g_2 g_1^2 g_4 g_3 g_2 g_1, & w_2=g_1^2 g_2 g_1 g_3 g_2 g_1 g_4 g_3 g_2 g_5.
     \end{array}\right. $$

As $\bar{x},\bar{y}_1 \in \langle \bar{g}_1,\dots,\bar{g}_5\rangle $, this proves that
$\langle \bar{x},\bar{y}_1 \rangle\cong\mathrm{Alt}(7)$.

\medskip

{\bf Case 2.2.7:} The remaining cases are $(a_1,a_2,a_3,a_4)=\pm(1,0,-1,0)$ or
$\pm(1,-1,0,0)$. First, notice that $H_1=\langle x, y_1 x
y_1^{-1}, y_1^2\rangle$ is a subgroup of $\langle x, y_1 \rangle$ of
index at most 2. (e.g., by induction on the length in $x$, $y_1$). Set $g=y_1xy_1^{-1}$.
Clearly, $h=x y_1^{-1} x y^2 x y_1 x\in H_1$. One can check that
$y_1^2 = -\xi^2 (ghx)^3hgx$, so  $\bar{y}_1^2 \in \langle \bar{x},
\bar{g}, \bar{h}\rangle$. Moreover, $\bar{x}$, $\bar{g}$,
$\bar{h}$ satisfy the following presentation for $\Alt(6)$ of Lemma \ref{pres}(v).

We note that in these cases MAGMA returns that $\langle x,y_1\rangle$ modulo the center is $\Alt(6).2$.

\smallskip
{\bf Case 2.3:} $Y=y_1$ with $a=\xi^5$, $b=0$.

Then $c_1=0$, $b_1\in \left\{1,-1\right\}$
and $b_2=\pm b_1$. If $b_2=-b_1$, substituting $v_2$ with $\xi v_2$ we may assume $b_2=b_1$ and, up to a scalar, $B=I$. The following conditions must hold:
$$r_1=-\alpha-i\alpha^3,\quad r_4=i\alpha^3-\alpha ,\quad \alpha\in \F_9; \quad r_3=-r_2^3.$$
We may assume $r_2\ne 0$, otherwise $\left\langle v_1,Xv_1\right\rangle$ is $\left\langle x, y_1\right\rangle$-invariant.
Consider
$$K=\left( \begin{array}{cccc}
0 & 0& 0& 1 \\
0 & 0& \pm 1 & 0\\
0 & \mp 1 & 0 & 0 \\
-1 & 0& 0& 0
\end{array}\right)$$
Then, $K=-K^T$ is non-singular and clearly $x^TKx=\mp K$.
If $r_4=\pm \xi^2 r_1$,  then
$y^T K y=K$. So, in this case, $\left\langle x,y_1\right\rangle$ is
contained in $\CSp_4(9)$.

Now, if $r_4\neq 0$, we take the involution
$$z=\left(\begin{array}{cccc} 0 & 1 & 0 & 0\\ 1 & 0 & 0 & 0 \\ 0 & 0 & 0 & 1 \\ 0 & 0 & 1& 0 \end{array}\right)$$
and consider $zx^\sigma z^{-1}=x$ and $zy^\sigma z^{-1}$ instead of $x$ and $y$.
In such a way we may assume $r_4=0$. Furthermore, conjugating our generators by
the diagonal matrix $\diag(1,r_2,1,r_2)$, we may consider only the following $4$ cases:
$r_1=\pm \xi^3$, $r_2=1$, $r_3=\pm 1$, $r_4=0$.

Now we analyze the group generated by $x$ and
$$
 y_1=\left(\begin{array}{cccc}
  \xi & 0      & r_1   & 1 \\
  0   & \xi^3  & r_3   & 0 \\
  0   & 0      & \xi^5 & 0 \\
  0   & 0      &  0    & \xi^7
\end{array}\right),
$$
where $r_1=\pm\xi^3$, $r_3=\pm 1$. The aim is to show that in all these cases $\langle x, y_1\rangle/Z\cong \Alt(7)$. In each case, proceeding as done in case 2.2.6, we take $g_1,\ldots,g_5$ such that their projective images satisfy the relations of Lemma \ref{pres}(iv).
\begin{center}
\begin{tabular}{cccccc}
$r_1$ & $r_3$ & $u$ & $g_1$ &  $x$ & $y$ \\\hline
$\pm\xi^3$ & $-1$ & $x y$ & $(xyxy^{-1})^2$ & $\mp t_2$  & $w_2$ \\
$\pm \xi^3$ &  $1$ & $(xy)^{-2}$  & $(x y^2)^2$  & $\pm t_1$  & $w_3$  \\
\end{tabular}
\end{center}
where $t_1$, $t_2$ and $w_2$ are as in case $2.2.6$ and
$w_3=\xi^2 g_1 g_2 g_1^2 g_3 g_2 g_1^2 g_5 g_4 g_3 g_2 g_1^2$.
Moreover, the projective images of $x$ and $y$ lie in
$\langle \bar{g}_1,\dots, \bar{g}_5 \rangle\cong \Alt(7)$.

\smallskip
{\bf Case 2.4:} $Y=y_1$ with $a=\xi^5$, $b=1$.

If $b=1$, then $B$ (hence $J$) is degenerate: in particular $b_2=1$,$b_1=c_1=0$ .

\medskip
{\bf Case 2.5:}  $Y=y_2$. Then, a first necessary condition so that $H\leq \SU_4(9)$ is
$c^3+c+1=0$, i.e. $c\in\left\{1,\xi^5,\xi^7\right\}$.

If $c=\xi^5$, then $B$ (hence $J$) is degenerate: in particular $b_1=1$,$b_2=c_1=0$.

If $c=\xi^7$, taking the involution $z$ of case 2.3, we obtain $zx^\sigma z^{-1} =x$ and
$$zy^\sigma z^{-1}=\left(\begin{array}{cccc}
\xi & 0 & r_4^3 & r_3^3\\
0 & \xi^3 & r_2^3 & r_1^3 \\
0 & 0 & \xi^7 & 1\\
0 & 0 & 0 & \xi^5
\end{array}\right).$$
So we may refer to the previous case  2.2.

If $c=1$, then in the Gram matrix $b_1=1$, $b_2=-1$ and $c_1=\xi^2$.
We have the following conditions:
$$\left\{ \begin{array}{l}
(r_1-r_3)^3+\xi^2(r_1-r_3) = 0\\
r_1+r_3-(r_2-r_4)^3=0\\
(r_2+r_4)+\xi^2(r_2+r_4)^3= 0
          \end{array}
\right.$$
Moreover, if $r_1-r_3+\xi^2(r_2+r_4)=0$ or $
r_1-r_4+\xi(r_2+\xi^2r_3)=0$, then $H\leq \CSp_4(9)$. We obtain $36$
possibilities for $(r_1,r_2,r_3,r_4)$.

{\bf Case 2.5.1:} If $(r_1,r_2,r_3,r_4)$ is one of the following
$$\left.\begin{array}{llll}
\pm (0 , \xi , 0 , \xi ) & \pm (0, \xi, \xi^7, \xi^5) &
\pm (\xi, \xi^3, 1,-1) & \pm( 1, 1, \xi, \xi^7) \\
\pm (1,1,\xi^6,\xi^6) & \pm ( \xi^2, \xi^6, -1,1) &
\pm (\xi^3, 0, \xi^7, 0) & \pm (\xi^3, \xi^5, \xi^3, 0)
\end{array}\right.$$

then, denoting by   $\bar{x}$,   $\bar{y}_2$ the projective images of
$x,y_2$ satisfy the presentation of Lemma \ref{pres}(i).
Consequently, $\langle \bar{x},\bar{y}_2\rangle \cong
\mathrm{PSL}_3(4)$.

\medskip
{\bf Case 2.5.2:} If $(r_1,r_2,r_3,r_4)$ is one of the following
$$\pm (1,\xi^3,1,\xi^6)\qquad \pm (\xi^2,-1,\xi^5,1),$$
then, taking $g_1=(xy_2)^4$, $g_2,\ldots,g_5$ as done in case 2.2.5, we obtain that $\langle x,y_2\rangle$ possesses a normal subgroup of order $2^5$.

\medskip
{\bf Case 2.5.3:} In the following cases we take $g_1\ldots,g_5$ as done in case 2.2.6:
\begin{center}
\begin{tabular}{ccccc}
$(r_1,r_2,r_3,r_4)$ &  $u$ & $g_1$ & $x$ & $y$ \\ \hline
$\pm (0,0,\xi^7,\xi)$ & $xy$ & $(xyxy^{-1})^2 $ & $\pm t_2$ & $\xi^2w_2$ \\
$\pm (\xi^7,\xi^5,0,0)$ & $xy$ & $(xyxy^{-1})^2 $ & $\pm t_2$ &  $-\xi^2w_2$  \\
$\pm (1,\xi^3,\xi^6,\xi^7) $ & $(xy)^2$ & $(xy^2)^{-2}$ & $\pm t_1$ & $w_1$\\
$\pm (\xi,\xi^2,\xi,-1)$  &  $(xy)^2$ & $(xy^2)^{-2}$ & $\pm t_1$& $w_1$ \\
\end{tabular}
\end{center}
where $t_1,$ $t_2$, $w_1,$ $w_2$ are as in case $2.2.6$.
The projective image of $\langle x,y_2\rangle$ is thus isomorphic to $\Alt(7)$.

\medskip

{\bf Case 2.5.4:}
Finally, if $(r_1,r_2,r_3,r_4)$ is one of the following
$$\pm (\xi,1,\xi^6,-1) \qquad \pm(1,\xi^2,\xi,\xi^6)\qquad \pm(1,\xi^2,1,\xi^7)\qquad \pm (\xi^2,\xi^7,\xi^2,1), $$
we proceed as done in case 2.2.7, taking $g=-y^3 x y^{-1} (xy)^2$ and $h=-\xi^2y^2$.
Thus, the projective image of $\langle x,y_2\rangle$  isomorphic to $\Alt(6).2$.
\end{proof}

\section{Further assumptions}
\label{sec:further}

From now on we suppose $p>0$, $x,y$ defined as in \eqref{generators}, with:
\begin{equation}\label{assumption}
r_1=r_3=0,\quad r_4\neq 0.
\end{equation}
Under these assumptions, formulas \eqref{general car xy} and \eqref{general car xy^-1} become respectively:
\begin{equation}\label{car xy}
\begin{array}{cccccc}
\chi_{xy}(t)\hfill &=& t^4-dr_4t^3-dst^2-r_2t+1\hfill ; \\
\noalign{\smallskip}
\chi_{(xy)^{-1}}(t)\hfill &=& t^4-r_2t^3-dst^2-dr_4t+1\hfill .
\end{array}
\end{equation}
Moreover we set:
\begin{equation}\label{def: H}
H=\left\langle x,y\right\rangle .
\end{equation}
Lemma \ref{reducible} gives rise to the following:
\begin{corollary}\label{newreducible} $H$ is a reducible over $\F$ if and
only if one the following conditions holds:

${\rm (i)}$ \enskip  $r_2=-\epsilon^{\pm 1} r_4$;

${\rm (ii)}$\enskip $r_2 + r_4=\pm (2- s)\sqrt d$.
\end{corollary}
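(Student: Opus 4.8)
The plan is to apply Lemma~\ref{reducible} directly, substituting the specialized values $r_1=r_3=0$ from \eqref{assumption} into its two reducibility conditions and simplifying. Since Corollary~\ref{newreducible} is stated as a specialization of Lemma~\ref{reducible}, the natural strategy is to take the general criterion ``(i) or (ii) for some $j=\pm1$ and some $\delta=\pm\sqrt d$'' and show that, under $r_1=r_3=0$, condition (i) collapses to $r_2=-\epsilon^{\pm1}r_4$ and condition (ii) collapses to $r_2+r_4=\pm(2-s)\sqrt d$.

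First I would handle condition (i) of Lemma~\ref{reducible}, namely $r_4=r_1-\epsilon^{j}r_2+\epsilon^{-j}r_3$. Setting $r_1=r_3=0$ gives $r_4=-\epsilon^{j}r_2$, hence $r_2=-\epsilon^{-j}r_4$. Letting $j$ range over $\pm1$ reproduces exactly $r_2=-\epsilon^{\pm1}r_4$, which is Corollary~\ref{newreducible}(i). Next I would treat condition (ii), the equation $\Delta = r_1r_4-r_2r_3+\delta^{-1}\left((s-1)r_1-r_2+r_3-r_4\right)+(2-s)d=0$. Substituting $r_1=r_3=0$ removes the $r_1r_4$, $r_2r_3$, $(s-1)r_1$, and $r_3$ terms, leaving $\delta^{-1}(-r_2-r_4)+(2-s)d=0$. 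Multiplying through by $\delta$ and using $\delta^2=d$ so that $\delta d=\delta\cdot\delta^2$ gives $-(r_2+r_4)+(2-s)d\delta=0$, i.e. $r_2+r_4=(2-s)d\delta$. Since $\delta=\pm\sqrt d$ and $d=\pm1$ so that $d\delta=\pm\sqrt d$ as the sign ranges, this simplifies to $r_2+r_4=\pm(2-s)\sqrt d$, which is Corollary~\ref{newreducible}(ii).

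The only mild subtlety, and the step I would be most careful with, is the bookkeeping of signs in the passage $d\delta=\pm\sqrt d$: one must verify that as $\delta$ runs over both square roots $\pm\sqrt d$, the product $d\delta$ also attains both values $\pm\sqrt d$, so that the ``$\pm$'' in the corollary is genuinely an unrestricted choice of sign and matches the ``for some $\delta=\pm\sqrt d$'' quantifier in the lemma. This holds because $d=\pm1$ is a unit, so multiplication by $d$ merely permutes $\{\sqrt d,-\sqrt d\}$ (fixing it when $d=1$, swapping it when $d=-1$); in either case the set of attainable values is $\{\pm\sqrt d\}$. With this check, both equivalences are established and, since Lemma~\ref{reducible} gives reducibility iff (i) or (ii) holds for some admissible $j$ and $\delta$, the corollary follows immediately. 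I expect no genuine obstacle here — the result is a routine substitution — so the emphasis in the writeup should simply be clarity in matching each quantifier of the lemma to the corresponding sign choice in the corollary.
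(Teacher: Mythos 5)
Your proof is correct and is essentially the paper's own argument: the paper gives no separate proof of Corollary~\ref{newreducible}, presenting it as a direct specialization of Lemma~\ref{reducible} under the standing assumption $r_1=r_3=0$, $r_4\neq 0$, which is exactly the substitution you carry out (including the sign bookkeeping $d\delta=\pm\sqrt d$). Nothing is missing.
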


\begin{remark}\label{conjugate} Let $H$ be absolutely irreducible.
By Remark~\ref{rigid triple}, $xy$ has a unique similarity invariant.
It follows that, for any field automorphism $\sigma$, the matrices
$(xy)^\sigma$ and $(xy)^{-1}$ are conjugate if and only if they have
the same characteristic polynomial.
\end{remark}
When dealing with the unitary groups we denote by $\sigma$ 
the Frobenius map $\alpha\mapsto  \alpha^{q}$
of $\F_{q^2}$. 

\begin{theorem}\label{classic}

{\rm (i)} If $H\le \mathrm{CO}_4(q)$ or $H\le \mathrm{CSp}_4(q)$,
then $r_2=\pm dr_4$;

{\rm (ii)} If $H\le \mathrm{CU}_4(q^2)$, then $r_2=\pm d r_4^\sigma$
and $s^\sigma=s$.

{\rm (iii)} In particular,  if $r_2=0$ and $r_4\neq 0$, then $H$ is
not contained in any of the groups $\CSp_4(\F)$, $\CO_4(\F)$,
$\CU_4(\F)$.
\end{theorem}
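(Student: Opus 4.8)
The plan is to use Corollary~\ref{conformal} as the governing tool, applied with the field automorphism $\sigma$ chosen according to which conformal group we are testing. The key observation driving everything is that membership in a conformal group forces $(xy)^\sigma$ to be conjugate to a scalar multiple of $(xy)^{-1}$, and by Remark~\ref{conjugate} this conjugacy is equivalent to equality of characteristic polynomials. So the strategy is: assume $H$ lies in the relevant conformal group, extract the scalar $\lambda\mu(\lambda\mu)^\sigma$ relating $(xy)^\sigma$ to $(xy)^{-1}$, and then compare coefficients of the two characteristic polynomials in \eqref{car xy} (suitably twisted by $\sigma$). Matching coefficients will yield the stated relations between $r_2$, $r_4$, and $s$.

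For part~(i), I would take $\sigma=\mathrm{id}$, so both $\CO_4(q)$ and $\CSp_4(q)$ give $(xy)\sim c\,(xy)^{-1}$ for some scalar $c$. Since the constant terms of both polynomials in \eqref{car xy} equal $1$, comparing the product of roots forces $c^4=1$ and in fact $c=\pm1$ once one tracks the leading behaviour; the cleanest route is to note that $\chi_{xy}$ and $c^4\chi_{(xy)^{-1}}(t/c)$ must agree. Comparing the $t^3$ and $t$ coefficients in \eqref{car xy} then gives the pair of equations $-dr_4 = -c\,r_2$ and $-r_2 = -c^3 d r_4$ (up to the scalar bookkeeping), whose consistency with $c=\pm1$ forces $r_2=\pm d r_4$. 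The $t^2$ coefficient, namely $-ds$, is symmetric and imposes $c^2=1$, which is consistent.

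For part~(ii), I would take $\sigma$ to be the Frobenius map $\alpha\mapsto\alpha^q$, apply Corollary~\ref{conformal}(ii), and compare $\chi_{(xy)^\sigma}$ with a scalar multiple of $\chi_{(xy)^{-1}}$. Here $\chi_{(xy)^\sigma}(t)=t^4-dr_4^\sigma t^3-ds^\sigma t^2-r_2^\sigma t+1$ by Frobenius-twisting \eqref{car xy}. Matching the $t^2$ coefficients immediately gives $s^\sigma=s$, and matching the $t^3$ and $t$ coefficients (again absorbing the unimodular conformal scalar, which satisfies $c\,c^\sigma=1$ and hence is a root of unity) yields $r_2=\pm d r_4^\sigma$. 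The main obstacle I anticipate is the careful bookkeeping of the conformal scalar: I must confirm that the only scalars consistent with both the constant term being fixed and the polynomial having the prescribed palindromic-type symmetry are $\pm1$ in the orthogonal/symplectic case and norm-one elements in the unitary case, so that the sign ambiguity collapses exactly to the stated $\pm$.

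Part~(iii) is then immediate: setting $r_2=0$ while $r_4\neq0$ contradicts each of the relations $r_2=\pm dr_4$ and $r_2=\pm dr_4^\sigma$ derived in (i) and (ii), since $d=\pm1$ is a unit and $r_4^\sigma\neq0$. Hence $H$ cannot lie in $\CSp_4(\F)$, $\CO_4(\F)$, or $\CU_4(\F)$. Throughout I rely on the absolute irreducibility of $H$ and on \eqref{rigidity} holding (guaranteed by Remark~\ref{rigid triple}), which are exactly the hypotheses needed to invoke Corollary~\ref{conformal}.
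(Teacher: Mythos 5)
Your overall strategy (membership in a conformal group forces $(xy)^{\sigma_1}$ to be conjugate to a scalar multiple of $(xy)^{-1}$, then compare characteristic polynomials via \eqref{car xy}) matches the paper's, but there is a genuine gap exactly at the point you flagged as the anticipated obstacle: you cannot pin the conformal scalar down to $\pm 1$ by looking at $xy$ alone. If $(xy)^{\sigma_1}\sim c\,(xy)^{-1}$, then $\chi_{(xy)^{\sigma_1}}(t)=c^4\chi_{(xy)^{-1}}(t/c)$ yields $c^4=1$ from the constant term, while the $t^2$-coefficients give $ds^{\sigma_1}=c^2ds$, which forces $c^2=1$ \emph{only when} $s\neq 0$. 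The case $s=0$ (i.e.\ $k=4$) with $p$ odd is a live case throughout the paper, and there your equations are perfectly consistent with $c$ a primitive fourth root of unity, giving $r_2=c^3dr_4^{\sigma_1}$ with $c^2=-1$, which is not of the stated form $\pm dr_4^{\sigma_1}$. Your fallback claim in the unitary case --- that the similitude factor satisfies $cc^\sigma=1$ --- is false: the multiplier of an element of $\CU_4(q^2)$ lies in the fixed field $\F_q$ (it satisfies $c^\sigma=c$), so $c=\pm i$ is not excluded when $q\equiv 1\pmod 4$. (Part (iii) would survive your weaker conclusion, since $r_2=c^3dr_4^{\sigma_1}$ with $c\neq 0$ still forces $r_4=0$ once $r_2=0$; but (i) and (ii) as stated do not follow from your argument.)

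The paper closes precisely this gap by controlling the multipliers of the \emph{generators} separately rather than of the product. Writing $x^TJx^{\sigma_1}=\lambda J$ and $y^TJy^{\sigma_1}=\mu J$, one gets $\lambda^2=1$ from $x^2=dI$; and splitting $J$ into $2\times2$ blocks $j_1,j_2,j_3,j_4$, the relation for $x$ forces $j_3=d\lambda j_2$ and $j_4=\lambda j_1$, while if $\mu\neq 1$ the relation for $y$ forces $j_1=0$ and makes every row of $j_2$ a left eigenvector of $\left(\begin{smallmatrix}0&-1\\ 1&s^{\sigma_1}\end{smallmatrix}\right)$ for the eigenvalue $\mu$; since that $2\times 2$ matrix has one-dimensional eigenspaces, $j_2$ has rank at most $1$ and $J$ is degenerate, a contradiction. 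Hence $\mu=1$, the similitude factor of $xy$ is exactly $\lambda\mu=\pm1$, and the coefficient comparison then gives $r_2=\pm dr_4^{\sigma_1}$ and $s^{\sigma_1}=s$ in all cases, including $s=0$. Two smaller points: Corollary \ref{conformal} cannot serve as your governing tool, since it is the \emph{converse} implication (conjugacy conditions plus irreducibility and \eqref{rigidity} imply membership in a conformal group); what you actually use is the elementary fact that $g^TJg^{\sigma_1}=cJ$ with $J$ non-degenerate gives $g^{\sigma_1}\sim c\,g^{-1}$. Correspondingly, your reliance on absolute irreducibility and rigidity is misplaced: Theorem \ref{classic} carries no irreducibility hypothesis and its proof needs none, since only the trivial direction ``conjugate implies equal characteristic polynomials'' is required, not Remark \ref{conjugate}.
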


\begin{proof}
Let
 \begin{equation}
 \label{eq:xy-forms}
 x^T J x^{\sigma_1}=\lambda J, \qquad
 y^T J y^{\sigma_1}=\mu J,
 \end{equation}
where $J$ is a nondegenerate  symmetric or skew-symmetric or hermitian
form and $\sigma_1=\mathrm{id}$ or, respectively, $\sigma_1=\sigma$.
Write $$ J=\left(
 \begin{array}{cc}
  j_1 & j_2 \\
  j_3 & j_4
 \end{array}
  \right),
$$
where $j_1$, $j_2$, $j_3$, $j_4$ are $2\times2$ matrices.
Clearly, $\lambda=\pm 1$ and $j_3=d\lambda j_2$, $j_4=\lambda j_1$.
If $\mu\neq 1$, then it follows from the second relation in \eqref{eq:xy-forms} that
$j_1=0$ and
$$
  j_2 \left(
 \begin{array}{cc}
  0 & -1 \\
  1 & s^{\sigma_1}
  \end{array}
  \right) = \mu j_2.
$$
In particular, the rank of $j_2$ is at most 1 and $J$ is degenerate.
Therefore, $\mu=1$. Consequently, $(xy)^T J (xy)^{\sigma_1}=\pm J$ and
$(xy)^{-1}$ is conjugate to $\pm(xy)^{\sigma_1}$. Using \eqref{car xy} we
prove (i) and (ii). The final claim is now obvious.
\end{proof}

If $H$ is absolutely irreducible, the previous Theorem can be
(partially) reverted. Namely, we have the following result.

\begin{theorem}
 \label{classic_positive}
Assume that $H$ is absolutely irreducible. 

{\rm(i)} If
$\F_{q^2}=\F_p\!\left[r_4\right]$, $s\in \F_q$, and $r_2= d r_4^\sigma$, then
$H\le \SU_4(q^2)$.

{\rm(ii)} If $\F_{q}=\F_p\!\left[s, r_4\right]$, $r_2=r_4$ and $d=1$,
then $H\le \SO^{\pm}_4(q)$.

{\rm(iii)} If $p\neq 2$, $\F_{q}=\F_p\!\left[s, r_4\right]$, $r_2=r_4$ and $d=-1$,
then $H\le \mathrm{CSO}^{\pm}_4(q)$, $H\not\le \SO^{\pm}_4(q)$.

{\rm(iv)} If $\F_{q}=\F_p\!\left[s, r_4\right]$, $r_2=-r_4$ and $d=1$, then  $H\le
\CSp_4(q)$ and, if $p\neq 2$, $H\not\le \Sp_4(q)$.

{\rm(v)} If $\F_{q}=\F_p\!\left[s, r_4\right]$, $r_2=-r_4$ and $d=-1$, then  $H\le
\Sp_4(q)$.
\end{theorem}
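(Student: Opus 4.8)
The common hypothesis is that $H$ is absolutely irreducible, so Remark~\ref{rigid triple} applies: the triple $(x,y,xy)$ is rigid, \eqref{rigidity} holds, and $xy$ has a unique similarity invariant (its minimal and characteristic polynomials coincide). This last fact is what makes the analysis tractable, since the same is then true of $(xy)^{-1}$, of $(xy)^{\sigma}$ and of any scalar multiple $c\,(xy)^{-1}$; hence two such matrices are conjugate precisely when their characteristic polynomials agree, which by \eqref{car xy} is a matter of comparing four coefficients. The plan is to (a) verify the relevant ``conjugate to a scalar- and $\sigma$-twisted inverse'' relations for the three factors $x$, $y$, $xy$; (b) feed them into Theorem~\ref{forms} (for the exact relations) or Corollary~\ref{conformal} (for the conformal ones) to produce an invariant form; and (c) identify the type of that form and its similitude behaviour, which is exactly what separates $\SU$, $\SO$/$\mathrm{CSO}$ and $\Sp$/$\CSp$.

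For step (a) I would record three uniform facts. Since $x^2=dI$, the eigenvalue multiset of $x$ is invariant under negation, whence $x\sim x^{-1}$ and, when required, $x\sim -x^{-1}$. Because $\chi_{y}(t)=(t-1)^2(t^2-st+1)$ is palindromic, $y\sim y^{-1}$, and $y^{\sigma}\sim y^{-1}$ as soon as $s^{\sigma}=s$. Finally, comparing $\chi_{xy}$ with the characteristic polynomial of $c\,(xy)^{-1}$ (respectively of $(xy)^{\sigma}$) through \eqref{car xy} shows that $xy\sim c\,(xy)^{-1}$ holds with $c=1$ exactly when $r_2=dr_4$ and with $c=-1$ exactly when $r_2=-dr_4$; in the unitary case $(xy)^{\sigma}\sim (xy)^{-1}$ holds exactly when $r_2=dr_4^{\sigma}$ and $s^{\sigma}=s$ (the coefficient of $t$ matching after applying $\sigma$). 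Matching against the hypotheses sorts the cases: (i) is the exact hermitian situation; (ii) and (v) are the exact case $r_2=dr_4$ with $c=1$; (iii) and (iv) are the conformal case $r_2=-dr_4$ with $c=-1$.

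For (i), all three exact $\sigma$-twisted conjugacies hold, so Theorem~\ref{forms}(ii) yields an invariant non-degenerate hermitian form. As $r_4\in\F_{q^2}$, $s\in\F_q$ and $r_2=dr_4^{\sigma}\in\F_{q^2}$, the entries of $x,y$ lie in $\F_{q^2}$, and since the form is unique up to a scalar it can be solved for over $\F_{q^2}$; thus $H\le\GU_4(q^2)$, and $\det x=\det y=1$ gives $H\le\SU_4(q^2)$. For (ii) and (v), Theorem~\ref{forms}(i) produces a form that is \emph{exactly} preserved (symmetric or skew-symmetric); here $r_2=\pm r_4$ and $s,r_4\in\F_q$ keep all entries, hence the form, over $\F_q$, so $H$ will land in $\SO_4^{\pm}(q)$ or in $\Sp_4(q)$ once the type is known. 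For (iii) and (iv), Corollary~\ref{conformal}(i) gives a form preserved only up to scalars, i.e. $H\le\CO_4(q)$ or $H\le\CSp_4(q)$.

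The crux is step (c): neither Theorem~\ref{forms}(i) nor Corollary~\ref{conformal}(i) distinguishes the symmetric from the skew-symmetric alternative, nor do they directly deliver the similitude factor separating $\Sp$ from $\CSp$ and $\SO$ from $\mathrm{CSO}$. Both difficulties dissolve via uniqueness: the proof of Theorem~\ref{forms} shows the invariant form is unique up to a scalar, so it is enough to produce one non-degenerate invariant form explicitly and read off whether it is symmetric or skew. Concretely I would write the Gram matrix $J$ in $2\times2$ blocks and solve $x^{T}Jx=\alpha J$, $y^{T}Jy=\beta J$; the solution space is one-dimensional, its symmetry settles the type, and the scalars $\alpha,\beta$ settle the similitude. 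For (v) the form is already exactly invariant, and the skew-symmetric solution identifies it as skew, so $H\le\Sp_4(q)$; for (iv) the conformal skew-symmetric solution has a nontrivial similitude ($\neq 1$), whence $H\le\CSp_4(q)$ but $H\not\le\Sp_4(q)$, which is where $p\neq2$ enters. Case (iii) is the orthogonal mirror: for $d=-1$ the involution $X_1=\lambda^{-1}x$ with $\lambda^2=-1$ has eigenvalues $1,1,-1,-1$, so $d_S^{X_1}+d_S^{y}$ reaches $\frac{n(n+1)}{2}$ and $X_1y\sim(X_1y)^{-1}$; Lemma~\ref{negative} then furnishes a symmetric invariant form, and since $x=\lambda X_1$ rescales it by $\lambda^2=-1$ the similitude is nontrivial, giving $H\le\mathrm{CSO}_4^{\pm}(q)$ with $H\not\le\SO_4^{\pm}(q)$. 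The same fixed-point count, applied directly to $x,y$ (where $x$ is now an involution), is the quickest route to the symmetric alternative in case (ii). The single genuinely delicate point is thus the explicit exhibition of one Gram matrix of the correct symmetry in each of (iii)--(v); once that is in hand, rigidity and uniqueness do the rest.
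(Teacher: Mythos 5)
Your overall architecture is the paper's own: verify the conjugacy relations through characteristic polynomials and Remark~\ref{rigid triple}, feed them into Theorem~\ref{forms}, Corollary~\ref{conformal} and Lemma~\ref{negative}, and settle the symmetric/skew alternative by an explicit Gram matrix. Parts (i) and (ii) of your argument coincide with the paper's proof, and your handling of (iii) --- rescaling $x$ to the involution $X_1=\lambda^{-1}x$ with $\lambda^2=-1$, so that $d_S^{X_1}+d_S^{y}=6+4=10$ and Lemma~\ref{negative} applies to $\langle X_1,y\rangle$ --- is a genuine (and pleasant) alternative to what the paper does, modulo one caveat: when $q\equiv 3\pmod 4$ the scalar $\lambda$ lies outside $\F_q$, so the symmetric form you obtain lives a priori over $\F_{q^2}$ and must be descended to $\F_q$ by the linear-system argument used in the proof of Corollary~\ref{conformal}(ii). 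The genuine gap is in (iv) and (v). There the entire content of the theorem is that the semi-invariant form is \emph{skew}-symmetric when $r_2=-r_4$, and this is precisely the step you defer (``the single genuinely delicate point is the explicit exhibition of one Gram matrix of the correct symmetry''). Nothing in rigidity, Theorem~\ref{forms} or Corollary~\ref{conformal} decides between the two symmetry types --- the paper says so explicitly --- and a priori the answer could have come out symmetric, in which case (v) would be false. So the proposal reduces the theorem to a computation it never performs. The paper performs it: for $r_2=\lambda r_4$ it exhibits the block matrix $J$ (with $i_1=0$, $i_2=2-s$ when $\lambda=-1$), checks $x^TJx=\lambda d J$ and $y^TJy=J$ directly, observes that $J$ is skew-symmetric exactly when $\lambda=-1$ and symmetric when $\lambda=1$, and proves non-degeneracy, $\det J=(s-2)^4\neq 0$, resp.\ $\det J=((s-2)^2-4dr_4^2)^3/(s+2)^2\neq 0$, using absolute irreducibility and Corollary~\ref{newreducible}(ii). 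Until you produce such a $J$ (or an equivalent computation), cases (iv)--(v) are a plan, not a proof.

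There is also a flaw in your justification of the negative halves, $H\not\le\SO^{\pm}_4(q)$ in (iii) and $H\not\le\Sp_4(q)$ in (iv). You argue that the invariant form is unique up to scalar and that your form has similitude factor $-1$ on $x$, hence no form can be exactly invariant. But the uniqueness established in the proof of Theorem~\ref{forms} is uniqueness among forms with a \emph{fixed} system of multipliers; it does not exclude a second non-degenerate form with a different multiplier character, and such coexistence really occurs: the absolutely irreducible group $Q_8<\SL_2(\C)$ fixes the symplectic form exactly while scaling the symmetric form $\diag(1,1)$ by $\pm 1$. So from the existence of a form with nontrivial character one cannot infer the non-existence of an exactly invariant one. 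The correct (and short) argument is the one underlying Theorem~\ref{classic}: if $H$ fixed any non-degenerate bilinear form $m$, then $(xy)^T=m(xy)^{-1}m^{-1}$, so $xy$ would be conjugate to $(xy)^{-1}$; equality of their characteristic polynomials in \eqref{car xy} forces $r_2=dr_4$, which under the hypotheses of (iii) and (iv) reads $2r_4=0$ --- impossible for $p\neq 2$ and $r_4\neq 0$. With that substitution, and with the explicit forms supplied, your outline becomes a complete proof essentially identical to the paper's.
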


\begin{proof}
(i) By the  assumptions and by \eqref{car xy} together with
Remark~\ref{conjugate} we have that $(xy)^\sigma$ is conjugate to $(xy)^{-1}$.
Our claim follows from Theorem~\ref{forms}(ii).

(ii) See Lemma \ref{negative}.

(iii)--(v) Unfortunately, using only Theorem~\ref{forms}(i) or
Corollary~\ref{conformal}, we cannot distinguish symmetric and
skew-symmetric forms. For the symmetric case, necessary conditions
can be deduced from Lemma~\ref{negative}. However, we are still
unable to deal with the symplectic groups in this way. Thus we prefer
to present an alternative approach giving explicit forms. Namely, let
$r_2=\lambda r_4$, $\lambda=\pm1$ and set
$$
J=\left(
 \begin{array}{cccc}
   2 i_1 & i_2             &  r_4 & r_4 \\
  \lambda i_2    & 2 i_1 &  r_4 & r_4 \\
   \lambda r_4    & \lambda r_4     & 2d i_1 & d\lambda i_2 \\
  \lambda r_4     & \lambda r_4     & d i_2            & 2d i_1
  \end{array}
  \right)
$$
where $i_2=2-s-2 i_1$ and $i_1=\frac{2-s-dr_4^2}{s+2}$ if
$\lambda=1$, $i_1=0$ if $\lambda=-1$. Notice that, for $\lambda=1$,
by Corollary~\ref{newreducible}(i) we have $\epsilon\neq-1$, i.e.,
$s\neq -2$. Thus, $i_1$ is well defined.

By a straightforward calculation, $x^T J x = \lambda d J$ and $y^T J
y =J$. Moreover, for $\lambda=1$,  $J$ is symmetric and
   $$\det J=\frac{((s - 2)^2 - 4 dr_4^2)^3}{(s+2)^2}\neq 0,$$
by irreducibility and Corollary~\ref{newreducible}(ii). For
$\lambda=-1$,  $J$ is skew-symmetric and $\det J=(s - 2)^4\neq 0$
again by irreducibility and Corollary~\ref{newreducible}(ii).
\end{proof}

\begin{lemma}\label{powers} Assume $(xy)^h=\rho I$, for some $\rho \in \F$, with $h>0$. Then:

{\rm (i)} $h> 4$.

{\rm (ii)} $h=5$  only when  $s=\pm 1$,  $r_4^2=-sd$, $r_2=-sr_4$.
Moreover, if $s=-1$, the projective image of $H$ is isomorphic to $\Alt(5)$.
If $s=1$, then $H$ is either reducible over $\F$ or contained in the conformal symplectic group. 

{\rm (iii)} $h\ne 6$, unless $\left\langle x,y\right\rangle$ is reducible.

{\rm (iv)} if $h=8$, the following relations hold:
$$\left.
\begin{array}{c}
r_4^2+\rho r_2^2=-ds(1+\rho)\hfill\\
sdr_4^2 + d (1-\rho)r_2r_4 =-s^2-\rho+1\hfill\\
-\rho r_2^4 - 3ds\rho r_2^2- 2d\rho r_2r_4 =\rho s^2 -\rho+1\hfill
\end{array}
\right.
$$
\end{lemma}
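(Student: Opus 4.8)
The plan rests on one observation that makes every part mechanical: $xy$ is always cyclic, with no irreducibility hypothesis needed. Writing $g=xy$ and $e_1=(1,0,0,0)^T$, I would compute $ge_1=de_3$, $g^2e_1=de_2$, $g^3e_1=e_4$, so that $e_1$ is a cyclic vector and the minimal polynomial of $g$ coincides with its characteristic polynomial $\chi_{xy}$ of \eqref{car xy}. Consequently
\[(xy)^h=\rho I\iff \chi_{xy}(t)\mid t^h-\rho\quad\text{in }\F[t],\]
a purely polynomial divisibility, valid in every characteristic and needing no separability. Since the four eigenvalues of $xy$ multiply to $\det(xy)=1$ and each eigenvalue raised to the power $h$ equals $\rho$, I also get $\rho^4=1$. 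Part (i) is then immediate: for $h<4$ a nonzero degree-$4$ polynomial cannot divide $t^h-\rho$, and for $h=4$ the divisibility forces $\chi_{xy}(t)=t^4-\rho$, whose $t^3$-coefficient $-dr_4$ would vanish, against $r_4\neq0$.

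For part (ii) I would write $t^5-\rho=(t-c)\chi_{xy}(t)$ and match coefficients from top to bottom, obtaining in turn $c=-dr_4$, then $s=-dr_4^2$ (that is $r_4^2=-sd$), then $r_2=-sr_4$, then $r_4^4=1$ and $c=\rho$. Combining $r_4^4=1$ with $r_4^2=-sd$ gives $s^2=1$, hence $s=\pm1$, which is the first assertion. If $s=-1$, then $\bar y$ has order $3$ and $\overline{xy}$ has order $5$, so $\bar x,\bar y$ satisfy the relations of Lemma \ref{pres}(i) and the projective image of $H$ is $\Alt(5)$. If $s=1$, then $r_2=-r_4$, and I would invoke the explicit alternating form $J$ built in the proof of Theorem \ref{classic_positive}(iv)--(v): it satisfies $x^TJx=-dJ$, $y^TJy=J$ with $\det J=(s-2)^4\neq0$, so $H$ preserves a non-degenerate alternating form up to scalars and is therefore reducible or contained in $\CSp_4$.

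Parts (iii) and (iv) are the same idea with larger quotients. For $h=6$ I would write $t^6-\rho=\chi_{xy}(t)(t^2+at+b)$; matching coefficients gives $a=dr_4$, $b=r_4^2+ds$, $r_2=-r_4(dr_4^2+2s)$, plus two scalar relations in $r_4,s$ whose difference is $s(2dr_4^2+3s)=0$. In the branch $s=0$ one gets $r_4^4=-1$ and $r_2=-dr_4^3$, which is $r_2=-\epsilon^{\pm1}r_4$ since $\epsilon^2=-1$; in the branch $2dr_4^2+3s=0$ one gets $r_4^2=\pm3$, $s=\mp2d$, hence $\epsilon=\pm1$ and again $r_2=-\epsilon^{\pm1}r_4$. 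Either way Corollary \ref{newreducible}(i) shows $H$ is reducible. For $h=8$ I would reduce powers modulo $\chi_{xy}$ through the recurrence coming from $t^4\equiv dr_4t^3+dst^2+r_2t-1$: setting $t^n\equiv A_nt^3+B_nt^2+C_nt+D_n$ one has $A_{n+1}=dr_4A_n+B_n$, $B_{n+1}=dsA_n+C_n$, $C_{n+1}=r_2A_n+D_n$, $D_{n+1}=-A_n$; iterating to $n=8$ and imposing $t^8\equiv\rho$, i.e. $A_8=B_8=C_8=0$ and $D_8=\rho$, yields after simplification the three displayed relations.

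The conceptual work is entirely absorbed by the cyclicity remark; what is left is polynomial bookkeeping. The hard part will be part (iii), where after the matching one must trace each algebraic branch back to one of the two conditions of Corollary \ref{newreducible}, with separate attention to the degenerate values $\epsilon=\pm1$ (equivalently $s=\pm2$) and to characteristics $2$ and $3$, and part (iv), where the $t^8\bmod\chi_{xy}$ computation, though routine, is the most error-prone. In part (ii) the only non-formal points are the identifications furnished by Lemma \ref{pres}(i) and Theorem \ref{classic_positive}.
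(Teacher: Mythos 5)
Your proof is correct, but its computational engine is genuinely different from the paper's. The paper works with matrix entries throughout: for (i) it exhibits a nonzero off-diagonal entry of $(xy)^h$ for each $h\le 4$; for (ii) and (iii) it sets three chosen entries of $(xy)^5$, respectively $(xy)^6$, equal to zero and solves; for (iv) it uses three entries of $(xy)^4-\rho(xy)^{-4}$. You instead observe that $e_1$ is a cyclic vector for $xy$ (correct: $(xy)e_1=de_3$, $(xy)^2e_1=de_2$, $(xy)^3e_1=e_4$), so the minimal polynomial of $xy$ equals $\chi_{xy}$ \emph{unconditionally}, and $(xy)^h=\rho I$ becomes the divisibility $\chi_{xy}(t)\mid t^h-\rho$. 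This is a real addition, not a repackaging: the paper obtains ``minimal $=$ characteristic polynomial'' for $xy$ only under absolute irreducibility (Remark~\ref{rigid triple}, via Scott's formula), whereas Lemma~\ref{powers} carries no irreducibility hypothesis, so your unconditional cyclicity is exactly the right tool and upgrades the paper's entry-checking into exact equivalences. Your coefficient matching reproduces the paper's systems verbatim -- in (ii): $c=-dr_4$, $r_4^2=-sd$, $r_2=-sr_4$, $r_4^4=1$, hence $s=\pm 1$; in (iii): $r_4^4+dsr_4^2-s^2+1=0$ and $r_4^4+3dsr_4^2+2s^2+1=0$ with difference $s(2dr_4^2+3s)=0$ and the same two branches -- and the group-theoretic endgame is identical (Lemma~\ref{pres}(i) for $\Alt(5)$; Corollary~\ref{newreducible}(i) for reducibility). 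In the $s=1$ case of (ii) your direct appeal to the explicit skew form with $\det J=(s-2)^4=1\neq 0$ even gives $H\le\CSp_4$ outright, slightly sharper than the paper's disjunction ``reducible or conformal symplectic'', which routes through the irreducibility hypothesis of Theorem~\ref{classic_positive}. What your route buys: uniformity in $h$, equivalences rather than necessary conditions, and no need to guess which entries of which power to inspect; what the paper's buys: marginally lighter hand computation. For (iv) you, like the paper, leave the final simplification unexecuted; this is not a gap, since your four conditions $A_8=B_8=C_8=0$, $D_8=\rho$ are \emph{equivalent} to $(xy)^8=\rho I$ and hence must imply the displayed system, but a self-contained write-up should carry out that bookkeeping, just as the paper asserts without proof that the vanishing of $D_{1,1}$, $D_{1,2}$, $D_{2,2}$ is equivalent to the stated relations.
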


\begin{proof}

{\rm (i)} $[xy]_{1,4}=-1$, $[(xy)^2]_{1,2}=-d$, $[(xy)^3]_{1,3}=-d$, $[(xy)^4]_{1,3}=-r_4$.

\smallskip
{\rm (ii)}  Let $D=(xy)^5$. Then
$$D_{1,3}=-dr_4^2-s=0,\quad D_{2,1}=r_4 ds+r_2 d=0,\quad D_{3,1}=r_2r_4-d=0.$$
It follows that $s=-d r_4^2$, $r_2=-s r_4$, $r_4^4=1$. In particular $s=\pm 1$,
whence the conditions in the statement. On the other hand, if these conditions hold,  then $(xy)^5=-d r_4I$.
In particular, if $s=-1$, then $r_4^2=d$, $r_2=r_4$ and $H/(H\cap Z)\cong \Alt(5)$ 
by  Lemma \ref{pres}(i). If $s=1$, then $r_4^2=-d$, $r_2=-r_4$ and either $H$ is reducible over $\F$ or $H\le \CSp_4(q)$ by
Theorem \ref{classic_positive}(iv),(v).

\smallskip
{\rm (iii)} 
Let $D=(xy)^6$.
Then, $D_{4,1}=d r_4^3+ 2s r_4 + r_2=0$, i.e. $r_2=-d r_4^3-2s r_4$. Under this hypothesis, we have
$$D_{1,2}=d r_4^4 + s r_4^2- d s^2+d=0,\quad D_{3,1}= -r_4^5-3 d s r_4^3 - 2 r_4 s^2- r_4=0.$$
It follows that $r_4^4+sdr_4^2-s^2+1=0$ and $r_4^4+3dsr_4^2+2 s^2 +1=0$, whence
$2ds r_4^2+3 s^2=0$. Thus, either $s=0$ or $2dr_4^2+3s=0$.
In the first case, $y$ has order $4$, $r_2=-d r_4^3$ and $r_4^4=-1$.
It follows that $r_4^2=\pm \epsilon$ and so $H$ is reducible by  Corollary \ref{newreducible}(i).
In the second case we have $p\neq 3$ by the assumption $r_4\ne 0$, hence
$s=-\frac{2}{3}dr_4^2$, $r_2=\frac{1}{3}d r_4^3$ and $r_4^4=9$.
It follows either $s=-2d$, $r_2=d r_4$ or $s=2d$, $r_2=-d r_4$.
This implies that $\epsilon=\pm 1$ and, in any case, that  $r_4=-\epsilon r_2$. Again
$H$ is reducible by Corollary \ref{newreducible}(i).
(Note that in both cases $(xy)^6$ is scalar).

\smallskip
{\rm (iv)} Let $D=(xy)^4-\rho(xy)^{-4}$. Then the system of equations $D_{1,2}=D_{2,2}=D_{1,1}=0$ is equivalent to
the system in the statement.

\end{proof}

\section{Conditions under which $H\leq M\in {\cal C}_2$}\label{C2}
\begin{lemma}\label{wreath} If $H$ is absolutely irreducible, it
does not stabilize any $2$-decomposition of $\F^4$.
\end{lemma}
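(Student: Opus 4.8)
The plan is to argue by contradiction: assume $H$ stabilizes a $2$-decomposition $\F^4=V_1\oplus V_2$ with $\dim V_1=\dim V_2=2$, the two summands being permuted by $H$. Since $H$ is absolutely irreducible it cannot fix either summand, so it acts transitively on $\{V_1,V_2\}$ (equal dimensions are forced for the same reason). This gives a surjection $\pi\colon H\to\Z/2\Z$ whose kernel $H_0$ consists of the elements preserving both $V_i$, with $[H:H_0]=2$; an element lies outside $H_0$ exactly when it interchanges $V_1$ and $V_2$. With respect to the decomposition such an interchanging element has block-antidiagonal shape $\left(\begin{smallmatrix}0&A\\B&0\end{smallmatrix}\right)$, hence trace $0$. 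The whole proof then reduces to showing that neither $y$ nor $xy$ can interchange the summands: for if $y,xy\in H_0$ then $x=(xy)y^{-1}\in H_0$, so $H\le H_0$ fixes $V_1$, contradicting irreducibility (equivalently, $\pi$ would be trivial, contradicting surjectivity).

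First I would dispose of $xy$. By \eqref{car xy} the coefficient of $t^3$ in $\chi_{xy}(t)$ is $-dr_4$, so $\tr(xy)=dr_4$, which is nonzero because $d=\pm1$ and $r_4\neq0$ by \eqref{assumption}. Since every summand-interchanging element has trace $0$, this shows $xy\in H_0$.

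The heart of the matter, and the step I expect to be the main obstacle, is ruling out that $y$ itself interchanges $V_1$ and $V_2$. Here the trace criterion is not enough: $\tr(y)=2+s$ vanishes precisely when $s=-2$, and that condition is even vacuous in characteristic $2$, so I would instead exploit the fixed space $V=\ker(y-I)$. A direct computation from \eqref{generators}, using $r_4\neq0$, shows $V=\langle e_1,e_2\rangle$ is $2$-dimensional in all cases (including $s=2$). Suppose $y$ interchanged the summands, so $yV_1=V_2$ and $yV_2=V_1$. For $v=v_1+v_2\in V$ with $v_i\in V_i$, the relation $yv=v$ gives, on comparing $V_1$- and $V_2$-components, $v_1=yv_2$ and $v_2=yv_1$, whence $y^2v_1=v_1$. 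The projection $v\mapsto v_1$ is injective on $V$ (if $v_1=0$ then $v_2=yv_1=0$), so its image is all of the $2$-dimensional space $V_1$; thus $y^2$ fixes $V_1$ pointwise, and then $V_2=yV_1$ as well, forcing $y^2=I$. This contradicts $\mathrm{ord}(y)=k\ge3$. Hence $y\in H_0$, and combined with $xy\in H_0$ the contradiction in the first paragraph completes the proof.
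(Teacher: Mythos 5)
Your proof is correct, and its overall skeleton matches the paper's: both arguments reduce to showing that $y$ cannot interchange the two summands, and both use $\tr{xy}=dr_4\neq 0$ to deal with an interchanging element; your contradiction (if neither $y$ nor $xy$ swaps, then $x$ doesn't either, so $H$ is reducible) is just a contrapositive rearrangement of the paper's (since $y$ fixes both summands, irreducibility forces $xy$ to swap, which kills the trace). The genuine difference is the mechanism for the key claim about $y$. The paper works with $W=\{w\in\F^4 \mid y^2w=w\}$: since $y^2$ preserves both summands, $W$ splits as $(W\cap V_1)\oplus(W\cap V_2)$; from $\left\langle e_1,e_2\right\rangle\le W$ and $y^2\neq I$ it gets $\dim W\in\{2,3\}$, and a short case analysis (with a relabelling of $V_1,V_2$ if needed) produces a nonzero $y$-fixed vector inside one summand, which is incompatible with $y$ swapping them. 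You instead assume $y$ swaps and show that $\ker(y-I)=\left\langle e_1,e_2\right\rangle$, being $2$-dimensional, projects injectively---hence isomorphically---onto $V_1$, so that $y^2$ is the identity on $V_1$ and then on $V_2=yV_1$, forcing $y^2=I$ against $k\ge 3$. Your route buys a cleaner argument, with no case distinction on $\dim W$ and no relabelling of the summands, at the modest cost of needing the exact computation $\ker(y-I)=\left\langle e_1,e_2\right\rangle$ (valid, as you note, even when $s=2$ because $r_4\neq 0$), whereas the paper needs only the containment $\left\langle e_1,e_2\right\rangle\le\ker(y^2-I)$ together with $y^2\neq I$. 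Both arguments are elementary and equally rigorous.
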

\begin{proof}
Assume, by contradiction,  that $H$ stabilizes a $2$-decomposition
$\F^4=V_1\oplus V_2$.
We claim that $y$ fixes $V_1$ (and $V_2$). To this purpose set
$W:=\left\{w\in \F^4\mid y^2w=w\right\}$.
As $y^2$ fixes $V_1$ and $V_2$, we have
$$W=(W\cap V_1)\oplus (W\cap V_2).$$
From $\left\langle e_1,e_2\right\rangle\ \leq W$ and $y^2\neq I$,
it follows $\dim W =2,3$.
If $\dim W =2$, then $W=\left\langle e_1,e_2\right\rangle$.
If $\dim W =3$, we may suppose $V_1\leq W$. So, in both cases,
there exists a non-zero vector $v_1\in \left\langle e_1, e_2\right\rangle \cap V_1$.
We conclude that $V_1$ is fixed by $y$. By the irreducibility of $H$
we must have $V_2=(xy)V_1$ and $V_1=(xy)V_2$, a contradiction as
$\tr{xy} =dr_4\neq 0$.
\end{proof}

\begin{lemma}\label{monomial} If $H$ is absolutely irreducible, it
does not stabilize any $1$-decomposition of $\F^4$.
\end{lemma}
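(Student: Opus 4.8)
The plan is to argue by contradiction: assume $H$ is absolutely irreducible and stabilises a decomposition $\F^4=\ell_1\oplus\ell_2\oplus\ell_3\oplus\ell_4$ into lines. Let $\pi\colon H\to\Sym(4)$ record the induced action on the four lines and write $\bar g=\pi(g)$. Absolute irreducibility forces $\bar H=\pi(H)$ to be transitive. First I would dispose of the imprimitive case: if $\bar H$ preserves a partition of $\{\ell_1,\dots,\ell_4\}$ into two blocks of size two, then $H$ permutes the two $2$-spaces spanned by the blocks, i.e. it stabilises a $2$-decomposition, which is impossible by Lemma \ref{wreath}. Hence $\bar H$ is a primitive transitive subgroup of $\Sym(4)$, so $\Alt(4)\le\bar H\le\Sym(4)$.

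Next I would pin down the cycle type of $\bar y$. Since $y$ is monomial in the chosen basis, its characteristic polynomial factors as the product, over the cycles of $\bar y$, of binomials $t^{m}-c$ ($m$ the cycle length, $c$ the product of scalars along the cycle). Matching such a factorisation against $\chi_y(t)=(t-1)^2(t^2-st+1)$ rules out, in every characteristic, that $\bar y$ is the identity (already excluded by transitivity), a $4$-cycle, or a double transposition; for the borderline value $s=-2$ one compares Jordan types, using that a double-transposition block is diagonalisable whereas $\epsilon=-1$ makes $y$ non-diagonalisable. What survives is: $\bar y$ a transposition, forcing $s=0$ (so $\epsilon^2=-1$, $k=4$, $p$ odd) with the two fixed lines carrying eigenvalue $1$; or $\bar y$ a $3$-cycle, forcing $s=-1$ ($k=3$), cycle-product $1$, and its unique fixed line carrying eigenvalue $1$. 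In both cases the $\bar y$-fixed lines lie in the $1$-eigenspace $V=\langle e_1,e_2\rangle$ of $y$, while $xV=\langle e_3,e_4\rangle$ and $V\cap xV=0$.

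For the transposition case I would count how many of the four lines can lie in $xV$. The two $\bar y$-fixed lines span $V$, so their $x$-images are two distinct decomposition lines lying in $xV$. On the other hand a decomposition line inside $xV$ can only be one of the two transposed lines, which sit in the $\{\epsilon,\epsilon^{-1}\}$-eigenplane $P=\langle u_\epsilon,u_{\epsilon^{-1}}\rangle$ of $y$; a short computation with \eqref{eigenvectory} shows $P\cap xV=\langle e_3-e_4\rangle$ is a single line, so at most one decomposition line meets $xV$ -- contradicting the previous sentence. For the $3$-cycle case I would instead exploit the product $xy$. Since $x^2=dI$ and the unique $\bar y$-fixed line lies in $V$, while $V\cap xV=0$, the matrix $x$ carries that line to a different decomposition line; after relabelling $\bar y=(1\,2\,3)$ and $\bar x=(1\,4)$ or $\bar x=(1\,4)(2\,3)$, whence $\overline{xy}$ is a $4$-cycle or a $3$-cycle fixing one point. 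By the binomial factorisation, the coefficient of $t^3$ (respectively $t^2$) of $\chi_{xy}$ then vanishes, whereas \eqref{car xy} gives $-dr_4\neq0$ (respectively $-ds=d\neq0$, as $s=-1$); contradiction in either subcase.

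The routine parts are the factorisation bookkeeping and the eigenvector computation for $P\cap xV$. The main obstacle I anticipate is that no single elementary-space argument kills both primitive configurations: for the $3$-cycle the naive incompatibility of $V$ and $xV$ breaks down, because each of them meets the decomposition in only one line, so one must instead read off the cycle type of the product $\overline{xy}$ and play it against the explicit characteristic polynomial \eqref{car xy}. Recognising $\chi_{xy}$ as the right invariant here, together with the delicate point in the transposition case that a transposed line can accidentally fall into $xV$, is where the real work lies.
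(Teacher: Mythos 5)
Your proof is correct, but it takes a genuinely different route from the paper's. The paper never analyzes the permutation induced by $y$: it attacks $xy$ directly. Since a monomial matrix whose permutation part is fixed-point free has zero trace, $\tr{xy}=dr_4\neq 0$ forces the image of $xy$ in $\Sym(4)$ to be trivial, a transposition or a $3$-cycle; hence $(xy)^j$ lies in the (conjugated) diagonal group for some $j\in\{1,2,3\}$, and so does $(yx)^j=\left((xy)^j\right)^x$, so these two matrices commute. Explicit entries of their commutator rule out $j=1,2$; for $j=3$, the factorization $\chi_{xy}(t)=(t-dr_4)(t^3-dr_4^{-1})$ compared with \eqref{car xy} gives $s=0$ and $r_2r_4=d$, and the $(1,1)$ entry of the commutator then gives $r_4=\pm\sqrt d$, which is exactly the reducible case of Corollary \ref{newreducible}(ii). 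You instead pin down the cycle type of $\bar y$ first, which buys an early, clean restriction to $k=4$ (transposition) and $k=3$ ($3$-cycle), and you then finish geometrically (counting decomposition lines inside $xV$ against $P\cap xV=\langle e_3-e_4\rangle$) in the first case, and via the cycle type of $\overline{xy}$ played against the coefficients of \eqref{car xy} in the second. The paper's argument is shorter, uniform in $k$, and recycles the reducibility criterion; yours avoids the commutator computations entirely and makes visible where the constraint on $k$ comes from.

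Two small remarks. First, your primitivity step via Lemma \ref{wreath} is never used afterwards: your case analysis of $\bar y$ runs over all cycle types, and the $3$-cycle case only needs that $\bar x$ is an involution moving the fixed point, so that whole step can be dropped. Second, in characteristic $2$ the parenthetical reason you give for excluding a double transposition is not literally valid: there the borderline value is $s=0$ (i.e.\ $k=4$, $\epsilon=1$) and a $2\times2$ block with cycle product $1$ is a Jordan block $J_2(1)$, not diagonalizable. The Jordan-type comparison you appeal to still closes the case, since $y$ then has type $1\oplus J_3(1)$ while the monomial matrix would have type $J_2(1)\oplus J_2(1)$, but the justification should be stated that way rather than via diagonalizability.
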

\begin{proof}
By contradiction, let $H$ stabilize a $1$-decomposition
$\F^4=V_1\oplus V_2\oplus V_3\oplus V_4$. 
Then, for some $g\in \GL_4(\F)$, the group $H^g$ is contained
in the standard monomial group $D\Sym(4)$,
where $D$ consists of the diagonal matrices. From $\tr{xy} =dr_4\neq 0$,
it follows that the image of $(xy)^g$ in $\Sym(4)$ is neither a $4$-cycle, nor the product
of two $2$-cycles.
Thus $(xy)^j\in D^{g^{-1}}$ for some $j=1,2,3$.
For the same $j$, also $(yx)^j= \left((xy)^j\right)^x$ must be in $D^{g^{-1}}$.
In particular $A:=(xy)^j(yx)^j-(yx)^j(xy)^j=0$. We claim that $j\neq 1,2$.
Indeed, if $j=1$, then $A_{23}=-r_4d\neq 0$ and, if $j=2$, then
$A_{21}=-r_4^2d\neq 0$. We are left to consider the case $(xy)^3\in D^{g^{-1}}$
and $xy\not\in D^{g^{-1}}$.
The trace $dr_4$ of $xy$ must be an eigenvalue of $xy$.
Moreover $\chi_{xy}(t)$ factorizes as $(t-dr_4)(t^3-dr_4^{-1})$.
It follows from \eqref{car xy}  that $r_2r_4=d$ and $s=0$.
From $A_{11}=- r_4^2+ d$ we get $r_4=\pm \sqrt d$.  Thus $H$ is reducible
by Corollary \ref{newreducible}(ii).
\end{proof}

\section{Conditions under which $H\leq M\in {\cal C}_6$}\label{C6}

We refer to \cite[Section 4.6, pages 148--155]{KL} for details.
A maximal subgroup $M\in {\cal C}_6$ is the normalizer 
of an absolutely irreducible symplectic-type $2$-group $N$. It follows that $p\ne 2$
and the centre $Z$ of $N$ is scalar. 
Moreover $N$ has exponent $4$, and the factor group $N/Z$ is elementary abelian
of order $2^4$.
Note that, in  every non identity coset of $Z$ in $N$, the
elements of the same order are opposite to each other.
Since conjugation preserves the order of elements, it follows that,
for all $g\in C_M\left(N/Z\right)$ and all $n\in N$:
\begin{equation}\label{derived}
n^g=\pm n.
\end{equation}
In particular $N'=\left\langle -I\right\rangle$, since $N\leq C_M\left(N/Z\right)$,
hence $[n_1,n_2]=n_1^{-1}n_1^{n_2}=n_1^{-1}\left(\pm n_1\right)=\pm I$.

Now, the conjugation action of $\GL_4(\F)$ on $\Mat_4(\F)$ induces a homomorphism:
\begin{equation}\label{mu}
\mu: \GL_4(\F) \to \GL_{16}(\F).
\end{equation}
As $N$ is absolutely irreducible, its linear span $\F N$ coincides with
$\Mat_4(\F)$.
Hence any transversal $T$ of $Z$ in $N$ is a basis for $\Mat_4(\F)$.
Noting that, when $Z$ has order $4$, we may choose $T$ consisting of involutions,
we  can assume that $\mu(M)$ consists of monomial matrices with entries $0,\pm 1$.

Now $\mu$ induces a homomorphism
\begin{equation}\label{tau}
\tau: M\to {\rm Aut}\left(N/Z\right)
\end{equation}\label{symplectic}
\noindent whose kernel is $C_M\left(N/Z\right)$.
If we identify  $N/Z$ with $\F_2^4$, and set
$(Zn_1, Zn_2)=0$ if $[n_1,n_2]=I$,  $(Zn_1, Zn_2)=1$ if $[n_1,n_2]=-I$,
we define a non-degenerate symplectic form: indeed $[n,N]=I$
only if $n\in Z$. As this form is preserved by $\tau (M)$, we have
$\tau(M)\leq \Sp_4(2)$. Note that $\mu(N)$ consists of diagonal matrices.
Let $n_1\in T$ be such that $Zn_1\ne Z$. Then space orthogonal to $Zn_1$ has dimension $3$.
So there are $2^3=8$ elements $n\in T$ for which $n^{n_1}=n$. It follows that the Jordan form
of $\mu(n_1)$ is diag$(1^8,(-1)^8)$.

Finally, let $g\in$ Ker $\tau = C_M\left(N/Z\right)$.
It follows from \eqref{derived} that $g^2$ centralizes $N$, hence $g^2\in Z$
by the absolute irreducibility of $N$. Thus (Ker $\tau)/Z$
is an elementary abelian $2$-group.

From (Ker $\tau)/Z$ normal in $M/Z$, we have that (Ker $\tau)/N$ is a normal 2-subgroup of $M/N$.
But for the groups that we are considering, $M/N$ is isomorphic
to one of the groups $\Alt(5)$,  $\Alt(6)$, $\Sym(5)$,
$\Sym(6)$. Since in all these groups the only normal 2-subgroup is the identity,
we conclude that (Ker $\tau)=N$.

Table \ref{tab:1} is deduced from  the natural action of $\Sp_4(2)$ on $\F_2^4$.
In the last column we consider that case in which $g\in M$ is such that $\tau(g)$
belongs to the corresponding class. Note that $g$ is not unique, nevertheless
this column is consistent by the previous considerations.

Note that, for each $g\in M$, at least one diagonal entry of $\mu(g)$
is $1$, since $\lambda I\in T$, for some $\lambda$, 
and $(\lambda I)^g=\lambda I$.

\begin{table}[th]
\begin{tabular}{ccc}\\
\noalign{\medskip}
{Conj. classes of}\ $\Sp_4(2)$&{Orbit structure on} $\F_2^4$&$\tr{\mu(g)}$\\
\hline
\noalign{\medskip}
$2_1$, $2_2$\hfill &$1^4$, $2^6$\hfill& $4$, $\pm 2$\\
\noalign{\medskip}
$2_3$\hfill &$1^8$, $2^4$\hfill&$8$, $\pm 6$, $\pm 4$, $\pm 2$\\
\noalign{\medskip}
$3_1$\hfill &$1^4$, $3^4$\hfill&$4$, $\pm 2$\\
\noalign{\medskip}
$3_2$\hfill &$1$, $3^5$\hfill& $1$\\
\noalign{\medskip}
$4_1$, $4_2$\hfill &$1^2$, $2$, $4^3$ \hfill& $2$\\
\noalign{\medskip}
$5$\hfill &$1$, $5^3$\hfill&$1$  \\
\noalign{\medskip}
$6_1$\hfill &$1^2$, $2$, $3^2$, $6$\hfill & $2$\\
\noalign{\medskip}
$6_2$\hfill & $1$, $3$, $6^2$\hfill &$1$
\end{tabular}

\caption{{}\quad }   \label{tab:1}
\end{table}

\begin{table}[th]
\begin{tabular}{ccc}
\noalign{\medskip}
$g$\hfill &&$\tr{\mu(g)}$\\
\hline
\noalign{\medskip}
$y$\hfill &&$(s+2)^2$\\
\noalign{\medskip}
$xy$\hfill &&$dr_2r_4$\\
\noalign{\medskip}
$(xy)^2$\hfill &&$r_2^2r_4^2 + 2ds(r_2^2 + r_4^2) + 4s^2$ \\
\end{tabular}
\caption{{}\quad }   \label{tab:2}
\end{table}

\begin{lemma}\label{oddorder} Let $g\in M$. If no power of $\mu(g)$ has the eigenvalue $-1$,
in particular if $g$ has odd order, then $\dim C_{\Mat_4(\F)}(g)$ is equal to the
number of orbits of $\tau(g)$ on $\F_2^4$.
\end{lemma}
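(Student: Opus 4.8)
The plan is to read off $\dim C_{\Mat_4(\F)}(g)$ directly from the monomial shape of $\mu(g)$ established above. Recall that $C_{\Mat_4(\F)}(g)=\ker(\mu(g)-I)$ is exactly the eigenspace of $\mu(g)$ for the eigenvalue $1$, so the quantity to compute is the geometric multiplicity of the eigenvalue $1$ of $\mu(g)$. With respect to the transversal basis $T$ of $Z$ in $N$ (chosen to consist of involutions), $\mu(g)$ is monomial with entries in $\{0,\pm 1\}$, and via the identification $T\leftrightarrow N/Z\cong\F_2^4$ its underlying permutation of $T$ is precisely the action of $\tau(g)$: indeed for each $n\in T$ one has $gng^{-1}=\pm n'$ with $Zn'=\tau(g)(Zn)$. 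In particular the cycles of this permutation are in bijection with the orbits of $\tau(g)$ on $\F_2^4$.

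Next I would decompose $\Mat_4(\F)$ into the $\mu(g)$-invariant subspaces spanned by the individual cycles. On a cycle of length $\ell$ the operator $\mu(g)$ is cyclic and satisfies $\mu(g)^\ell=\delta I$ on that block, where $\delta\in\{\pm 1\}$ is the product of the signs occurring along the cycle; equivalently the characteristic polynomial of the block is $t^\ell-\delta$. Any single basis vector of the cycle is a cyclic vector for this block, so minimal and characteristic polynomials coincide and each eigenvalue has a $1$-dimensional eigenspace; as $p\neq 2$ the eigenvalue $1$ occurs precisely when $\delta=1$ (since $1$ is a root of $t^\ell-1$ but not of $t^\ell+1$). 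Thus the block contributes $1$ to $\dim C_{\Mat_4(\F)}(g)$ when $\delta=1$ and $0$ when $\delta=-1$, and summing over cycles shows that $\dim C_{\Mat_4(\F)}(g)$ equals the number of cycles with $\delta=1$.

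The one point of substance is to show that the hypothesis forces $\delta=1$ on every cycle. If some cycle of length $\ell$ had $\delta=-1$, then $\mu(g)^\ell$ would act as $-I$ on the corresponding block, so $\mu(g)^\ell$ would have $-1$ as an eigenvalue, contrary to the assumption that no power of $\mu(g)$ has eigenvalue $-1$. Hence every cycle has $\delta=1$, and the number of such cycles is the \emph{total} number of cycles, namely the number of orbits of $\tau(g)$ on $\F_2^4$. This yields the asserted equality; everything apart from this sign-product observation is bookkeeping with the monomial form.

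For the final clause I would observe that if $g$ has odd order then so does $\mu(g)$ (as $\mu$ is a homomorphism and $g^{|g|}=I$ forces $\mu(g)^{|g|}=I$), whence every eigenvalue of every power $\mu(g)^j$ is a root of unity of odd order and can never equal $-1$. Thus the hypothesis is automatically satisfied in this case, and the equality follows a fortiori.
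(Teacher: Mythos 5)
Your proof is correct and follows essentially the same route as the paper's: both identify the orbits of $\tau(g)$ with the cycles of the monomial matrix $\mu(g)$ on a transversal basis of $Z$ in $N$, use the hypothesis that no power of $\mu(g)$ has eigenvalue $-1$ to rule out a sign $-1$ around any cycle, and then count one fixed vector per orbit. The only difference is presentational --- you phrase the counting via the characteristic polynomials $t^\ell-\delta$ of the cyclic blocks, while the paper directly exhibits the orbit sums as a basis of $C_{\Mat_4(\F)}(g)$.
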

\begin{proof}
Given $Zn\in N/Z$, let $\left(Zn, Zg^{-1}ng, \dots , Zg^{-h+1}ng^{h-1}\right)$ be its orbit
under $\tau(g)$. If no power of $\mu(g)$ has the eigenvalue $-1$, by \eqref{derived}
we have $g^{-h}ng^h=n$. Thus $\left(n, g^{-1}ng, \dots , g^{-h+1}ng^{h-1}\right)$ is an orbit
of $\mu(g)$ consisting of linear independent matrices over $\F$.
In other words we can take a transversal of $Z$ in $N$ which is the union of orbits of
$\tau(g)$. It follows that a matrix $z=\sum \lambda_i n_i$, with $n_i\in T$,  is centralized by $g$ if and only if
elements in the same orbit have the same coefficients. This means that the orbits sums are a basis
for $C_{\Mat_4(\F)}(g)$, and  the claim follows.
\end{proof}

\begin{lemma}\label{evenorder}  If $H\leq M$ and $y$ has even order $k=2m$, then
$y^m\not\in N$. In particular $Ny$ has order $k$.
\end{lemma}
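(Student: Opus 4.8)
The plan is to establish the displayed inequality $y^m\notin N$ first, and then read off the order of $Ny$ from it. For the reduction, suppose $y^m\notin N$ and let $e$ be the order of $Ny$ in $M/N$, so that $e\mid k$ (because $y^k=I$) and $y^e\in N$. Since $N$ is a $2$-group, $y^e$ has $2$-power order, i.e. $k/e$ is a power of $2$; and $y^m\notin N$ forces $e\nmid m$, since otherwise $y^m=(y^e)^{m/e}\in N$. Comparing $2$-adic valuations, $e\mid 2m$ together with $e\nmid m$ gives $v_2(e)=v_2(k)$, whence $k/e$ is odd; being also a power of $2$ it equals $1$, so $e=k$. This is exactly the ``in particular'' assertion.

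It remains to prove $y^m\notin N$. As $y$ has order $k=2m$, the element $y^m$ is an involution, and from the Jordan form \eqref{Jordan} of $y$ one checks that $y^m$ has eigenvalues $1,1,-1,-1$: in the ordinary case because $\epsilon^m=-1$, and in the case $k=2p$ by the direct computation $y^p=\diag(1,1,-1,-1)$ in characteristic $p$. In particular $y^m$ is non-scalar, so if $y^m\in N$ then $y^m$ is a \emph{non-central} involution and hence $\tau(y)^m=\tau(y^m)=1$, so the order of $\tau(y)$ divides $m$. I aim to contradict this. Recall from \eqref{mu}--\eqref{tau} that $\mu(y)$ is a monomial matrix with entries $0,\pm1$ whose underlying permutation of a transversal of $Z$ in $N$ is exactly $\tau(y)$; thus each cycle of $\tau(y)$ of length $\ell$ contributes a block of $\mu(y)$ whose eigenvalues are the $\ell$-th roots of the corresponding sign $\pm1$.

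In the ordinary case the eigenvalues of $\mu(y)$ are the ratios of the eigenvalues of $y$, namely $1$ (six times), $\epsilon^{\pm1}$ (four times each) and $\epsilon^{\pm2}$ (once each), and $\epsilon$ is a primitive $k=2m$-th root of unity. If every cycle length of $\tau(y)$ divided $m$, an eigenvalue of order $2m$ could come only from a cycle of length exactly $m$ with sign $-1$; but such a cycle would also produce $\epsilon^3$ as an eigenvalue of $\mu(y)$, which is impossible since $\epsilon^3\notin\{1,\epsilon^{\pm1},\epsilon^{\pm2}\}$ whenever $m\ge 3$. Hence the order of $\tau(y)$ does not divide $m$, the desired contradiction. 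In the modular case $k=2p$, the matrix $\mu(y)$ has order $k=2p$ and so a nontrivial unipotent ($p$-)part; over $\F_p$ this can arise only from a cycle of length divisible by $p$, so $p\mid\mathrm{ord}(\tau(y))$. If that order divided $m=p$ it would equal $p$, forcing $p\mid|\Sp_4(2)|=720$; for $p\ge 7$ this is absurd, giving the contradiction at once.

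The main obstacle is the small residue left over by this abstract argument: $k=4$ (where $\epsilon^3=\epsilon^{-1}$ is not forbidden and $\tau(y)$ could a priori be an involution of class $2_1$ or $2_2$), and $k=2p$ with $p\in\{3,5\}$ (where $\Sp_4(2)$ really does contain elements of order $p$). In each of these the order, eigenvalue and trace data of $\mu(y)$ --- including the value $\tr{\mu(y)}=(s+2)^2$ of Table~\ref{tab:2} matched against Table~\ref{tab:1} --- remain consistent with $y^m\in N$, so these cases cannot be excluded by invariants alone. Here I would instead use the explicit conjugation action \eqref{derived}, computing the actual signs $y\,n\,y^{-1}=\pm n$ on a chosen transversal $T$, so as to show that the sign product around the relevant $m$-cycle is forced to be $+1$; this raises the order of $\tau(y)$ from $m$ to $k$ and finishes the proof. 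Alternatively, since $x$ also normalizes $N$, one can feed the second generator into the Table~\ref{tab:1} analysis to eliminate the surviving configurations.
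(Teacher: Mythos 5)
Your reduction of the ``in particular'' clause to the statement $y^m\notin N$ is correct, and your eigenvalue analysis of $\mu(y)$ does rule out $y^m\in N$ in the ordinary case when $m\ge 3$ and in the modular case $k=2p$ when $p\ge 7$. But the proof is not complete: as you yourself concede, the cases $k=4$ (any odd $p$) and $k=2p$ with $p\in\{3,5\}$ survive your analysis, because the order, trace and eigenvalue data of $\mu(y)$ are genuinely consistent with $y^m\in N$ there (for $k=4$, the spectrum $1^6,(\pm i)^4,(-1)^2$ is realized by four negative $2$-cycles together with either the orbit pattern $1^4,2^6$ of class $2_1/2_2$ or the pattern $1^8,2^4$ of class $2_3$). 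These are not removable corner cases: the $k=4$ instance is exactly what Lemma \ref{kappa} needs, since its elimination of $k=4$ compares $\tr{\mu(y)}=(s+2)^2=4$ with the trace $2$ attached to the classes $4_1,4_2$ in Table \ref{tab:1}, and that comparison is vacuous unless one already knows that $\tau(y)$ has order $4$ rather than $2$. Your two proposed remedies (computing the signs $yny^{-1}=\pm n$ on a transversal, or feeding $x$ into the Table \ref{tab:1} analysis) are only sketches: the first would require an explicit model of $N$ and of the embedding of $y$ into $M$, and you give no argument that the sign product around the putative $m$-cycle must be $+1$. So there is a genuine gap, located precisely at the cases the lemma is most needed for.

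The paper closes all cases at once by a short computation that uses both generators, which is the ingredient your proposal lacks. Writing $y=\left(\begin{smallmatrix} I&R\\ 0&S\end{smallmatrix}\right)$ in $2\times 2$ blocks, one gets $y^m=\left(\begin{smallmatrix} I&Y\\ 0&-I\end{smallmatrix}\right)$, where $Y=R\left(I+S+\dots +S^{m-1}\right)$ has rank exactly $1$ (since $r_4\neq 0$ and $S^m-I$ is invertible, $p$ being odd). If $y^m\in N$, then also $(y^m)^x\in N$ because $x\in M$ normalizes $N$; since $N'=\left\langle -I\right\rangle$, the two elements commute up to a sign $\lambda$, and equating the blocks of $y^m(y^m)^x$ and $\lambda\,(y^m)^x y^m$ forces $\lambda=-1$ and $Y^2=2dI$, which is impossible because $Y^2$ has rank at most $1$ and $p\neq 2$. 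This argument is uniform in $k$ and needs no case distinction; as written, your proposal does not prove the lemma in the surviving cases.
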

\begin{proof}
Write
$y=\left(
\begin{array}{cc}
I&R\\
0&S
\end{array}\right),\ R=\left(
\begin{array}{cc}
0&r_2\\
0&r_4
\end{array}\right),\ S=\left(
\begin{array}{cc}
0&-1\\
1&s
\end{array}\right)$. Note that $S$ has order $k$ and
$S^m$ has eigenvalues $\epsilon^{\pm m}=1$ only if $k=2p$, with $p$ odd.
 But 
$$y^m=\left(
\begin{array}{cc}
I&R\left(I+S+\dots +S^{m-1}\right)\\
0&-I
\end{array}\right)$$
where $\Sigma=I+S+\dots +S^{m-1}$ is non singular as
it is a factor of $S^m-I$ and $S^m$ does not have the eigenvalue $1$.
Thus $Y=R\Sigma$ has rank 1.
Assume, by contradiction,
that $y^m\in N$. Then  $(y^m)^x\in N$. Moreover $y^m(y^m)^x=\lambda (y^m)^xy^m$, i.e.,
\begin{equation}\label{ymymx}
\left(
\begin{array}{cc}
-I+dY^2&Y\\
-dY&-I
\end{array}\right)\ =\ \lambda \left(
\begin{array}{cc}
-I&-Y\\
dY&dY^2-I
\end{array}\right).
\end{equation}
We conclude $\lambda=-1$ and $Y^2=2dI$.
 But this is a contradiction as $Y^2$ has rank $\leq 1$ and $p\ne 2$.
The last claim follows from the fact that $N$ is a 2-group.
\end{proof}

\begin{lemma}\label{kappa} If $H\leq M$, then $k=3$, i.e., $s=-1$.
\end{lemma}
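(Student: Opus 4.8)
The plan is to force $\tau(y)$ to have order exactly $k$, which confines $k$ to a very short list, and then to eliminate each value $k\ne 3$ by playing the two conjugation invariants $\dim C_{\Mat_4(\F)}(y)=6$ and $\tr{\mu(y)}=(s+2)^2$ against the orbit data of Table~\ref{tab:1}. First I would determine the order of $\tau(y)=Ny$. When $k$ is even this is exactly Lemma~\ref{evenorder}; when $k$ is odd it is immediate, for $N$ is a $2$-group while $\langle y\rangle$ has odd order, so $\langle y\rangle\cap N=1$ and $\tau$ is injective on $\langle y\rangle$. Since $\tau(M)\le\Sp_4(2)\cong\Sym(6)$ contains no element of order greater than $6$ and $k\ge 3$, we are reduced to $k\in\{3,4,5,6\}$.

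The matrix $\mu(y)$ is a signed permutation matrix whose underlying permutation is $\tau(y)$ acting on $\F_2^4$; since $p\ne 2$, each orbit block contributes at most $1$ to the $1$-eigenspace, and exactly $1$ precisely when its sign-product is trivial. Hence $\dim C_{\Mat_4(\F)}(y)=\dim\ker(\mu(y)-I)$ equals the number of orbits with trivial sign-product, and in particular never exceeds the total number of orbits (this is the mechanism behind Lemma~\ref{oddorder}). For $k=5$, Lemma~\ref{oddorder} applies directly and gives $\dim C_{\Mat_4(\F)}(y)=4$, the orbit count of class $5$, contradicting $\dim C_{\Mat_4(\F)}(y)=6$. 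For $k=4$ the element $\tau(y)$ has order $4$, so it lies in class $4_1$ or $4_2$, each having $6$ orbits; the equality $\dim C_{\Mat_4(\F)}(y)=6$ therefore forces every orbit to have trivial sign, whence $\tr{\mu(y)}$ equals the number of fixed points, namely $2$. But here $s=\epsilon+\epsilon^{-1}=0$, so $(s+2)^2=4$, and $4\ne 2$ because $p\ne 2$.

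The remaining case $k=6$ is the crux, and the main obstacle is that the trace test degenerates: $(s+2)^2=9$, which equals $2$ in characteristic $7$, so Table~\ref{tab:1} cannot by itself close the case. I would instead invoke a characteristic-free invariant. The two order-$6$ classes are $6_1$ (orbit type $1^2,2,3^2,6$) and $6_2$ (orbit type $1,3,6^2$). Class $6_2$ has only $4$ orbits, contradicting $\dim C_{\Mat_4(\F)}(y)=6$, so $\tau(y)$ lies in $6_1$. Now pass to $y^2$: it has odd order $3$, so Lemma~\ref{oddorder} gives $\dim C_{\Mat_4(\F)}(y^2)=\#\{\text{orbits of }\tau(y)^2\}$. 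Squaring a permutation of type $1^2,2,3^2,6$ yields type $1^4,3^4$, i.e.\ class $3_1$, with $8$ orbits; yet a direct reading of the similarity type of $y^2$ gives $\dim C_{\Mat_4(\F)}(y^2)=6$ when $p\ne 3$ (eigenvalues $1,1,\epsilon^{\pm2}$, with $\epsilon^2$ a primitive cube root) and $=10$ when $p=3$ (where $y^2$ is unipotent of Jordan type $(2,1,1)$). Since neither $6$ nor $10$ equals $8$, the case $k=6$ cannot occur for any admissible $p$. Only $k=3$ survives, with $\tau(y)$ sitting consistently in class $3_2$; and $k=3$ is exactly the condition $s=\epsilon+\epsilon^{-1}=-1$ (including $p=3$, where $\epsilon=1$ and $s=2=-1$ in $\F$), which is the assertion of the lemma.
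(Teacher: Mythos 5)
Your proof is correct and takes essentially the same route as the paper: reduce to $k\in\{3,4,5,6\}$ via Lemma~\ref{evenorder} and $\Sp_4(2)\cong\Sym(6)$, eliminate $k=5$ by the orbit count of Lemma~\ref{oddorder}, eliminate $k=4$ by comparing $\tr{\mu(y)}=(s+2)^2$ with the forced value $2$, and eliminate $k=6$ by passing to $y^2$, whose class $3_1$ would force $\dim C_{\Mat_4(\F)}(y^2)=8$ against the true value ($6$, or $10$ when $p=3$). The only deviations are cosmetic: you rule out class $6_2$ by orbit count where the paper uses the trace condition $9\not\equiv 1 \pmod p$, and you derive the trace-$2$ entry for classes $4_1,4_2$ from the signed-orbit mechanism together with $\dim C_{\Mat_4(\F)}(y)=6$ rather than citing Table~\ref{tab:1} directly.
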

\begin{proof}

Lemma \ref{evenorder} and consideration of the conjugacy classes of $\Sp_4(2)\cong \Sym(6)$ give
$k\in \{3,4,5,6\}$.

Let $k=4$, i.e., $s=0$. By the third column of Table \ref{tab:1}, $\tr{\mu(y)}=2$
and, by Table \ref{tab:2}, $\mu(y)$ has trace $4$: a contradiction as $4\not\equiv 2 \pmod p$.

Let $k=5$. By the second column of Table \ref{tab:1}, $\tau(y)$ has $4$ orbits.
Hence, by Lemma \ref{oddorder}, $C_{\Mat_4(\F)}(y)$ should have dimension $4$, in contrast
with \eqref{centralizers}.

Finally, let $k=6$ i.e., $s=1$. Then $\mu(y)$ has trace $9\not\equiv 1\pmod p$.
It follows that $\tau(y)$ cannot be of type $6_2$.
On the other hand, $\tau(y)$ cannot be of type $6_1$. Indeed, in this case,
$\tau(y^2)$ would be of type $3_1$, which gives the contradiction $\dim C_{\Mat_4(\F)}(y^2)=8$.
\end{proof}

\begin{lemma} \label{cond C6}
Assume $H\leq M$. Then $s=-1$ and one of the following holds:

$\bullet$  $\tau(xy)$ has order $5$, $r_2=r_4=\pm \sqrt d$ and the projective image of $H$ is isomorphic to $\Alt(5)$;

$\bullet$ $\tau(xy)$ has order $6$, $r_2=2/(dr_4)$,  $r_4=\pm \sqrt {2d}$ and $H$ has order $2^63^2$.
\end{lemma}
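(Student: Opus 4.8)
The plan is to exploit the two trace tables together with the rigidity consequence $\dim C_{\Mat_4(\F)}(xy)=4$. By Lemma \ref{kappa} we have $s=-1$, so $y$ has order $3$; since $\ker\tau=N$ is a $2$-group, $\tau(y)$ has order $3$, and comparing $\tr{\mu(y)}=(s+2)^2=1$ from Table \ref{tab:2} with the admissible values in Table \ref{tab:1} shows that $\tau(y)$ lies in class $3_2$ (equivalently, by \eqref{centralizers}, $\dim C_{\Mat_4(\F)}(y)=6$ forces every $\tau(y)$-orbit on $\F_2^4$ to carry trivial $\mu(y)$-sign-product, i.e. to be \emph{sign-trivial}). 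Next I would check that $\tau(x)\neq 1$: if $x\in N$, then $H\cap N$ is the normal closure of $x$ in $H$, so its image in $N/Z$ is the $\tau(y)$-orbit of $Zx$, which spans an $\F_2$-subspace of dimension at most $2$; hence the projective image of $H$ has order at most $4\cdot 3=12<16$ and $H$ cannot span $\Mat_4(\F)$, contrary to absolute irreducibility. Thus $\tau(x)$ is an involution and $\tau(H)=\langle\tau(x),\tau(y)\rangle$ is a genuine $(2,3)$-group.

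The heart of the proof is to determine the class of $\tau(xy)$. Because $C_{\Mat_4(\F)}(xy)$ is the fixed space of the signed permutation matrix $\mu(xy)$, Remark \ref{rigid triple} gives that the number of sign-trivial $\tau(xy)$-orbits on $\F_2^4$ equals $4$; reading off the orbit-structure column of Table \ref{tab:1} already restricts the possibilities. For each remaining candidate class I would then read the signs of $\mu(xy)$ off its cycle type and confront the possible values of $\tr{\mu(xy)}$, $\tr{\mu((xy)^2)}$ (and, where needed, $\tr{\mu((xy)^3)}$) with the closed forms $\tr{\mu(xy)}=dr_2r_4$ and $\tr{\mu((xy)^2)}=r_2^2r_4^2-2d(r_2^2+r_4^2)+4$ of Table \ref{tab:2} (with $s=-1$). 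Each confrontation yields polynomial conditions on $(r_2,r_4)$, which I would intersect with the irreducibility conditions of Corollary \ref{newreducible}.

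The outcome of this bookkeeping is that only two classes survive. Classes of order $1$ or $2$ are discarded because they force $\tau(H)$ to be reducible on $\F_2^4$, whence $H$ is reducible by Clifford's theorem together with Lemmas \ref{wreath} and \ref{monomial}; the order-$3$, order-$4$ and $6_2$ classes are discarded because the trace equations push them onto the parameter set of a different class (typically $r_2=r_4=\pm\sqrt d$ or $r_2=r_4=\pm\sqrt{2d}$), so that the true order of $xy$ contradicts the assumed one, or else they violate Corollary \ref{newreducible}. What remains is $\tau(xy)$ of class $5$ (trace $1$, all four orbits sign-trivial), forcing $r_2=r_4=\pm\sqrt d$, and $\tau(xy)$ of class $6_1$ (trace $2$), forcing $dr_2r_4=2$ together with $r_4^2=2d$, i.e. $r_2=2/(dr_4)$ and $r_4=\pm\sqrt{2d}$. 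In the first case the relations $s=-1$, $r_2=r_4=\pm\sqrt d$ are exactly those of Lemma \ref{powers}(ii), giving $(xy)^5$ scalar and $H/(H\cap Z)\cong\Alt(5)$ via the presentation of Lemma \ref{pres}(i). In the second case I would compute the composition structure of $H$ directly from $H\cap N$ and the faithful action of $\tau(H)$ on $N/Z$ to obtain $|H|=2^6 3^2$, a computation conveniently checked in MAGMA.

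I expect the class-by-class elimination to be the main obstacle: the sign-products of $\mu(xy)$, $\mu((xy)^2)$ and $\mu((xy)^3)$ along the orbits must be tracked carefully, and each candidate class must be excluded either by an outright numerical clash with Table \ref{tab:1}, by collapsing onto the parameters of a genuine class (so that the assumed order of $\tau(xy)$ is wrong), or by Corollary \ref{newreducible}; the delicate point is to verify that these three mechanisms together account for \emph{every} class other than $5$ and $6_1$, leaving no gap.
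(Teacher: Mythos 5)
Your skeleton --- $s=-1$ from Lemma \ref{kappa}, then classification of $\tau(xy)$ by confronting Table \ref{tab:1} with the trace formulas of Table \ref{tab:2}, using the rigidity count $\dim C_{\Mat_4(\F)}(xy)=4$ --- is the same as the paper's (which works with $T_1=\tr{\mu(xy)}$, $T_2=\tr{\mu((xy)^2)}$), and your class-$5$ computation agrees with it. The genuine gap is that the elimination of all the other classes, which is where the whole difficulty of the lemma lies, is asserted rather than carried out, and the two mechanisms you sketch would not close it. The paper's key lever is absent from your proposal: since $N/Z$ is elementary abelian and $N^2=N'=\langle -I\rangle$, the statement that $\tau(xy)$ has order $m$ forces $(xy)^{2m}=\pm I$. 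This kills $m=1,2$ instantly by Lemma \ref{powers}(i), kills $m=3$ by Lemma \ref{powers}(iii), and for $m=4$ feeds the system of Lemma \ref{powers}(iv) with $\rho=\pm 1$, whose resolution (leading to $p=3$, $\rho=1$, $r_2=-r_4$ and hence reducibility via Corollary \ref{newreducible}(i)) is the laborious step that actually removes the order-$4$ classes. Your substitute for $m\le 2$ --- that $\tau(H)$ is reducible on $\F_2^4$, ``whence $H$ is reducible by Clifford's theorem together with Lemmas \ref{wreath} and \ref{monomial}'' --- is not a valid inference as stated: a $\tau(H)$-invariant subspace of $N/Z$ only yields an $H$-normalized subgroup of $N$, and converting that into an $H$-invariant subspace or decomposition of $\F^4$ needs an argument you do not supply (and which is unnecessary, given Lemma \ref{powers}). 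Your substitute for the order-$3$, order-$4$ and $6_2$ classes --- ``the trace equations push them onto the parameter set of a different class'' --- is exactly the assertion to be proved: the trace equations only produce candidate values of $(r_2,r_4)$, and one must still show that for those values $(xy)^{2m}$ fails to be $\pm I$, or that $H$ is reducible; that is a computation, not bookkeeping.

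Two further steps are missing inside the surviving order-$6$ case. You take $T_1=2$ and $T_2=0$ for granted, but a $6_1$-element a priori has $T_1\in\{0,2\}$, and $T_2=0$ is not read off any table: the paper obtains it by observing that $(xy)^6\in N\setminus Z$ (it is non-scalar by Lemma \ref{powers}(iii)), so $\mu((xy)^6)$ must have Jordan form $\diag(1^8,(-1)^8)$; this simultaneously rules out class $6_2$ and forces the sign products on the $2$-orbit and the $6$-orbit of $\tau(xy)$ to equal $-1$, whence $T_2=0$. The subcase $T_1=0$ (which gives $r_2=0$, $r_4=\pm\sqrt{2d}$) must then be excluded separately; the paper does this by checking that the $(1,3)$ entry of $(xy)^{12}$ is $-8r_4\neq 0$, contradicting $(xy)^{12}=\pm I$. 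Finally, the equality $|H|=2^6 3^2$ is part of the statement and cannot be delegated to ``a computation conveniently checked in MAGMA'': the paper proves it by exhibiting $Q_8=\langle a^3,b^3\rangle$, with $a=[x,y]$ and $b=[x,y^2]$, as a normal quaternion subgroup of $H$, showing $\langle a,b\rangle/Q_8\cong\Alt(4)$, and noting that $x$ and $y$ commute modulo $\langle a,b\rangle$. Until these eliminations and verifications are actually executed, the proposal is a plan with the right ingredients rather than a proof.
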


\begin{proof}

Let $m$ be the order of $xy$ mod $N$, i.e., the order of $\tau(xy)$, as $N=$ Ker $\tau$.
It follows that $\mu\left((xy)^{m}\right)$ is either scalar or has Jordan form diag$\left(1^8,(-1)^8\right)$.
Note that, by Lemma \ref{powers}, we have $m\neq 2,3$. Moreover, when $(xy)^m$ is not scalar,
also $\mu\left((xy)^{m}\right)$ is not scalar.

Set $T_1=\tr{\mu(xy)}$, $T_2=\tr{\mu((xy)^2)}$. By Table 3, for $s=-1$:
\begin{equation}\label{Traces}
dr_2r_4=T_1,\quad (r_2+r_4)^2= d(T_1^2+4T_1-T_2+4)/2
\end{equation}

$\bullet$ Assume  $m=4$. Then, by Table 2,  $T_1\in \{2,0\}$.
From $N^2=N'=\left\langle -I\right\rangle$ we have $(xy)^8=\pm I$.
Recall that $s=-1$, by Lemma \ref{kappa}.
So the system of equations of Lemma \ref{powers}(iv)  must be satisfied
with $\rho =\pm 1$,  $s=-1$.
If $T_1=0$, then $r_2=0$, and the third equation gives the contradiction $0=1$.
If $dr_2r_4=T_1=2$, the system has the unique solution
$r_4^2=r_2^2=d(2-\rho)$, $r_2^4=2(1-2\rho)$. From $(2-\rho)^2=2(1-2\rho)$ we obtain $p=3$.
Thus $\rho=1$, $r_4=\pm \sqrt d$. It follows $r_2=-r_4$ and
$H$ is reducible by Corollary \ref{newreducible}(i).

\smallskip
$\bullet$ Assume  $m=5$. Then $T_1=T_2=1$. From \eqref{Traces} we get
$r_2=d/r_4$, $(r_2+r_4)^2=4d$. It follows $r_2=r_4=\pm \sqrt d$. By Lemma \ref{pres}(i),
the projective image of $H$ is isomorphic to $\Alt(5)$.

\smallskip
$\bullet$ Assume  $m=6$.
Note that $(xy)^6$ cannot be scalar by Lemma \ref{powers}.
If $\tau(xy)\in 6_1$, then $\mu(xy)^6=$ diag$(1^2,\alpha^2 ,1^6, \beta^6)$ and
if $\tau(xy)\in 6_2$, then $\mu(xy)^6=$ diag$(1,1^3, \alpha^6 , \beta^6)$.
As $(xy)^6\in N\setminus Z$,  by the above discussion we must have $\tau(xy)\in 6_1$, and
$\alpha=\beta=-1$. The condition $\alpha=-1$ gives $T_2=0$.

\smallskip
{\bf Case } $T_1=2$, whence $r_2=2/(dr_4)$.
As $T_2=0$, from \eqref{Traces} we get $(r_2+r_4)^2=8d$. It follows
$r_4=\pm \sqrt {2d}$.  Set $a=[x,y]$, $b=[x,y^2]$, $Q_8=\left\langle a^3,b^3\right\rangle$.
Then $Q_8$ is a normal subgroup of $H$, isomorphich to the quaternion group of order $8$.
From $(ab)^2\in Q_8$ we get that $\left\langle a,b\right\rangle/Q_8\cong \Alt(4)$.
As $x$ and $y$ commute mod $\left\langle a,b\right\rangle$, we conclude that $H$ has order $2^63^2$.

\smallskip
{\bf Case } $T_1=0$. By \eqref{Traces} we have $r_2=0$, $r_4=\pm \sqrt {2d}$.
After substitution of these values, the entry $(1,3)$ of $(xy)^{12}$ becomes
$-8r_4$, a contradiction.
\end{proof}

\section{Conditions under which $H\leq M \in \mathcal{S}$}\label{S}

A maximal subgroup $M$ in the class $\mathcal{S}$ is such that $M/Z$ has a 
unique minimal normal subgroup, which is an absolutely irreducible non-abelian simple group
(cf. \cite[p. 171]{GPPS}). Hence $Z=Z(M)$ is scalar.
Table \ref{tab:3} below describes the possibilities which arise for the groups 
in which we are interested (see \cite {K}). 

\medskip
\begin{table}[!ht]

\begin{tabular}{c|c|c|c}
$M/Z$ & $G$&Conditions\ under which\\
&&  $M/Z$\ is maximal in $G$&$\#$ Conj. Classes\\
\hline\\
$\Alt(7)$\hfill&$\PSL_4(q)$ \hfill& $q=p\equiv 1,2,4 \pmod{7}$\hfill&$(4,q-1)$\\
\hfill  &$\PSU_4(q^2)$\hfill & $q=p\equiv 3,5,6 \pmod{7}$ &$(4,q+1)$\hfill \\
\hfill & $\PSp_4(q)$\hfill & $q=7$ & 1 \hfill \\
\noalign{\medskip}\hline
$\PSp_4(3)$\hfill   &$\PSL_4(q)$\hfill & $q=p\equiv 1 \pmod{6}$\hfill&$(4,q-1)$\\
{}   &$\PSU_4(q^2)$\hfill & $q=p\equiv 5 \pmod{6}$\hfill&$(4,q+1)$\\
\noalign{\medskip}
\hline
 $\PSL_3(4)$\hfill   &$\PSU_4(q^2)$\hfill & $q=3$\hfill&$2$\\
\noalign{\medskip}\hline
$\PSL_2(q)$\hfill   &$\PSp_4(q)$\hfill & $p\geq 5$,\ $q\geq 7$\hfill&$1$\\
\noalign{\medskip}\hline
$\Alt(6)$\hfill   &$\PSp_4(q)$\hfill & $q=p\equiv 2,\pm 5 \pmod {12}$\hfill&1\\
$\Sym(6)$\hfill   &$\PSp_4(q)$\hfill & $q=p\equiv \pm 1 \pmod {12}$\hfill&2\\
\hline
\end{tabular}
\caption{{}\quad }  \label{tab:3}
\end{table}

In view of Lemma \ref{powers}(ii), in the following two Lemmas it is convenient to suppose 
that $(xy)^5$ is non-scalar.

\begin{lemma}\label{A7}
Assume that $H\leq M$, with $M/Z\cong  \Alt(7)$. If $H$ is absolutely irreducible and 
$(xy)^5$ is non-scalar, then
$s=-1$,
\begin{equation}\label{statement}
\begin{array}{c}
r_4=di^h(\omega^4 + \omega^2 + \omega + 1)=
d i^{h}\left(\pm \sqrt{-7}+1\right)/2,\hfill\\
\noalign{\smallskip}
r_2= -i^{3h}(\omega^4 + \omega^2 + \omega)=-i^{3h}\left(\pm \sqrt{-7}-1\right)/2
\end{array}
\end{equation}
where $i^2=-1$ and $\omega$ is a suitable primitive $7$-th root of unity
if $p\neq 7$, $\omega=1$ if $p=7$. 

Moreover $p\ne 2$ and $h=0,2$ if $p=7$ or $p\equiv 11,15,23 \pmod {28}$,
$h=0,1,2,3$ otherwise. 

In particular the projective image of $H$ is $\PSL_2(7)$.
\end{lemma}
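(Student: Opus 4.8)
The plan is to work inside the double cover $2.\Alt(7)$. Since $Z$ is scalar and $M/Z\cong\Alt(7)$, the group $M$ is, modulo scalars, the image of the faithful $4$-dimensional (spin) representation of $2.\Alt(7)$, so $H$ lies in $2.\Alt(7)$ up to scalars and $\overline H=HZ/Z\le\Alt(7)$. The two facts I control about $x$, $y$, $xy$ are their centralizer dimensions in $\Mat_4(\F)$, hence their similarity (eigenvalue-multiplicity) types: by \eqref{centralizers} we have $\dim C_{\Mat_4(\F)}(y)=6$, and by Remark~\ref{rigid triple} and rigidity $\dim C_{\Mat_4(\F)}(xy)=4$. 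The whole argument is to read off these types, match them against the classes of $2.\Alt(7)$ in the $4$-dimensional representation, and then recover $r_2,r_4$ from the traces in \eqref{car xy}.

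First I would pin down $s$. The condition $\dim C_{\Mat_4(\F)}(y)=6$ means that $y$ has multiplicity pattern $[2,1,1]$: one eigenvalue of multiplicity two and two further simple eigenvalues. Running through the non-central classes of $2.\Alt(7)$, the only ones with this pattern are the two classes lying over the $\Alt(7)$-class of type $3^2\,1$; the $3$-cycle class and the order-$2$/order-$4$ classes give $[2,2]$ or $[4]$, and all classes over elements of order $4,5,6,7$ are regular semisimple, i.e.\ $[1,1,1,1]$. Hence $\overline y$ has order $3$, so $\epsilon$ is a primitive cube root of unity and $s=\epsilon+\epsilon^{-1}=-1$ (in particular $p\neq 3$, so that such an $\epsilon$ exists).

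Next I would determine $xy$. Since $\dim C_{\Mat_4(\F)}(xy)=4$, the element $xy$ is regular semisimple, so $\overline{xy}$ lies in a class of pattern $[1,1,1,1]$, i.e.\ has order $4,5,6$ or $7$. Now Lemma~\ref{powers}(i) gives that the least $h$ with $(xy)^h$ scalar exceeds $4$, so $\overline{xy}$ has order $>4$; the standing hypothesis that $(xy)^5$ is non-scalar excludes order $5$; and Lemma~\ref{powers}(iii), together with absolute irreducibility, excludes order $6$. Thus $\overline{xy}$ has order $7$, so $xy$ is, up to a scalar $i^{h}$ ($h\in\{0,1,2,3\}$, the freedom of multiplying by a central element of $\SL_4$), a $7$-element of $2.\Alt(7)$ with eigenvalues $i^{h}\{1,\omega,\omega^{2},\omega^{4}\}$, the sign of $\sqrt{-7}$ in $\omega+\omega^2+\omega^4=(-1\pm\sqrt{-7})/2$ corresponding to the two Galois-conjugate $7$-classes. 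Reading $dr_4=\tr{xy}$ and $r_2=\tr{(xy)^{-1}}$ off \eqref{car xy}, and using $1+\omega^{3}+\omega^{5}+\omega^{6}=-(\omega+\omega^{2}+\omega^{4})$, gives exactly the stated formulas for $r_4$ and $r_2$; the case $p=7$, $\omega=1$, is checked directly.

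Finally, with $\overline x$ an involution, $\overline y$ of order $3$ and $\overline{xy}$ of order $7$, the pair $(\overline x,\overline y)$ satisfies the relations of Lemma~\ref{pres}(ii) once one verifies $[\overline x,\overline y]^4=1$ on the now-explicit matrices, so $\overline H\cong\PSL_2(7)$. Because $\PSL_2(7)$ has no absolutely irreducible $4$-dimensional representation in characteristic $2$ (its $2$-modular irreducibles have dimensions $1,3,3,8$), absolute irreducibility of $H$ forces $p\neq 2$. The admissible $h$ are then settled by a field-of-definition analysis: $r_4$ and $r_2$ must lie in the field imposed by the containment of $H$, so $i^{h}$ must be compatible with whether $i=\sqrt{-1}$ and $\sqrt{-7}$ already lie in $\F_p$; exactly when $i\notin\F_p$ but $\sqrt{-7}\in\F_p$, that is $p=7$ or $p\equiv 11,15,23\pmod{28}$, only $h\in\{0,2\}$ survive, and otherwise all of $h\in\{0,1,2,3\}$ do. I expect the two main obstacles to be the precise bookkeeping of the eigenvalue patterns and character values of the $4$-dimensional representation of $2.\Alt(7)$ (on which the matchings in the first two steps rest, especially for the small primes dividing $|2.\Alt(7)|$) and the field-of-definition computation isolating the permissible values of $h$.
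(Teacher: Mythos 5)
Your two middle steps reproduce the paper's core argument: the projective order of $xy$ is forced to be $7$ exactly as in the paper (Lemma \ref{powers}(i) and (iii) plus the hypothesis on $(xy)^5$), the eigenvalues of $xy$ are $i^h\{1,\omega,\omega^2,\omega^4\}$, comparison with \eqref{car xy} gives \eqref{statement}, and the conclusion $H/Z\cong\PSL_2(7)$ is obtained from Lemma \ref{pres}(ii) by the same direct verification. The genuine gap is in your first step, where you determine $s$ by matching the similarity type of $y$ against the classes of $2.\Alt(7)$. That matching presupposes that $y$ is semisimple with eigenvalue pattern $[2,1,1]$, but the paper's setup explicitly allows $k=p$ and $k=2p$, in which case $y$ is not semisimple: its Jordan forms $\diag(1,J_3)$ and $\diag(1,1,J_2(-1))$ also have centralizer dimension $6$. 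To run your argument one must therefore also determine the Jordan types of the $p$-singular classes of $2.\Alt(7)$ in the mod $p$ reduction of the $4$-dimensional representation for $p=3,5,7$ --- exactly the ``bookkeeping'' you defer, and it is not routine: in characteristic $7$, for instance, one must rule out $7$-elements of Jordan type $\diag(J_3,J_1)$ (which would match $y$ with $k=7$), and this needs the symplectic structure of the mod $7$ representation (Table \ref{tab:3}: here $M$ sits in $\PSp_4(7)$) or its identification as $\Sym^3$ on restriction to $\SL_2(7)$; naive reduction of the characteristic-zero eigenvalues gives the wrong answer. Worse, your parenthetical conclusion ``$p\neq 3$'' is false: Table \ref{tab:3} allows $\Alt(7)<\PSU_4(9)$, i.e.\ $q=p=3$, and there the lemma's conclusion $s=-1$ is realized as $s=2$ (since $-1=2$ in $\F_3$), with $\epsilon=1$, $k=p=3$ and $y$ unipotent of type $\diag(1,J_3)$; no primitive cube root of unity exists, so your step-one inference breaks precisely in a case the lemma must cover.

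The paper avoids all of this representation theory of $2.\Alt(7)$: comparing the \emph{middle} coefficient of the characteristic polynomial of $i^h\diag(1,\omega,\omega^2,\omega^4)$ with \eqref{car xy} gives $ds=i^{2h}$, hence $s=\pm1$ in every characteristic; if $s=1$ then $r_2+r_4=i^{3h}=\pm(2-s)\sqrt{d}$, so $H$ is reducible by Corollary \ref{newreducible}(ii); hence $s=-1$, and $p\neq 2$ follows at once because in characteristic $2$ the excluded case $s=1$ is the only one. This derivation needs nothing beyond the fact that $7$-elements of $\Alt(7)$ are conjugate to their squares and fourth powers, and it also produces the relation $ds=i^{2h}$ tying $d$ to the parity of $h$, which is recorded in Table \ref{tab:4} and used downstream (Lemma \ref{A6}, Theorems \ref{sp4} and \ref{su4}); your route never recovers this relation. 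For the rest, your field-of-definition analysis of the admissible $h$ is, via quadratic reciprocity, equivalent to the paper's appeal to Table \ref{tab:3}, and your characteristic-$2$ exclusion via the $2$-modular degrees of $\PSL_2(7)$ is valid but superfluous once $s=\pm1$ is in hand.
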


\begin{proof}
By Lemma \ref{powers} and our assumption, the projective image of
$xy$ can only have order $7$. Every element of order $7$ in $M/Z$
is conjugate to its square and its fourth power. It follows that for $p\neq 7$,
the Jordan form of $xy$ must be $i^h\cdot \diag(1,\omega, \omega^2,\omega^4)$
where  $\omega$ is a suitable primitive  $7$-th root of $1$ and $h=0,1,2,3$. 
If $p=7$, a scalar multiple
of $xy$ is unipotent.
Thus, for any $p$, the characteristic polynomial of $xy$ is
$$t^4 - i^h(\omega^4 + \omega^2 + \omega + 1)t^3 - i^{2h}t^2 + i^{3h}(\omega^4 + \omega^2 + \omega)t + 1.$$
Comparison with \eqref{car xy}, gives the relations \eqref{statement}
and the further condition $ds=i^{2h}$.
In particular $ds=\pm 1$, hence $s=\pm 1$. If $s=1$, then $r_2+r_4=i^{3h}$, which
is excluded by Corollary \ref{newreducible}(ii). Thus $s=-1$ and, in particular, $p\neq 2$.

If $h$ is odd, then $i^h=dr_4-r_2\in \F_p\!\left[r_2,r_4\right]$.
According to Table 3 this may happen in the unitary case or in the case $\PSL_4(p)$,
$p\equiv 1\pmod 4$.

Finally for all cases listed in \eqref{statement}
the projective images $\bar x,\bar y$ of $x$ and $y$, respectively,
satisfy the presentation of $\PSL_2(7)$ given in Lemma \ref{pres}(ii).
\end{proof}

\begin{lemma}\label{PSp(4,3)}
Assume that $H\leq M$ with $M/Z\cong  \PSp_4(3)$.  Then $p\ne 2,3$.
Moreover, if $H$ is absolutely irreducible and $(xy)^5$ is non-scalar, then $s=0$ and, for some $h=0,1,2,3$:
\begin{equation}
r_2  = i^{-h}\omega, \quad r_4  =  d i^h\omega^2
\end{equation}
where $\omega$ is a primitive cubic root of $1$ and $i^2=-1$.
\end{lemma}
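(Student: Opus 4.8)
My plan is to follow the strategy of Lemma~\ref{A7}: determine the order of the image $\overline{xy}$ in $M/Z\cong\PSp_4(3)$, read off the characteristic polynomial of $xy$ in the ambient $4$-dimensional representation, and compare it with \eqref{car xy}. For the unconditional claim $p\neq2,3$, I would observe that $\PSp_4(3)\cong\PSU_4(2)$, so both $3$ and $2$ are defining characteristics for this simple group; in those characteristics the relevant $4$-dimensional module is the natural symplectic, respectively unitary, module of a classical group, so $M$ would lie in a classical class and not in $\mathcal{S}$. This agrees with Table~\ref{tab:3}, which records the embedding only for $q=p\equiv\pm1\pmod6$. Since $Z$ is scalar, the order of $\overline{xy}$ equals the projective order of $xy$ and so belongs to $\{1,2,3,4,5,6,9,12\}$, the set of element orders of $\PSp_4(3)$. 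By Lemma~\ref{powers}(i) this order is $>4$, the hypothesis that $(xy)^5$ is non-scalar discards $5$, and Lemma~\ref{powers}(iii) with the absolute irreducibility of $H$ discards $6$. Hence $\overline{xy}$ has order $9$ or $12$.

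For the order $9$ case, which yields the conclusion, I would first absorb a central scalar: since $\overline{xy}$ has order $9$ we have $(xy)^9=\rho I$ with $\rho^4=\det\big((xy)^9\big)=1$, say $\rho=i^h$, and then $B:=i^{-h}xy$ satisfies $B^9=I$. By Remark~\ref{rigid triple}, $xy$, hence $B$, has a single similarity invariant, so (as $p\neq3$ makes $B$ semisimple) $\chi_B$ is squarefree and $B$ has four distinct ninth-root-of-unity eigenvalues, of product $1$ and at least one of exact order $9$. The conjugacy (power-map) structure of the order-$9$ elements of $\PSp_4(3)$, equivalently the requirement that the resulting $s$ be of the admissible form $\epsilon+\epsilon^{-1}$, forces this multiset to be $\{\zeta,\zeta^4,\zeta^7,\zeta^6\}$, up to the Galois choice of the primitive ninth root $\zeta$: indeed $\{\zeta,\zeta^4,\zeta^7\}$ is the orbit of $\zeta$ under $\lambda\mapsto\lambda^4$, and the product condition forces the fourth eigenvalue to be $\zeta^6=\omega^2$, where $\omega=\zeta^3$ is a primitive cube root of $1$. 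Therefore
$$\chi_B(t)=(t-\omega^2)(t^3-\omega)=t^4-\omega^2t^3-\omega\,t+1,$$
and, since $xy=i^hB$,
$$\chi_{xy}(t)=t^4-i^h\omega^2t^3-i^{-h}\omega\,t+1.$$
Here the coefficient of $t^2$ is $e_2=\sum_{j\in\{1,2,4,5,7,8\}}\zeta^j=-1-(\omega+\omega^2)=0$. Comparing with \eqref{car xy} gives $ds=0$, i.e.\ $s=0$, together with $r_4=di^h\omega^2$ and $r_2=i^{-h}\omega$, exactly as asserted.

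The remaining and, I expect, hardest step is to exclude order $12$, for which I would use the conjugacy-class and power-map data of $\PSp_4(3)$ in its $4$-dimensional (Weil) representation, checked if convenient with \textsc{magma}. The outcome to be established is that every order-$12$ class produces a characteristic polynomial whose $t^2$-coefficient equals $-1$, forcing $s=\pm1$; the class of trace $0$ then gives $r_4=0$, against \eqref{assumption}, while the remaining complex-conjugate classes give, up to the central scalar, $r_4=-d\omega^{\pm2}$ and $r_2=-\omega^{\mp1}$, for which a short computation yields the relation $r_2=-\epsilon^{\pm1}r_4$ with $\epsilon$ the primitive root determined by $s$, so that $H$ is reducible by Corollary~\ref{newreducible}(i). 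In each case order $12$ is impossible, so only order $9$ remains and the lemma follows. The genuinely delicate point throughout is thus the precise determination of which eigenvalue multisets can occur, i.e.\ the bookkeeping encoded in the character table of the relevant cover of $\PSp_4(3)$.
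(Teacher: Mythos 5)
Your proposal follows the paper's proof essentially step for step: the same reduction, via Lemma \ref{powers} and the non-scalarity of $(xy)^5$, to projective order $9$ or $12$; the same power-map/eigenvalue analysis in the order-$9$ case (your normalization $B=i^{-h}xy$ and reading of all coefficients of $\chi_B$ at once is a slightly cleaner version of the paper's comparison of $\tr{xy}$, $\tr{(xy)^{-1}}$ and $\tr{(xy)^2}$); and the same intended mechanism for order $12$. The order-$9$ half is complete and correct, and your justification of $p\neq 2,3$ is an acceptable substitute for the paper's bare citation of Table \ref{tab:3}.

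The genuine shortfall is the order-$12$ exclusion, which you leave as ``the outcome to be established'' and whose anticipated data are partly wrong. The paper carries it out explicitly: $\Sp_4(3)$ has four classes of order-$12$ elements, each conjugate to its $7$th power, and since $xy$ has four distinct eigenvalues (Remark \ref{rigid triple}) its Jordan form must be $i^h\diag(\beta,\beta^4,\beta^7,1)$, or $i^h\diag(\beta,\beta^5,\beta^{-1},\beta^{-5})$ (excluded because its trace is $0$ while $\tr{xy}=dr_4\neq0$ by \eqref{assumption}), or $i^h\diag(\beta,\beta^7,\beta^6,\beta^{10})$, with $\beta$ a primitive $12$th root of unity; comparison with \eqref{car xy} gives $dr_4=i^{\ell}\beta^2$, $ds=i^{2\ell}$, $r_2=i^{-\ell}(1-\beta^2)=i^{-\ell}\beta^{-2}$, whence $\epsilon=\beta^{\pm2}$ (if $s=1$) or $\epsilon=\beta^{\pm4}$ (if $s=-1$) and $r_2=-\epsilon^{\pm1}r_4$, so $H$ is reducible by Corollary \ref{newreducible}(i). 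Note that $\beta^2=-\omega^{\pm1}$ and $1-\beta^2=\beta^{-2}=-\omega^{\mp1}$, so up to the central scalar the surviving classes give $r_2=-\omega^{\mp1}$ paired with $r_4=-d\omega^{\pm1}$, i.e.\ $r_2r_4=d$ (the same shape as in the order-$9$ case, but now with $s=\pm1$). Your claimed pairing $r_2=-\omega^{\mp1}$, $r_4=-d\omega^{\pm2}=-d\omega^{\mp1}$ amounts instead to $r_2=dr_4$, which does \emph{not} satisfy $r_2=-\epsilon^{\pm1}r_4$ for the admissible $\epsilon$ determined by $s=\pm1$; with your values as written, the reducibility step you invoke would fail to check out. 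So the method you propose is exactly the paper's and, executed correctly, it closes the case; but as submitted the order-$12$ step is an unverified, and in its stated form incorrect, computation rather than a proof.
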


\begin{proof}
By Table \ref{tab:3} we have $q=p\ne 2,3$. Moreover $M=ZM'$ with $M'\cong \Sp_4(3)$.
The set of orders of elements in $\PSp_4(3)$ is $\{1,2,3,4,5,6,9,12\}$.
By Lemma \ref{powers} and the assumption that $(xy)^5$ is non-scalar, the projective image $\bar x\bar y$
of $xy$ can only have order $9$ or $12$.

Assume first that $\bar x\bar y$ has order $9$. Then, $\bar x\bar y$ is conjugate
both to $(\bar x\bar y)^4$ and to $(\bar x\bar y)^7$. In this case we may assume that
also $xy$ has order $9$. By Remark \ref{rigid triple}, it has $4$ different eigenvalues.
So its Jordan form must be $i^h\cdot \diag(\alpha,\alpha^4, \alpha^6,\alpha^7)$,
where $\alpha$ is a primitive $9$-th root of $1$. It follows that
$\tr{xy}=dr_4=i^h\omega^2$, $\tr{(xy)^{-1}} =r_2=i^{-h}\omega$ and $\tr{(xy)^2} =r_4^2+2ds=i^{2h}\omega$, where $\omega=\alpha^3$.
We obtain that $s=0$, $r_2= i^{-h} \omega$, $r_4=d i^h \omega^2$, as in the statement.

Now, suppose  that $\bar x\bar y$ has order $12$. Since $\Sp_4(3)$ does not have elements of order 24,
a scalar multiple of  $xy$ is an element of order 12 in $\Sp_4(3)$,
whose projective image has the same order. 
$\Sp_4(3)$ has 4 classes of such elements. Over $\F_3$, with respect to the 
form $\blockdiag(J,J)$ where $J=$ antidiag$(1,-1)$, they are represented by: 
$$\pm \blockdiag\left(
\left(\begin{array}{cc}
1&1\\
0&1
\end{array}\right),
\left(
\begin{array}{cc}
0&1\\
-1&0
\end{array}\right)
\right),\ \pm\blockdiag\left(
\left(\begin{array}{cc}
1&-1\\
0&1
\end{array}\right),
\left(
\begin{array}{cc}
0&1\\
-1&0
\end{array}\right)
\right) . $$
Each of these representatives is conjugate to its 7-th power in $\Sp_4(3)$, via $\blockdiag(I,\left(
\begin{array}{cc}
1&1\\
-1&1
\end{array}\right) )$.
Recalling that $xy$ must have 4 different eigenvalues, its Jordan form can only be one of the following,
where $\beta$ is a primitive $12$-th root of $1$, and $h=0,1,2,3$:
$$
i^h\diag(\beta,\beta^4,\beta^7,1),\quad i^h\diag(\beta,\beta^5,\beta^{-1},\beta^{-5}),\quad i^h\diag(\beta,\beta^7,\beta^6, \beta^{10}).
$$
The second possibility is excluded, since it has trace $0$.
So $xy$ has characteristic polynomial
$$t^4 + i^{\ell}\beta^2t^3 - i^{2\ell}t^2 - i^{-\ell}(\beta^2 - 1)t + 1,$$
for some $\ell=0,1,2,3$. Comparison with \eqref{car xy} gives:
$$dr_4= i^\ell\beta^2,\quad ds= i^{2\ell}, \quad r_2= i^{-\ell}(1-\beta^2).$$
If $s=1$, then $\epsilon=\beta^{\pm 2}$; if $s=-1$, then $\epsilon=\beta^{\pm 4}$. It follows that
$H$ is reducible by Lemma \ref{newreducible}(i).
\end{proof}

\begin{lemma}\label{A6}
Assume $p\neq 2$, $d=-1$ and $r_2=-r_4$. If $H$ is absolutely irreducible and contained in a subgroup $M$ such that $M/Z\in \{\Alt(6),\Sym(6),\Alt(7)\}$, then $p=q=7$, $r_4=\pm 4$, $s=-1$ and $H/Z\cong \PSL_2(7)$.
\end{lemma}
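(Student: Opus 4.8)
The plan is to compare the projective order $m$ of $xy$ with the element orders available in $\Alt(6)$, $\Sym(6)$ and $\Alt(7)$. Since $H$ is absolutely irreducible, Lemma~\ref{powers}(i)--(iii) says that $(xy)^h$ can be scalar only when $h>4$ and $h\ne 6$; hence $m=5$ or $m\ge 7$. On the other hand the largest element orders in $\Alt(6)$, $\Sym(6)$ and $\Alt(7)$ are $5$, $6$ and $7$ respectively, and only $\Alt(7)$ contains an element of order at least $7$. This splits the argument into the case $m\ge 7$, which forces $M/Z\cong\Alt(7)$, and the degenerate case $m=5$.

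Suppose first $m\ge 7$. Then $M/Z\cong\Alt(7)$ and $m=7$, so $(xy)^5$ is non-scalar and Lemma~\ref{A7} applies: it yields $s=-1$, the relations \eqref{statement}, and $H/Z\cong\PSL_2(7)$. It then remains to feed the hypotheses $d=-1$ and $r_2=-r_4$ into \eqref{statement}. Writing $B=\omega^4+\omega^2+\omega$, those formulas read $r_4=-i^{h}(B+1)$ and $r_2=-i^{3h}B$, so the condition $r_2=-r_4$ becomes $-(-1)^hB=B+1$. This is impossible for $h$ odd and gives $B=-1/2$ for $h$ even; comparing with $B=(\pm\sqrt{-7}-1)/2$ forces $\sqrt{-7}=0$, i.e. $p=7$. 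For $p=7$ one has $\omega=1$, hence $B=3=-1/2$ in $\F_7$, and then $r_4=\mp 4=\pm 4$, while $r_4,s\in\F_7$ give $q=7$. Thus this case reproduces exactly the asserted conclusion, and it is the clean, self-contained part of the proof.

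Suppose now $m=5$, so $(xy)^5$ is scalar. By Lemma~\ref{powers}(ii) we have $r_2=-sr_4$ together with $s=\pm1$ and $r_4^2=-sd$; combined with $r_2=-r_4$ and $r_4\ne0$ (and $p\ne2$) this forces $s=1$ and $r_4=\pm1$. In particular $\bar{y}$ has order $6$, so the case $M/Z\cong\Alt(6)$ is excluded outright, $\Alt(6)$ having no element of order $6$. The remaining possibilities $M/Z\cong\Sym(6)$ and $M/Z\cong\Alt(7)$ with $m=5$ are the delicate point, and I expect them to be the main obstacle. Here $\chi_{xy}(t)=t^4+t^3+t^2+t+1$ by \eqref{car xy}, so $xy$ has order $5$ with the four primitive fifth roots of unity as eigenvalues, while $\bar{x}$, $\bar{y}$ have orders $2,6$; moreover $H\le\Sp_4(q)$ by Theorem~\ref{classic_positive}(v). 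Thus $H/Z$ would be a $(2,6,5)$-generated subgroup of $\Sym(6)$ or $\Alt(7)$ carrying an absolutely irreducible symplectic $4$-dimensional representation. I would eliminate these by pinning down $H/Z$ explicitly: since $r_4=\pm1$ and $r_2=\mp1$ fix all entries of $x$ and $y$, one can either match $\langle\bar{x},\bar{y}\rangle$ against one of the presentations of Lemma~\ref{pres}, or, since $|H|\le 2\,|\Alt(7)|$, verify over the finitely many fields $\F_p$ compatible with $H\le\Sp_4(p)$ that no such containment actually occurs. This order-$5$ elimination, rather than the order-$7$ computation, is where the real work lies.
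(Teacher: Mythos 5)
Your two-case split on the projective order $m$ of $xy$ (namely $m=5$ versus $m\ge 7$, using Lemma~\ref{powers}(i)--(iii) and absolute irreducibility) is exactly the paper's decomposition, and your treatment of the case $m\ge 7$ is correct and complete: it forces $M/Z\cong\Alt(7)$, invokes Lemma~\ref{A7}, and then your explicit substitution of $d=-1$, $r_2=-r_4$ into \eqref{statement} (ruling out $h$ odd, deducing $\sqrt{-7}=0$, hence $p=q=7$, $r_4=\pm4$, $s=-1$, $H/Z\cong\PSL_2(7)$) is a correct expansion of what the paper compresses into ``the statement follows from Lemma~\ref{A7}''. Likewise your derivation of $s=1$, $r_4=\pm1$ in the scalar case, and the exclusion of $\Alt(6)$ via the order-$6$ element $\bar y$, are correct.

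The genuine gap is that you never finish the case where $(xy)^5$ is scalar and $M/Z\cong\Sym(6)$ or $\Alt(7)$: you explicitly defer it (``this is where the real work lies'') and only sketch two strategies, neither of which is executed and neither of which works as stated. Matching $(\bar x,\bar y)$ against Lemma~\ref{pres} is not direct, since $\bar y$ has order $6$ and none of the listed presentations fits such a pair without first finding suitable words in $\bar x,\bar y$; and the ``finitely many fields'' argument is unjustified, because the lemma does not assume $M$ is maximal in any specific classical group, so a priori $p$ ranges over all odd primes and a uniform-in-$p$ argument is required. The paper closes this case with precisely such a uniform step, and it uses $xy^2$ rather than $xy$: with $d=-1$, $s=1$, $r_4=\pm1$, $r_2=\mp1$ the matrices $x,y$ are completely explicit with entries in the prime field, and a direct computation shows that $(xy^2)^h$ is non-scalar for every $1\le h\le 7$ (modulo every odd prime). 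Since $Z=Z(M)$ is scalar and every element of $\Alt(6)$, $\Sym(6)$, $\Alt(7)$ has order at most $7$, the image of $xy^2$ in $M/Z$ would have order at most $7$, a contradiction; this eliminates all three candidate groups at once. Replacing your deferred step by this computation on $xy^2$ would complete your proof along the paper's lines.
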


\begin{proof}
First, suppose that $(xy)^5$ is scalar. Then, $r_4=\pm 1$  by Lemma \ref{powers}. However, in this case $(xy^2)^h$ is not scalar for all $1\leq h\leq 7$, whence $H$ cannot be contained in a subgroup $M$ such that $M/Z\in \{\Alt(6),\Sym(6),\Alt(7)\}$. So, we may assume that $(xy)^5$ is not scalar. By Lemma \ref{powers}, $M/Z$ must be isomorphic to $\Alt(7)$. Thus, the statement follows from Lemma \ref{A7}.
\end{proof}

\begin{lemma}\label{cubics}
Let $k=3$ (i.e., $s=-1$) and $p\neq 2,3$. Assume $d=-1$ and $r_2=-r_4$. Let
$\phi: \mathrm{SL}_2(q) \rightarrow \mathrm{SL}_4(q)$ be the
homomorphism induced by the action of $\mathrm{SL}_2(q)$ on cubic
polynomials in two variables. The group $H$ is
conjugate to the image, under $\phi$, of some subgroup of
$\mathrm{SL}_2(q)$ if
and only if $r_4^4=-3$.
\end{lemma}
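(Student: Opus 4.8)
The plan is to exploit the rigidity of the triple $(x,y,xy)$, guaranteed by Remark~\ref{rigid triple} when $H$ is absolutely irreducible, in order to replace the geometric statement by a numerical one. Recall that $\phi$ is the symmetric cube of the natural module, so an element $g\in\mathrm{SL}_2(q)$ with eigenvalues $\nu,\nu^{-1}$ and trace $\tau=\nu+\nu^{-1}$ is mapped to $\phi(g)$ with eigenvalues $\nu^3,\nu,\nu^{-1},\nu^{-3}$; in particular $\det\phi(g)=1$ and
\[
\chi_{\phi(g)}(t)=(t^2-\tau t+1)\bigl(t^2-(\tau^3-3\tau)t+1\bigr).
\]
As $p\ne 2,3$, this representation is absolutely irreducible. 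First I would record the two class identifications forced by $d=s=-1$: the multiset $\{\nu^3,\nu,\nu^{-1},\nu^{-3}\}$ equals $\{i,i,-i,-i\}$, the eigenvalues of $x$, precisely when $\tau=0$ (so $g$ has order $4$), and equals $\{1,1,\epsilon,\epsilon^{-1}\}$, the eigenvalues of $y$, precisely when $\tau=-1$ (so $g$ has order $3$). Since in both cases $\phi(g)$ is semisimple, a comparison of invariant factors shows $\phi(g)\sim x$, respectively $\phi(g)\sim y$.

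With these identifications rigidity turns the lemma into an existence statement: $H$ is conjugate to the $\phi$-image of a subgroup of $\mathrm{SL}_2(q)$ if and only if there exist $A,B\in\mathrm{SL}_2(q)$ with $\tr{A}=0$, $\tr{B}=-1$ and $\phi(AB)\sim xy$. Indeed, for such a pair the triple $(\phi(A),\phi(B),\phi(AB))$ has the same similarity invariants as the rigid triple $(x,y,xy)$, hence is conjugate to it, so $H=\langle x,y\rangle$ is conjugate to $\langle\phi(A),\phi(B)\rangle=\phi(\langle A,B\rangle)$; the reverse implication is immediate on transporting generators. To analyse the last condition I would put $\tau=\tr{AB}$ and compare the displayed factorisation with $\chi_{xy}(t)=t^4+r_4t^3-t^2+r_4t+1$, read off from \eqref{car xy} for $d=s=-1$ and $r_2=-r_4$. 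Matching the coefficients of $t^3$ and $t^2$ yields $r_4=\tau(2-\tau^2)$ and $\tau^4-3\tau^2+3=0$; writing $v=\tau^2$ (so $v^2=3v-3$ and $r_4^2=v(2-v)^2=3-2v$) gives $r_4^4=(3-2v)^2=-3$, which proves the forward implication.

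For the converse I would reverse this computation. Assuming $r_4^4=-3$, put $\tau:=2r_4/(1+r_4^2)$, which lies in $\mathbb{F}_q$ since $1+r_4^2\ne 0$ (otherwise $r_4^4=1$); a direct check gives $\tau^2=(3-r_4^2)/2$, whence $\tau^4-3\tau^2+3=0$ and $r_4=\tau(2-\tau^2)$. It then remains to realise the trace triple $(0,-1,\tau)$ in $\mathrm{SL}_2(q)$. Taking $A=\bigl(\begin{smallmatrix}0&-1\\1&0\end{smallmatrix}\bigr)$ and a general $B$ of trace $-1$ and determinant $1$, the requirement $\tr{AB}=\tau$ reduces to finding an $\mathbb{F}_q$-point on the affine conic $p^2+p+q^2-\tau q+1=0$; since $\tau^2\ne 3$ for $p\ne2,3$, this conic is nondegenerate and so has $\mathbb{F}_q$-rational points. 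For the resulting $A,B$ we have $\phi(A)\sim x$, $\phi(B)\sim y$ and $\chi_{\phi(AB)}=\chi_{xy}$, and rigidity again exhibits $H$ as conjugate to $\phi(\langle A,B\rangle)$.

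The main obstacle is to upgrade equality of characteristic polynomials to equality of similarity invariants for the product. Because $xy$ has a single similarity invariant (Remark~\ref{rigid triple}), the relation $\phi(AB)\sim xy$ needs $\phi(AB)$ to be non-derogatory with characteristic polynomial $\chi_{xy}$, which is automatic once $\chi_{xy}$ has four distinct roots. As $\chi_{xy}$ is palindromic, a repeated root can occur only at $\pm1$, that is for $r_4=\mp\tfrac12$, and under $r_4^4=-3$ this happens solely when $p=7$, where one computes $\tau=\pm2$ and $\chi_{xy}=(t\mp1)^4$. I would settle this case by observing that $\tau=\pm2$ makes $AB$, and hence $\phi(AB)$, a single Jordan block up to sign, so that $\phi(AB)$ and $xy$ remain conjugate and the equivalence is unaffected; this exceptional characteristic is exactly the one where $\mathrm{PSL}_2(7)$ intervenes in Lemmas~\ref{A7} and~\ref{A6}.
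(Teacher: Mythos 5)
Your proposal is correct in substance, but it takes a genuinely different route from the paper's, at least in the sufficiency direction. The necessity argument is essentially the same computation as the paper's (comparison of $\chi_{xy}$ with the characteristic polynomial of a symmetric cube; the paper writes it in terms of an eigenvalue $\eta$ of $g$, you in terms of the trace $\tau$, and the algebraic identity forcing $r_4^4=-3$ is the same). For sufficiency, however, the paper is entirely constructive: it exhibits explicit $x_2,y_2\in\SL_2(q)$ and an explicit conjugating matrix $Q$ with $\det Q=2^6\cdot 3^2\cdot r_4^6\neq 0$, and verifies $Q^{-1}\phi(x_2)Q=x$ and $Q^{-1}\phi(y_2)Q-y=\frac{r_4^4+3}{64r_4^5}(\cdots)$, so that $r_4^4=-3$ makes $(x,y)$ literally a conjugate of $(\phi(x_2),\phi(y_2))$, uniformly in $p$ and with no case distinctions. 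You replace this by a non-constructive argument: realize the trace triple $(0,-1,\tau)$ in $\SL_2(q)$ via a rational point on a nondegenerate conic, match similarity invariants, and invoke the rigidity of $(x,y,xy)$ from Remark \ref{rigid triple} and \eqref{rigidity}. Your route is more conceptual (it explains why only the characteristic polynomial of $xy$ can matter) and fits the spirit of Sections \ref{Scott's formula}--\ref{preserving}, but it buys this at the cost of extra hypotheses and extra cases that the explicit conjugation avoids. (Also, avoid $p$, $q$ as affine coordinates on your conic: they clash with the characteristic and the field order.)

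Two steps need repair. First, rigidity of $(x,y,xy)$ requires $H$ to be absolutely irreducible; you state this proviso but never verify it. Under the hypotheses of the lemma it does hold: with $r_2=-r_4\neq 0$, $s=-1$, $d=-1$, $p\neq 2,3$, condition (i) of Corollary \ref{newreducible} would force $\epsilon^{\pm 1}=1$ and condition (ii) would force $0=\pm 3\sqrt{-1}$, both impossible. This one-line check is logically indispensable to your argument and must be included. Second, your claim that a palindromic quartic can have a repeated root only at $\pm 1$ is false in general (consider $(t^2+t+1)^2$). Writing $\chi_{xy}=(t^2-\alpha t+1)(t^2-\beta t+1)$ with $\alpha+\beta=-r_4$ and $\alpha\beta=-3$, a repeated root also occurs when the two quadratic factors coincide, i.e. when $r_4^2=4\alpha\beta=-12$, and then the double roots form a reciprocal pair generally different from $\pm 1$. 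This case must be examined as well: combined with $r_4^4=-3$ it gives $144=-3$, i.e. $p\mid 147$, hence again $p=7$ (as $p\neq 3$), where $r_4^2=2$ and the two degenerations collapse into the quadruple root at $\pm 1$ that you already handle by the Jordan-block argument. So your final conclusion — under $r_4^4=-3$ repeated roots occur only for $p=7$, where $\phi(AB)$ and $xy$ are both non-derogatory with the same characteristic polynomial and hence conjugate — is correct, but the case analysis as written has a hole that happens to be harmless only after this extra verification.
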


\begin{proof}
1. Sufficiency. Let us consider
$$
 x_2 =\left(
   \begin{array}{rr}
     0 & -1 \\
     1 &  0
   \end{array}
   \right),
   \qquad
 y_2 =\left(
   \begin{array}{rr}
     -1/2 & 3/(2r) \\
     -r/2 & -1/2
   \end{array}
   \right)
$$
for $0\neq r=r_4\in \mathbb{F}_q$.
These matrices act on the graded algebra $\mathbb{F}_q[t_1,t_2]$
as follows:
\begin{eqnarray*}
 & t_1 \stackrel{x_2}{\mapsto} t_2, \quad
 & t_2 \stackrel{x_2}{\mapsto} -t_1, \\
 & t_1 \stackrel{y_2}{\mapsto} -t_1/2-rt_2/2, \quad
 & t_2 \stackrel{y_2}{\mapsto} 3t_1/(2r)-t_2/2.
\end{eqnarray*}
The restriction of this action to the subspace of cubic polynomials (with the standard basis $t_1^3$, $t_1^2t_2$, $t_1t_2^2$, $t_2^3$)
gives us
$$
 \phi(x_2) =\left(
   \begin{array}{rrrr}
 0 & 0 & 0 &-1 \\
 0 & 0 & 1 & 0 \\
 0 &-1 & 0 & 0 \\
 1 & 0 & 0 & 0 \\
   \end{array}
   \right),
   \qquad
 \phi(y_2) =\left(
   \begin{array}{rrrr}
     -1/8  &   3/(8r)  & -9/(8r^2) & 27/(8r^3) \\
   -3r/8   &    5/8    & -3/(8r)   &-27/(8r^2) \\
  -3r^2/8  &   r/8     &     5/8   &    9/(8r) \\
   -r^3/8  &   -r^2/8  &    -r/8   &     -1/8
   \end{array}
   \right).
$$
Let
$$
 Q =\left(
   \begin{array}{rrrr}
       3   &      -3  &    r  &    r \\
   3 r^3 &  3 r^3 & 3 r^2 & -3 r^2 \\
  -3 r^2 &  3 r^2 & 3 r^3 &  3 r^3 \\
      r  &    r   &    -3   &    3
   \end{array}
   \right).
$$
We have $\det Q = 2^6\cdot 3^2 \cdot r^6$, which is non-zero under
our assumptions. 
A direct calculation shows that $Q^{-1}\phi(x_2) Q =x$ and
\begin{eqnarray*}
 && Q^{-1}\phi(y_2) Q - y = \\
 &&  = \frac{r^4+3}{64r^5}
  \left(
  \begin{array}{rrrr}
  -3 r^5 + 8 r^3 - 9 r   &  -3 r^5 + 8 r^3 - 9 r &
    -7 r^4 - 24 r^2 + 27   &  -r^4 + 24 r^2 - 27 \\
  -3 r^5 - 8 r^3 - 9 r   &  -3 r^5 - 8 r^3 - 9 r  &
    -7 r^4 + 24 r^2 + 27   &  -r^4 - 24 r^2 - 27 \\
   3 r^6 + 9 r^2  & 3 r^6 + 9 r^2 &  7 r^5 - 27 r &  r^5 + 27 r \\
  -3 r^6 + 15 r^2 &  -3 r^6 + 15 r^2 &  -7 r^5 - 45 r &  -r^5 + 45r
   \end{array}
   \right).
\end{eqnarray*}
In particular, if $r^4=-3$, then $Q^{-1}\phi(y_2) Q = y$, which proves sufficiency.

2. Necessity. Let $g\in \mathrm{SL}_2(q)$. If $g$ is semisimple, we denote its eigenvalues by $\eta$, $\eta^{-1}$. If $g$ is not semisimple,  let $\eta=\pm1$ be the only root of its characteristic polynomial. In both cases we can write the characteristic polynomial of $\phi(g)$ as
 \begin{eqnarray*}
   && (t-\eta^3)(t-\eta)(t-\eta^{-1})(t-\eta^{-3}) =
    t^4-
    (\eta^3 + \eta + \eta^{-1} + \eta^{-3}) t^3 + \\
   && \qquad\qquad
    (\eta^4 + \eta^2 + 2 + \eta^{-2} + \eta^{-4})t^2 -
    (\eta^3 + \eta + \eta^{-1} + \eta^{-3}) t +
    1.
 \end{eqnarray*}
On the other hand, the characteristic polynomial of $xy$ is
 $$
 t^4 + r t^3 - t^2 + r t + 1.
 $$
In particular, if $xy$ is conjugate to $\phi(g)$ for some $g$, then
 $$
  \left\{
   \begin{array}{l}
      r= \eta^3 + \eta + \eta^{-1} + \eta^{-3}, \\
      \eta^4 + \eta^2 + 3 + \eta^{-2} + \eta^{-4} =0.
   \end{array}
  \right.
 $$
Therefore,
 \begin{eqnarray*}
  r^4+3 &=&  (\eta^3 +\eta +\eta^{-1}+\eta^{-3})^4+3 =
   (\eta^4 + \eta^2 + 3 + \eta^{-2} + \eta^{-4})\times \\
    && \times (\eta^8 + 3\eta^6 + 4\eta^4 + 6\eta^2 + 8 + 6\eta^{-2} + 4\eta^{-4} + 3\eta^{-6} + \eta^{-8}) =0.
 \end{eqnarray*}
This completes the proof.
\end{proof}

\begin{lemma}\label{PSL2}
Assume $d=-1$, $r_2=-r_4$ and $H\leq M$, with $M/Z\cong \PSL_2(q)$.  Then  $p\neq 2,3$, $k=3$   and $r_4^4=-3$.
\end{lemma}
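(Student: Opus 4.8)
The plan is to pin down $M$ as the image of the symmetric-cube map $\phi$ from Lemma~\ref{cubics} and then read off the conclusion from that lemma. First I would invoke Table~\ref{tab:3}: a maximal subgroup $M$ with $M/Z\cong\PSL_2(q)$ occurs only in $G=\PSp_4(q)$, and only for $p\ge5$, which gives at once $p\ne2,3$. Since this $M$ sits in $\Sp_4(q)$, whose centre is $\{\pm I\}$, and since the $4$-dimensional projective representation affording the class-$\mathcal{S}$ subgroup $\PSL_2(q)\le\PSp_4(q)$ is the symmetric cube --- i.e.\ exactly the homomorphism $\phi\colon\SL_2(q)\to\Sp_4(q)$ of Lemma~\ref{cubics}, whose image preserves a symplectic form --- the equality $\phi(-I_2)=-I_4$ shows that the full preimage of $M/Z$ in $\Sp_4(q)$ is $\phi(\SL_2(q))$ itself. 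Thus, up to conjugacy, $H\le\phi(\SL_2(q))$, so $H$ is conjugate to $\phi(K)$ for $K=\phi^{-1}(H)\le\SL_2(q)$; in particular $y$ is conjugate to $\phi(g)$ for some $g\in\SL_2(q)$.

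The core step is then to force $s=-1$ by comparing eigenvalues, recalling from \eqref{Jordan} that $y$ has eigenvalues $1,1,\epsilon,\epsilon^{-1}$, while $\phi(g)$ has eigenvalues $\eta^3,\eta,\eta^{-1},\eta^{-3}$ if $g$ has eigenvalues $\eta,\eta^{-1}$. If $g$ is semisimple, the multiset identity $\{1,1,\epsilon,\epsilon^{-1}\}=\{\eta^3,\eta,\eta^{-1},\eta^{-3}\}$ requires the eigenvalue $1$ to appear twice on the right; as $\eta=1$ would make it appear four times, this forces $\eta^3=1$ with $\eta\ne1$, so $\epsilon=\eta^{\pm1}$ is a primitive cube root of unity and $k=3$, $s=-1$. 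If $g$ is not semisimple then $g=\pm u$ with $u$ unipotent, so all four eigenvalues of $\phi(g)$ coincide; comparing with $1,1,\epsilon,\epsilon^{-1}$ and using that $p$ is odd, this can only happen when $\epsilon=1$, i.e.\ $k=p$ and $y$ is unipotent. But then $y$ has Jordan type $1+3$ (from the similarity invariants $t-1$, $(t-1)^3$), whereas $\phi(u)$ is a single Jordan block of size $4$ because $3<p$ --- a contradiction. The remaining even case $k=2p$, where $\epsilon=-1$, is excluded already at the level of eigenvalues, since $1,1,-1,-1$ is not a constant multiset. Hence $s=-1$.

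With $p\ne2,3$, $k=3$, $d=-1$ and $r_2=-r_4$ now all established, the hypotheses of Lemma~\ref{cubics} are satisfied, and the fact that $H$ is conjugate to $\phi(K)$ for some $K\le\SL_2(q)$ yields, by the ``only if'' direction of that lemma, the desired relation $r_4^4=-3$.

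I expect the main obstacle to be the clean identification $M=\phi(\SL_2(q))$ up to conjugacy: one must argue both that the relevant $\mathcal{S}$-subgroup is afforded precisely by the symmetric cube and that no extra scalars survive (so that $y$ is genuinely conjugate to some $\phi(g)$, not merely to $\zeta\phi(g)$), since it is this statement that legitimises the eigenvalue comparison. The eigenvalue and Jordan-type bookkeeping used to eliminate $k=p$ and $k=2p$ is the secondary technical point, but it is routine once the embedding is understood.
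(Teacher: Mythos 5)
Your proof is correct and follows essentially the same route as the paper: Table \ref{tab:3} gives $p\ge 5$, the identification of $M$ (up to conjugacy) with $\phi(\SL_2(q))$, an eigenvalue/Jordan-form comparison of $y$ with $\phi(g)$ forcing $k=3$ (semisimple case gives $\eta$ of order $3$; the non-semisimple case fails because $\phi$ of a nontrivial unipotent is a single Jordan block, so its fixed space is $1$-dimensional while that of $y$ is $2$-dimensional), and finally the necessity direction of Lemma \ref{cubics} yielding $r_4^4=-3$. The only difference is that the paper disposes of your "main obstacle" by citing Mitchell \cite{M} (see also \cite[Theorem 3.8]{Ki}) for the fact that $M=Q^{-1}\phi(\SL_2(q))Q$ for some $Q\in\GL_4(q)$, which is exactly the identification you sketch via the symmetric cube and $\phi(-I)=-I$.
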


\begin{proof}
By the table at the beginning of this Section we may assume $p\ge 5$. Let $\phi: \mathrm{SL}_2(q) \rightarrow \mathrm{SL}_4(q)$ be the homomorphism
induced by the action of $\mathrm{SL}_2(q)$ on cubic polynomials in two variables $t_1,t_2$.
Then, $M=Q^{-1}\phi(SL_2(q))Q$ for some $Q\in GL_4(q)$ by \cite{M} (see also \cite[Theorem 3.8]{Ki}).
Let $\overline y\in \mathrm{SL}_2(q)$ be such that $\phi(\overline y)=Q^{-1}yQ$.
Assume first that $\overline y$ is semisimple, with eigenvalues $(\alpha, \alpha^{-1})$ over $\F$.
Then $\phi(\overline y)$ has eigenvalues $(\alpha^3, \alpha,\alpha^{-1}, \alpha^{-3})$.
Imposing that two of them are $1$,  we get that $y$ has order $3$.
Next, assume that $\overline y$ is conjugate to
$\left(\begin{array}{cc}
\alpha&1\\
0&\alpha
\end{array}
\right)$, where $\alpha=\pm 1$. Then $\phi(\overline y)$ has the unique eigenvalue $\alpha^3$, whence we get the condition $\alpha=1$.  In this case the eigenspace of $\phi(\overline y)$ relative to $1$ has dimension 1, a contradiction.
Our conclusion follows from Lemma \ref{cubics}.
\end{proof}

\section{Positive results}\label{results}

We recall  that $\F$ is an algebraically closed field of characteristic $p>0$ and $3\leq k\in \N$.  If $(k,p)=1$, then $\epsilon\in \F$ has order $k$.
If $k=p$, $2p$, then   $\epsilon=1$,  $\epsilon=-1$ respectively.
We set $H=\langle x,y \rangle$ with $x,y$ defined as in \eqref{generators},
with $d=\pm 1$,  $s=\epsilon +\epsilon^{-1}$, $r_1=r_3=0$, $r_2\in \F$,
$0\ne r_4\in \F$, i.e.:
\begin{equation}\label{generators2}
x= \left(
\begin{array}{cccc}
0&0&1&0\\
0&0&0&1\\
d&0&0&0\\
0&d&0&0
\end{array}\right), \quad
y=\left(
\begin{array}{cccc}
1&0&0&r_2\\
0&1&0&r_4\\
0&0&0&-1\\
0&0&1&s
\end{array}\right).
\end{equation}
For a fixed $k$, in Table ~\ref{tab:4} we summarize the results from Sections ~\ref{sec:further}--\ref{S},
and describe all the exceptional values of $r_2,r_4$ for which $H$ may be 
contained in some maximal subgroup of the finite classical group under consideration.
The values which correspond to the subfield subgroups do not appear in Table ~\ref{tab:4};
see Lemma \ref{minimal field} instead.

\begin{table}[!h]
\begin{tabular}{cccc}
References& $p$ & Conditions& $HZ/Z$\\
\hline
Corollary \ref{newreducible}&any & $r_2=-\epsilon^{\pm 1} r_4$ & reducible \\
{} &any & $r_2 + r_4=\pm (2- s)\sqrt d$ &  reducible \\  \hline
Lemma \ref{powers} &any & $s=-1$, $r_4^2=d$, $r_2=r_4$ & $\cong \Alt(5)$\\
{} &any& $s=1$, $r^2=-d$,\ $r_2=-r_4$  & $\le CPSp_4(\F)$\\ \hline
Lemma \ref{cond C6} &$\ne 2$ & $s=-1$, $r_2=r_4=\pm \sqrt d$ & $\le M\in {\cal C}_6$ \\
&$\ne 2$& $s=-1$, $r_2=r_4=\pm \sqrt{2d}$ & $\le M\in {\cal C}_6$\\ \hline
Lemma \ref{A7}&$\ne 2$& $s=-1$, $ds=i^{2h}$, $r_2=-i^{3h}\left(\pm \sqrt{-7}-1\right)/2$, & $\le M\in {\mathcal S}$\\
&& $r_4=d i^{h}\left(\pm \sqrt{-7}+1\right)/2$& {} {}\\
\hline
Lemma \ref{PSp(4,3)}&$\ne 2,3$& $s=0$, $r_2  = i^{-h}\omega, r_4 = d i^h\omega^2$, & $\le M\in {\mathcal S}$  \\
{} && $\omega$ a primitive $3^{rd}$ root of $1$ & {}{} \\\hline
Lemma \ref{PSL2} &$\ne 2,3$&  $d=-1$, $s=-1$, $r_2=-r_4$, $r_4^4=-3$ & $\le M\in {\mathcal S}$\\
\hline
\end{tabular}
\caption{{}\quad }  
 \label{tab:4}

\end{table}

For any power $q=p^a$, we have $\F_q\le \F$. It is important to note that,
whenever $k\mid(q-1)$ or $k\mid(q+1)$ or $k=p$ or $k=2p$, then $s\in \F_q$.

\begin{theorem}\label{sl4}  Let $\F_q=\F_p\left[s, r_4^2\right]$, with
$0\neq r_4\in \F_q$.  Define $x$ and $y$ as in \eqref{generators2},
with $r_2=0$, $r_4\ne \pm (s-2)\sqrt d $. Under these assumptions $H=\langle x,y \rangle = \SL_4(q)$.
In particular, for all $k\ge 3$ such that $k\mid (q-1)$ or $k\mid(q+1)$ or $k\in \left\{p,\ 2p\right\}$,
the groups $\SL_4(q)$, $q>3$, and  $\PSL_4(q)$, $q>2$, are $(2,k)$-generated.

Moreover $\SL_4(2)$ is $(2,4)$-generated and 
$\SL_4(3)$ is $(2,3)$ and $(2,6)$-generated.
\end{theorem}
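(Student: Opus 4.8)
The plan is to prove the equality $H=\SL_4(q)$ first, and then to read off the three generation statements. The core of the argument is an elimination against Aschbacher's classification: assuming $H$ is a proper subgroup, it lies in a maximal $M$ belonging to one of the classes $\mathcal C_1,\dots,\mathcal C_8,\mathcal S$, and I would exclude each in turn using the results already assembled in Sections~\ref{sec:further}--\ref{S}.

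First I would verify the hypotheses of that machinery. Since $r_2=0$, $s,r_4\in\F_q$ and $d=\pm1$, both generators have determinant $1$ and entries in $\F_q$, so $H\le\SL_4(q)$. For absolute irreducibility I invoke Corollary~\ref{newreducible}: its reducibility condition (i), $r_2=-\epsilon^{\pm1}r_4$, fails because $r_4\neq0$, and condition (ii), $r_4=\pm(2-s)\sqrt d$, is exactly what the hypothesis $r_4\neq\pm(s-2)\sqrt d$ forbids. With $H$ absolutely irreducible the elimination runs as follows. Irreducibility excludes $\mathcal C_1$ and $\mathcal C_3$; Lemmas~\ref{wreath} and \ref{monomial} exclude the two subcases of $\mathcal C_2$; Theorem~\ref{classic}(iii), applied with $r_2=0\neq r_4$, shows $H$ lies in none of $\CSp_4(\F)$, $\CO_4(\F)$, $\CU_4(\F)$ and thereby disposes of $\mathcal C_8$ and of the $\mathcal C_4\cup\mathcal C_7$ subgroups, which fix $J\otimes J$ up to a scalar; Lemma~\ref{minimal field} identifies the field of definition as at least $\F_p[s,r_4^2]=\F_q$, killing the subfield class $\mathcal C_5$; and Lemma~\ref{cond C6} excludes $\mathcal C_6$, both of whose conclusions force $r_2\neq0$. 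For $\mathcal S$, Table~\ref{tab:3} leaves only $M/Z\cong\Alt(7)$ and $M/Z\cong\PSp_4(3)$ for $\SL_4(q)$, and Lemmas~\ref{A7} and \ref{PSp(4,3)} again conclude $r_2\neq0$; these apply because $(xy)^5$ is non-scalar, since $(xy)^5=\rho I$ would force $s=\pm1$ and $r_2=-sr_4\neq0$ by Lemma~\ref{powers}(ii). Hence $H=\SL_4(q)$.

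For the generation statements I would fix an admissible $k$, so that $s=\epsilon+\epsilon^{-1}\in\F_q$, and take $d=1$, which makes $x$ a genuine involution while $y$ keeps order $k$. It then suffices to produce $0\neq r_4\in\F_q$ with $\F_p[s,r_4^2]=\F_q$ and $r_4\neq\pm(s-2)$; the first part gives $\langle x,y\rangle=\SL_4(q)$, hence $(2,k)$-generation of $\SL_4(q)$ and, on passing to the quotient, of $\PSL_4(q)$. Existence of such $r_4$ is a counting problem: by Lemma~\ref{subfields} the number of $r\in\F_q^\ast$ with $\F_p[s,r^2]\subsetneq\F_q$ is small compared with $q-1$, so after also discarding the at most two values $\pm(s-2)$ an admissible $r_4$ survives once $q$ is large enough. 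This is the source of the bounds $q>3$ for $\SL_4$ and $q>2$ for $\PSL_4$; in the projective case I would, when needed, also allow $d=-1$, which removes the reducibility constraint whenever $\sqrt d\notin\F_q$ and so covers fields inaccessible with $d=1$. Finally, the three exceptions: for $\SL_4(2)\cong\Alt(8)$ with $k=4$ one has $p=2$, $s=0$, and $r_4=1$, $d=1$, $r_2=0$ meet all hypotheses, the whole elimination going through — the only delicate point being $\mathcal S$ with $M/Z\cong\Alt(7)$ (which does occur for $q=2$), excluded because Lemma~\ref{A7} forces $s=-1$, i.e.\ $p\neq2$, against $s=0$. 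For $\SL_4(3)$ with $k=3$ one has $s=-1$, and $d=1$, $r_2=0$, $r_4=1$ give $\F_3=\F_3[s,r_4^2]$ and $r_4\neq\pm(s-2)=0$, while class $\mathcal S$ is vacuous since neither $\Alt(7)$ nor $\PSp_4(3)$ is maximal in $\PSL_4(3)$ by Table~\ref{tab:3}, yielding $(2,3)$-generation; the $(2,6)$-generation I would deduce from this by the standard device, using $y^3=1$ and $-I\in\SL_4(3)=\langle x,y\rangle$ to conclude that $-y$ has order $6$ and $\langle x,-y\rangle=\langle x,y,-I\rangle=\SL_4(3)$.

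The routine part is the Aschbacher elimination, which is essentially mechanical once the lemmas of Sections~\ref{sec:further}--\ref{S} are available. The real obstacle is the counting argument guaranteeing an admissible $r_4$ for every relevant pair $(q,k)$: the joint constraints $\F_p[s,r_4^2]=\F_q$ and $r_4\neq\pm(s-2)\sqrt d$ can leave very few candidates over small fields, and determining exactly when they do is what pins down the ranges $q>3$ and $q>2$ and forces the separate, $k$-specific treatment of $\SL_4(2)$ and $\SL_4(3)$.
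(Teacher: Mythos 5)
You follow essentially the same route as the paper. For the equality $H=\SL_4(q)$ you run the identical Aschbacher elimination: Corollary \ref{newreducible} for absolute irreducibility (hence ${\mathcal C}_1\cup{\mathcal C}_3$), Lemmas \ref{wreath} and \ref{monomial} for ${\mathcal C}_2$, Theorem \ref{classic}(iii) for ${\mathcal C}_4\cup{\mathcal C}_7\cup{\mathcal C}_8$, Lemma \ref{minimal field} for ${\mathcal C}_5$, Lemma \ref{cond C6} for ${\mathcal C}_6$, and Lemmas \ref{A7} and \ref{PSp(4,3)} for $\mathcal S$, the last three excluded because their conclusions force $r_2\neq 0$; your observation that $(xy)^5$ cannot be scalar (Lemma \ref{powers}(ii) would give $r_2=-sr_4\neq0$) is exactly the point needed to apply the $\mathcal S$-lemmas. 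The counting via Lemma \ref{subfields}, the use of $d=1$ for $\SL_4$ versus $d=-1$ for the projective case, and the treatment of $\SL_4(2)$ and $\SL_4(3)$ also match; your derivation of the $(2,6)$-generation of $\SL_4(3)$ from the $(2,3)$-generation via $(-y)^3=-I$, so that $\langle x,-y\rangle=\langle x,y,-I\rangle=\SL_4(3)$, is a slightly more direct variant of the paper's argument, which instead uses perfectness to exclude an index-$2$ subgroup.

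The one genuine gap is $q=4$. Your counting step only asserts that an admissible $r_4$ survives ``once $q$ is large enough'', and you then treat $q=2$ and $q=3$ by hand; but $q=4$ lies inside the claimed ranges $q>3$ and $q>2$, and there the generic count is inconclusive: Lemma \ref{subfields} gives only $N\le 2(p-1)=2$, and together with the excluded values $0$ and $\pm(s-2)\sqrt{d}$ this can a priori exhaust all three elements of $\F_4^\ast$. The paper settles this case separately: in characteristic $2$ the two values $\pm(s-2)\sqrt{d}$ collapse to the single value $s$, and an element of $\F_4$ avoiding $0$, $1$ and $s$ always exists and satisfies $\F_2\!\left[s,r_4^2\right]=\F_4$. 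You need this (easy) extra check, and likewise the explicit inequalities the paper records for $q=p\ge 5$, $q=p^2$ with $p\ge 3$, and $q=p^a$ with $a\ge 3$, to convert ``large enough'' into ``all $q>3$'' and to certify that the only leftover small cases are precisely the ones you handled.
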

\begin{proof}
By Lemma \ref{minimal field} the group $H$ is not conjugate to a
subgroup of $\SL_4(q_0)$ for any $q_0<q$. By  Theorem \ref{classic}(iii),
$H$ is not contained in the normalizer of any classical subgroup of
$\SL_4(q)$. By the results of Sections from \ref{sec:further} to
\ref{S}, $H$ is not contained in any maximal subgroups of $\SL_4(q)$.
Thus $H=\SL_4(q)$.

Now let $q$ and $s$ be given, with $q>3$ and $s$ as in the statement.
We claim that there exists $r_4\ne 0, \pm (s-2)\sqrt d$ such that
$\F_q=\F_p\!\left[r_4^2\right]$. This is clear when $q=p\geq 5$. If $q=p^a$ with
$a>1$ we use  Lemma \ref{subfields}. Namely, when $q=p^2$ with $p\geq
3$, then our claim follows from it and the inequality
$p^2-2(p-1)-3>0$. When $q=p^a$ with $a\geq 3$, our claim follows from
the inequality
$$p^a-N-3\geq p^a-p \left(p^{\left\lfloor a/2\right\rfloor}-1\right)-3 \geq
p^{\left\lfloor a/2\right\rfloor +1}(p-1)+p-3>0.$$
We are left with the cases $q=4$ and
$(q,s,d)\in\left\{(2,0,1),(3,1,\pm 1),(3,-1,\pm 1),(3,0,-1)\right\}$.

If $q=4$, there exists $r_4\ne 0,1, (s-2)\sqrt d=s$. In the remaining
cases, except $(q,s,d)=(3,1,1)$, we may take $r_4=\pm 1$. Finally, if
$(q,s,d)=(3,1,1)$ the $(2,6)$-generation of $\SL_4(3)$ follows from
the $(2,3)$-generation. Indeed $\left\langle x,y\right\rangle =
\SL_4(3)$ gives $\left\langle x,-y\right\rangle \leq \left\langle
x,y\rangle \langle -I\right\rangle =\SL_4(3)$, whence $\left\langle
x,-y\right\rangle =\SL_4(3)$, since this group is perfect.
Alternatively, a MAGMA calculation shows that $\SL_4(3)$ is generated
by $x,y$ as in \eqref{generators}  with $d=1$, $s=1$, $r_1=-1$,
$r_2=r_3=0$, $r_4=1$.
\end{proof}

\begin{theorem}\label{sp4}  Let $\F_q=\F_p\!\left[s, r_4^2\right]$, with
$0\neq r_4\in \F_q$. Define $x$ and $y$ as in \eqref{generators2},
setting $d=-1$, and $r_2=-r_4\neq 0$. Assume $p\neq 2$, $k\neq p$.
If $k=3$ and $p\neq 3$, assume further that $r_4^4\neq-3$.
Under these assumptions $H=\langle x,y \rangle = \Sp_4(q)$.
In particular, for all $k\geq 3$ such that $k\mid (q-1)$ or $k \mid (q+1)$ or $k=2p$,
the groups $\PSp_4(q)$, with $q$ odd,  are $(2,k)$-generated.
\end{theorem}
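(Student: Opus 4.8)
The plan is to mimic the proof of Theorem~\ref{sl4}. First I would establish that $H$ is absolutely irreducible by checking that neither alternative of Corollary~\ref{newreducible} can occur. Since $d=-1$ and $r_2=-r_4$ with $r_4\neq0$, condition (i) $r_2=-\epsilon^{\pm1}r_4$ would force $\epsilon^{\pm1}=1$, while condition (ii) $r_2+r_4=\pm(2-s)\sqrt d$ reads $0=\pm(2-s)\sqrt{-1}$, again forcing $s=2$; in both cases $\epsilon=1$, i.e. $k=p$, which is excluded by hypothesis. Hence $H$ is absolutely irreducible and, by Remark~\ref{rigid triple}, the triple $(x,y,xy)$ is rigid. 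Since $r_4\in\F_q$ and $\F_q=\F_p[s,r_4^2]$, we also have $\F_q=\F_p[s,r_4]$, so Theorem~\ref{classic_positive}(v) gives $H\le\Sp_4(q)$. The remaining task is to show that $H$ is contained in no maximal subgroup of $\Sp_4(q)$.

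If $H\neq\Sp_4(q)$ then $H\le M$ for some maximal $M$, lying in one of the Aschbacher classes. The classes $\mathcal C_1$ and $\mathcal C_3$ are reducible over the algebraically closed field $\F$ and are ruled out by absolute irreducibility; the imprimitive class $\mathcal C_2$ is excluded by Lemmas~\ref{wreath} and~\ref{monomial}; and Lemma~\ref{minimal field}, together with $\F_q=\F_p[s,r_4^2]$, shows $H$ is not conjugate into $\GL_4(\F_1)\F^\ast I$ for any proper subfield $\F_1<\F_q$, which disposes of $\mathcal C_5$.

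The hard part will be the tensor and classical classes $\mathcal C_4,\mathcal C_7,\mathcal C_8$. As noted in the introduction, such an $M$ lies in $\GL_2(\F)\otimes\GL_2(\F)$ and fixes the symmetric form $J\otimes J$ up to a scalar, so Theorem~\ref{classic}(i) yields only $r_2=\pm dr_4$; this is \emph{satisfied} by $r_2=-r_4=dr_4$ and so does not exclude the case by itself. Here I would instead exploit the uniqueness up to scalars of the $H$-invariant form guaranteed by rigidity (the proof of Theorem~\ref{forms}). Let $H_0\trianglelefteq H$ be the finite-index subgroup fixing $J\otimes J$ exactly. If $H_0$ is absolutely irreducible it fixes both a non-degenerate symmetric form and the symplectic form, which are inequivalent since $p\neq2$, contradicting uniqueness. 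If $H_0$ is reducible, Clifford theory forces $H$ either to stabilise a direct-sum decomposition, excluded by Lemmas~\ref{wreath} and~\ref{monomial}, or to preserve a tensor factorisation in which both factors are of symplectic type; in the latter case $H$ fixes $J\otimes J$ \emph{exactly}, again incompatible with $H\le\Sp_4(q)$ by uniqueness. This is the step demanding most care, and it is precisely where $p\neq2$ enters.

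Finally I would dispose of $\mathcal C_6$ and $\mathcal S$. For $\mathcal C_6$, Lemma~\ref{kappa} first forces $s=-1$, and then Lemma~\ref{cond C6} leaves only $r_2=r_4=\pm\sqrt d$ or $r_4^2=2d$ with $r_2=2/(dr_4)$; under $r_2=-r_4$ these give $r_4=0$ or $-2d^2=2$ (i.e. $p=2$), both impossible. For $\mathcal S$ I use Lemmas~\ref{PSp(4,3)},~\ref{A6} and~\ref{PSL2}: with $s=0$ the prescribed $\PSp_4(3)$-values force $\omega^{-1}=i^{2h}\in\{\pm1\}$ for a primitive cube root $\omega$, which is impossible; the cases $\Alt(6),\Sym(6),\Alt(7)$ collapse by Lemma~\ref{A6} to $p=q=7$, $r_4=\pm4$, $s=-1$, and the cubic-representation subgroup $\PSL_2(q)$ forces $k=3$ and $r_4^4=-3$ by Lemma~\ref{PSL2}; since $4^4\equiv-3\pmod7$, all of these are removed precisely by the standing hypothesis $r_4^4\neq-3$ (which applies because $k=3$ with $p=3$ is excluded by $k\neq p$). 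Having excluded every class, $H=\Sp_4(q)$. For the $(2,k)$-generation statement I would note that $s\in\F_q$ whenever $k\mid(q-1)$, $k\mid(q+1)$ or $k=2p$, and then, exactly as in Theorem~\ref{sl4}, apply Lemma~\ref{subfields} to produce some $0\neq r_4\in\F_q$ with $\F_q=\F_p[s,r_4^2]$ and $r_4^4\neq-3$; since $x^2=-I$ is scalar, the images $\bar x,\bar y$ then have orders $2$ and $k$ and generate $\PSp_4(q)=\Sp_4(q)/Z$.
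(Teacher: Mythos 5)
Your proposal follows the paper's own proof in all of its substantive steps, and those steps are carried out correctly: absolute irreducibility via Corollary~\ref{newreducible} (both exceptional cases force $s=2$, i.e.\ $k=p$, which is excluded), containment $H\le\Sp_4(q)$ via Theorem~\ref{classic_positive}(v), exclusion of subfield subgroups via Lemma~\ref{minimal field}, of ${\cal C}_2$ via Lemmas~\ref{wreath} and~\ref{monomial}, of ${\cal C}_6$ via Lemmas~\ref{kappa} and~\ref{cond C6}, and of the class ${\cal S}$ via Lemmas~\ref{PSp(4,3)}, \ref{A6} and~\ref{PSL2}; your computations there (e.g.\ that $r_2=-r_4$ forces $\omega=i^{-2h}=\pm1$ in the $\PSp_4(3)$ case, and that Lemma~\ref{A6} collapses the $\Alt(6)$, $\Sym(6)$, $\Alt(7)$ cases to $p=q=7$, $r_4=\pm 4$ with $(\pm4)^4\equiv-3\pmod 7$) agree with the paper, which reaches the same exclusion via Lemma~\ref{A7} and $r_4=\pm3$. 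Citing Lemma~\ref{A6} is in fact the more apt reference, since it also disposes of the case where $(xy)^5$ is scalar. The closing counting argument via Lemma~\ref{subfields} is likewise the paper's.

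The one place where you genuinely diverge is the treatment of ${\cal C}_4\cup{\cal C}_7\cup{\cal C}_8$, and there your argument has a real gap. You correctly observe that Theorem~\ref{classic}(i) cannot exclude these classes here, because $r_2=-r_4=dr_4$ satisfies its necessary condition; but your Clifford-theoretic dichotomy for the case ``$H_0$ reducible'' is not exhaustive. When $H$ preserves a tensor factorisation $\F^4=V_1\otimes V_2$, there is no reason the two factors should \emph{both} be of symplectic type; indeed they cannot be, since a skew form which is a tensor product of forms must be of mixed type (symmetric on one factor, skew on the other), whereas ${\rm skew}\otimes{\rm skew}$, such as $J\otimes J$, is symmetric. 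So the case that actually needs to be ruled out is $H$ lying in a group of type $(\CO_2\otimes\CSp_2)\cap\Sp_4$, which your dichotomy never addresses. That case can be closed as follows: the orthogonal factor stabilizes the unordered pair of isotropic lines $\{\ell_1,\ell_2\}$ of its form over $\F$, hence $H$ stabilizes the $2$-decomposition $(\ell_1\otimes V_2)\oplus(\ell_2\otimes V_2)$ of $\F^4$, contradicting Lemma~\ref{wreath}. Alternatively --- and this is what the paper implicitly does by appealing to its stated source (Mitchell/Kleidman) for the maximal subgroups of $\Sp_4(q)$, $q$ odd --- one notes that the classes ${\cal C}_4$, ${\cal C}_7$, ${\cal C}_8$ are simply empty for these groups, so there is nothing to exclude. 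As written, however, your argument for this step does not go through, even though the statement it aims at is true and the rest of your proof is sound.
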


\begin{proof}
By Theorem \ref{classic_positive}(v), $H$ is contained in $\Sp_4(q)$.
By Lemma \ref{minimal field} the group $H$ is not conjugate to a
subgroup of $\CSp_4(q_0)\F^\ast I$ for any $q_0<q$. Now we
analyze the conditions implied by the results of
Sections~\ref{sec:further}--\ref{S} (see Table~\ref{tab:4}). Notice
that Corollary~\ref{newreducible} may give exceptional values for
$r_4$ only if $\epsilon=1$ or $s=2$, i.e., when $k=p$, which is
excluded by our assumptions. Since $r_2=-r_4$, Lemma~\ref{cond C6}
and Lemma~\ref{PSp(4,3)} do not give extra conditions. Since $d=-1$,
the conditions given by Lemma~\ref{A7} imply  that $i^{2h}=1$, hence
$r_2=-r_4$ only when $p=7$. In particular $r_4=\pm 3$, values which are
excluded in the statement. By the same reason,
conditions given  by Lemma~\ref{cubics} are excluded. 
We conclude that $H$ is not contained in any maximal subgroup of $\Sp_4(q)$. Thus, $H=\Sp_4(q)$.

Now let $q$ and $s$ be given, with odd $q=p^a$  and $k$ as in the
statement. We claim that it is always possible to find $r_4\neq 0$,
which satisfies the further assumptions of the theorem.

If $k=3$ and $p\neq 3$, let $N_1$ be the number of $r_4\in \F_q$ such that $r_4^4=-3$. Otherwise, let $N_1=0$.
Now our claim is obvious for $q=p\ge 3$, since $p-1>N_1$ in these cases.

If $q=p^a$ with $a>1$, we use Lemma~\ref{subfields} and the number
$N$ defined  therein. For $q=p^2$, our claim follows from the
inequality $p^2-1-N-N_1\ge p^2-2(p-1)-1-N_1>0$, which is valid for
any odd $p$. If $a\ge 3$, our claim follows from
$$p^a-1-N-N_1 \geq p^a-p \left(p^{\left\lfloor a/2\right\rfloor}-1\right)-1-N_1 \geq
p^{\left\lfloor a/2\right\rfloor +1}(p-1)+p-1-N_1>0.$$

\end{proof}


Finally, we consider the unitary case.

\begin{lemma}\label{(2,6)}
Let $x,y$ be as in \eqref{generators}, with $d=1$, $s=1$, $r_1=r_2=\xi$, $r_3=\xi^7$, $r_4=0$,
where $\xi\in \F_9$ is such that $\xi^2-\xi-1=0$. Then $(x,y)$ is a $(2,6)$-generating pair for $\SU_4(9)$.
\end{lemma}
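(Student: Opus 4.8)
The plan is to follow the scheme of the other positive results (Theorems~\ref{sl4} and \ref{sp4}): show that $H$ lies inside $\SU_4(9)$, that it is absolutely irreducible, and then exclude every maximal subgroup. The first point to record is that here $s=1=-2$ in $\F_3$, so we are in the case $k=2p=6$, $\epsilon=-1$ of \eqref{Jordan}: the cubic similarity invariant of $y$ factors as $t^3-2t^2+2t-1=(t-1)(t+1)^2$, its unipotent part has order $3$ and its semisimple part order $2$, whence $y$ has order $6$, while $d=1$ gives $x^2=I$. Thus $(x,y)$ is genuinely a $(2,6)$-pair, and $\det x=\det y=1$.

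First I would prove absolute irreducibility by checking that neither alternative of Lemma~\ref{reducible} (with $\epsilon=-1$) holds: condition (i) reads $r_4=r_1+r_2-r_3$, i.e.\ $0=\xi+1$, which is false; and condition (ii) evaluates, using $\xi^4=-1$, $\xi^7=\xi-1$ and $\xi^8=1$, to $\Delta=-\delta\neq0$ for both $\delta=\pm1$. Granting this, Remark~\ref{rigid triple} makes the triple $(x,y,xy)$ rigid and gives $xy$ a single similarity invariant. I would then verify the hypotheses of Theorem~\ref{forms}(ii) with $\sigma\colon\alpha\mapsto\alpha^3$: one has $x^\sigma=x=x^{-1}$, while $y^\sigma$ and $y^{-1}$ share the similarity invariants of $y$ (their coefficients lie in $\F_3$ and are inversion-invariant), so $y^\sigma\sim y^{-1}$; finally, computing from \eqref{general car xy}--\eqref{general car xy^-1} that $\chi_{xy}(t)=t^4-\xi t^3+t^2+\xi^7 t+1$ and $\chi_{(xy)^{-1}}(t)=t^4+\xi^7 t^3+t^2-\xi t+1$, the relations $\sigma(\xi)=\xi^3=\xi^{-5}$ and $-\xi=\xi^5$ give $\chi_{(xy)^\sigma}=\chi_{(xy)^{-1}}$, so by rigidity $(xy)^\sigma\sim(xy)^{-1}$. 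Theorem~\ref{forms}(ii) then yields an invariant non-degenerate hermitian form, and since the determinants are $1$, we get $H\le\SU_4(9)$.

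The bulk of the work is to exclude the maximal subgroups of $\SU_4(9)$, and here the standing hypothesis $r_1=r_3=0$ of Sections~\ref{sec:further}--\ref{S} fails, so the tailored lemmas cannot be quoted verbatim; I would instead reuse their \emph{form-free} parts together with our explicit $\chi_{xy}$. Classes $\mathcal C_1,\mathcal C_3$ are excluded by irreducibility. For $\mathcal C_2$, the $2$-decomposition is ruled out by Lemma~\ref{wreath}, whose proof uses only $\tr{xy}=\xi\neq0$, and the monomial case by a short check: $\tr{xy}\ne0$ forbids a $4$-cycle and a $(2,2)$-image in $\Sym(4)$, while the remaining cycle types (identity, a transposition, a $3$-cycle) are incompatible with the $t^2$-coefficient of $\chi_{xy}$ being $1\neq0$. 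Class $\mathcal C_5$, including the classical subgroups $\Sp_4(3)$ and $\SO^{\pm}_4(3)$ defined over the fixed field, is excluded by Lemma~\ref{minimal field}, since $\F_p[s,(r_1+r_4)^2]=\F_3[\xi^2]=\F_9\not\le\F_3$. The tensor classes $\mathcal C_4,\mathcal C_7$ (with the bilinear part of $\mathcal C_8$) force $H\le\CO_4(9)$ or $H\le\CSp_4(9)$, hence $(xy)^{-1}\sim\pm xy$ by the $\sigma_1=\mathrm{id}$ computation in the proof of Theorem~\ref{classic} (which uses only the block shapes of $x,y$); this fails, as neither $\chi_{(xy)^{-1}}(t)=\chi_{xy}(t)$ nor $\chi_{(xy)^{-1}}(t)=\chi_{xy}(-t)$. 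Finally, for $\mathcal C_6$ I would invoke Lemma~\ref{kappa}: its exclusion of $k=6$ rests only on $\tr{\mu(y)}=(s+2)^2=0$ and on $\dim C_{\Mat_4(\F)}(y^2)$, both independent of $r_1,r_3$, so $H$ lies in no $\mathcal C_6$ subgroup.

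The main obstacle is the class $\mathcal S$, whose members for $\SU_4(9)$ are covers of $\Alt(7)$ and of $\PSL_3(4)$. Lemma~\ref{A7} cannot be cited, as it compared against the special polynomials \eqref{car xy}; worse, comparing $\chi_{xy}(t)=t^4-\xi t^3+t^2+\xi^7 t+1$ with the characteristic polynomial of an order-$7$ element of the relevant $4$-dimensional representation produces \emph{no} contradiction (one is led to $h=3$ and to the requirement $\omega+\omega^2+\omega^4=-\xi$, a cubic period of a primitive $7$th root of unity that does lie in $\F_9$). Thus the characteristic polynomial of $xy$ alone cannot settle the case, and a second invariant is needed. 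To finish I would compute the order of a second word, say $xy^2$ or $xyxy^{-1}$, and show its projective order exceeds $7$ — the maximal element order occurring in $\Alt(7)$, $\PSL_2(7)$ and $\PSL_3(4)$ — or, equivalently, verify that none of the defining relations of Lemma~\ref{pres} holds for $\bar x,\bar y$. This last, essentially computational, step (where MAGMA is used) excludes $\mathcal S$ and, with the reductions above, yields $H=\SU_4(9)$; rigidity of the triple guarantees that the single pair $(x,y)$ is representative of its conjugacy class.
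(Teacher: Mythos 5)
Your first two paragraphs are sound and coincide with the paper's own argument: the check that neither condition of Lemma~\ref{reducible} holds (your computation $\Delta=-\delta\neq 0$ is correct), then rigidity via Remark~\ref{rigid triple}, the equality $\chi_{(xy)^\sigma}=\chi_{(xy)^{-1}}$, and Theorem~\ref{forms}(ii) to get $H\le\SU_4(9)$. The genuine gap is in your treatment of the class $\mathcal{S}$, which is the actual content of the lemma. Having correctly observed that $\chi_{xy}$ is compatible with a projective element of order $7$, you reduce everything to ``compute the order of a second word, say $xy^2$ or $xyxy^{-1}$, and show its projective order exceeds $7$'' --- but you never exhibit a word that works, nor its order, so the decisive step is simply not performed; it is a plan, not a proof. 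Worse, the alternative you present as equivalent --- verifying that none of the presentations of Lemma~\ref{pres} holds for $\bar x,\bar y$ --- is \emph{not} sufficient: those presentations concern generating tuples of specific orders (a $(2,3)$-pair in (i)--(ii), a $(2,7)$-pair in (iii), a $(2,4)$-pair in (iv), involution triples and order-3 quintuples in (v)--(vi)), and the failure of the single $(2,6)$-pair $(\bar x,\bar y)$ to satisfy them says nothing about whether $\langle\bar x,\bar y\rangle$ embeds into $\Alt(7)$ or $\PSL_3(4)$. So as written the proof does not close.

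There are also two smaller defects in the class-by-class salvage. In the monomial case of $\mathcal{C}_2$, your argument from the $t^2$-coefficient is invalid: a monomial matrix whose image in $\Sym(4)$ is the identity, a transposition or a $3$-cycle can have arbitrary middle coefficient, so its nonvanishing excludes nothing; Lemma~\ref{monomial} instead uses explicit commutator entries computed under $r_1=r_3=0$, which would have to be redone here. For $\mathcal{C}_6$, Lemma~\ref{kappa} is not independent of $r_1,r_3$: it quotes Lemma~\ref{evenorder}, whose proof uses that the block $R$ of $y$ has rank one, whereas here $\det R=-\xi\cdot\xi^7=-1$, so $R$ is invertible. (The conclusion can be repaired: $\tau(y)$ cannot have order $1$ or $2$ since $y^2$ has order $3$ and $N$ is a $2$-group, and cannot have order $3$ since $\tr{\mu(y)}=(s+2)^2\equiv 0$ while the order-$3$ classes in Table~\ref{tab:1} have trace $\not\equiv 0 \pmod 3$ --- but this check is missing from your text.) Finally, note that the paper's proof goes the other way around and avoids the whole Aschbacher analysis you attempt to rescue: it exhibits the words $x^y$ and $(y^2x)^3$ whose projective images satisfy the $\PSL_3(4)$-presentation of Lemma~\ref{pres}(iv); since the only maximal subgroups of $\PSU_4(9)$ of order divisible by $7$ are isomorphic to $\Alt(7)$, $\PSL_3(4)$ or $\PSU_3(9)$, this copy of $\PSL_3(4)$ is itself maximal; and then the word $w=(xy)^2(xy^2)^4(xy^5)^2y^3$, whose ninth power is scalar of order $4$, has projective order $9$, an order $\PSL_3(4)$ does not possess, forcing $\langle x,y\rangle=\SU_4(9)$. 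That containment-plus-maximality idea is exactly what your proposal is missing, and it replaces the unverified ``find a word of order $>7$'' step by a concrete, checkable computation.
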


\begin{proof}
$\left\langle x,y\right\rangle$ is absolutely irreducible by Lemma \ref{reducible}.
Using the rigidity of the triple $(x,y,xy)$, \eqref{general car xy} and \eqref{general car xy^-1},
it follows from {\textrm{(ii)}} of Theorem \ref{forms} that $\left\langle x,y\right\rangle\leq \SU(4,9)$.
Calling $\overline a, \overline b$ the projective images of $x^y$ and $(y^2x)^3$ respectively,
direct calculation shows that $\overline a$ and $\overline b$ satisfy the presentation of $\PSL_3(4)$ 
given in Lemma \ref{pres}(iv). The only  maximal subgroups
of $\PSU_4(9)$ whose order is divisible by $7$ belong to the class $\mathcal{S}$ and
are isomorphic either to $A_7$, to $\PSL_3(4)$ or to $\PSU_3(9)$ (see \cite{A}). It follows that
$\left\langle \overline a, \overline b\right\rangle\cong \PSL_3(4)$ is maximal in $\PSU_4(9)$.
Finally let $w=(xy)^2 (xy^2)^4 (xy^5)^2 y^3$. Then $w^9$ is scalar, of order 4.
Since $\PSL_3(4)$ does not have elements of order $9$, we conclude that
$\langle x,y\rangle =\SU_4(9)$.
\end{proof}

\begin{theorem}\label{su4}
Let $s\in \F_q$, $r_4\in \F_{q^2}$ and
 \begin{equation}
  \label{eq:su4qr4}
  \F_{q^2}=\F_p\!\left[ r_4^2\right].
 \end{equation}
  Define $x$ and $y$ as in
\eqref{generators2}, setting $r_2=dr_4^q$. Assume that $q\neq 3$.
Suppose further that

{\rm (i)} $r_4^{q-1} \neq -d \epsilon^{\pm1}$;

{\rm (ii)} $r_4+dr_4^{q} \neq \pm \sqrt{d}(2-s)$;

{\rm (iii)} if $q=p\equiv 3,5,6 \pmod{7}$ and $s=-1$, then for $h=0,1,2,3$ 
$$(r_2,r_4)\neq  \left(-i^{3h} (\lambda \sqrt{-7}-1)/2, di^h
(\lambda\sqrt{-7}+1)/2)\right),\ \lambda=\pm 1 ;$$

{\rm (iv)} if $q=p\equiv 5 \pmod{6}$ and $s=0$, then $(r_2,r_4)\neq
(i^{-h}\omega, di^h\omega^2)$, where $\omega$ is a primitive cubic
root of 1.

Then $H=\langle x,y\rangle= \SU_4(q^2)$.
\end{theorem}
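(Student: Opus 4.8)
The plan is to follow the same two-stage strategy used for Theorems \ref{sl4} and \ref{sp4}: first locate $H$ inside $\SU_4(q^2)$, then rule out every maximal overgroup class by class, using the summary preceding Table \ref{tab:4}.

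First I would verify that $H$ is absolutely irreducible. Conditions (i) and (ii) are built to be exactly the negations of the reducibility criteria of Corollary \ref{newreducible}: since $r_2=dr_4^q$, condition (i) is equivalent to $r_2\neq -\epsilon^{\pm 1}r_4$ (divide $dr_4^q=-\epsilon^{\pm1}r_4$ by $r_4\neq0$ and multiply by $d$), while condition (ii) is literally $r_2+r_4\neq\pm(2-s)\sqrt d$. With irreducibility secured I would invoke Theorem \ref{classic_positive}(i) to deduce $H\le\SU_4(q^2)$; note that $\F_{q^2}=\F_p\!\left[r_4^2\right]$ forces $\F_p\!\left[r_4\right]=\F_{q^2}$, and that $s\in\F_q$ together with $r_2=dr_4^\sigma$ supplies the remaining hypotheses of that part.

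The bulk of the argument is to show $H$ lies in no maximal subgroup, traversing the Aschbacher classes. The classes $\mathcal{C}_1\cup\mathcal{C}_3$ are excluded by absolute irreducibility and $\mathcal{C}_2$ by Lemmas \ref{wreath} and \ref{monomial}. For $\mathcal{C}_5$, Lemma \ref{minimal field} applies at once because $\F_p\!\left[s,r_4^2\right]=\F_{q^2}$ is already the full field. For $\mathcal{C}_8$ (which in dimension $4$ absorbs $\mathcal{C}_4\cup\mathcal{C}_7$), Theorem \ref{classic}(i) would force $r_2=\pm dr_4$, hence $r_4^q=\pm r_4$ and $(r_4^2)^{q-1}=1$, so $r_4^2\in\F_q$, contradicting $\F_p\!\left[r_4^2\right]=\F_{q^2}$. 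For $\mathcal{C}_6$ (present only when $p\neq2$), the two alternatives of Lemma \ref{cond C6} each put $r_4^2\in\F_p$, the same contradiction. The recurring device throughout is that $\F_p\!\left[r_4^2\right]=\F_{q^2}$ is incompatible with $r_4^2$ lying in a proper subfield.

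The delicate part, and where I expect the real work, is the class $\mathcal{S}$. I would first record that $(xy)^5$ cannot be scalar: by Lemma \ref{powers}(ii) that would force $r_4^2=\mp d\in\F_p$, impossible; this is precisely what licenses the application of Lemmas \ref{A7} and \ref{PSp(4,3)}, both of which assume $(xy)^5$ non-scalar. By Table \ref{tab:3} the only candidates for $M/Z$ in the unitary setting are $\Alt(7)$ (needing $q=p\equiv3,5,6\pmod 7$), $\PSp_4(3)$ (needing $q=p\equiv5\pmod 6$), and $\PSL_3(4)$ (needing $q=3$). The last is removed by the standing hypothesis $q\neq3$, which is exactly what that hypothesis is for. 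Lemma \ref{A7} pins the $\Alt(7)$ case to $s=-1$ and the explicit pair \eqref{statement}, the values forbidden by condition (iii), while Lemma \ref{PSp(4,3)} pins the $\PSp_4(3)$ case to $s=0$ and the pair forbidden by condition (iv). The subtlety one must track is that these exceptional values only arise when $q=p$ is prime and satisfies the stated congruence, so conditions (iii) and (iv) are vacuous for proper prime powers; checking that the hypotheses genuinely suffice in every case is the main obstacle. Once all classes are eliminated, $H$ avoids every maximal subgroup and therefore $H=\SU_4(q^2)$.
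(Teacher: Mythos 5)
Your proposal is correct and follows essentially the same route as the paper's proof: absolute irreducibility from (i)--(ii) via Corollary \ref{newreducible}, containment in $\SU_4(q^2)$ via Theorem \ref{classic_positive}(i), exclusion of the geometric classes through Lemma \ref{minimal field}, Theorem \ref{classic} and Lemma \ref{cond C6} (all reduced to the incompatibility of $r_4^2$ lying in a proper subfield with \eqref{eq:su4qr4}), and finally the class $\mathcal{S}$ handled by the non-scalarity of $(xy)^5$ from Lemma \ref{powers}(ii), the hypothesis $q\neq 3$ against $\PSL_3(4)$, and Lemmas \ref{A7} and \ref{PSp(4,3)} matched to conditions (iii) and (iv). The only difference is expository: you spell out the $\mathcal{C}_8$ and $\mathcal{C}_6$ contradictions explicitly where the paper compresses them into a reference to Sections \ref{sec:further}--\ref{C6}.
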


\begin{proof}
By~\eqref{eq:su4qr4}, $r_4\neq 0$. Hence, assumptions (i)--(ii)
together with Corollary~\ref{newreducible} imply that $H$ is
absolutely irreducible. By Theorem~\ref{classic_positive}(i), $H\le
\SU_4(q^2)$. By Lemma \ref{minimal field} the group $H=\left\langle
x,y\right\rangle$ is not conjugate to a subgroup of
$\SL_4(q_0)\F^\ast I$ for any $q_0<q^2$.

Notice that $H$ cannot be a subgroup of the groups described in
Lemma~\ref{cond C6}, as for these cases we would have $r_2=r_4$ hence
$r_4\in \F_q$ in contrast with~\eqref{eq:su4qr4}. Thus, the analysis
made in Sections~\ref{sec:further}--\ref{C6} shows that, if $H\neq
\SU(4,q^2)$, then it can be only a subgroup of a maximal group $M$
from the class $\mathcal{S}$.

Since we assume that $q\neq 3$, $M/Z\not \cong \mathrm{PSL}_3(4)$. Thus,
according to Table~\ref{tab:3}, it remains to consider $M$ with
$M/Z\cong \mathrm{Alt}(7)$ for $q=p\equiv 3,5,6 \pmod{7}$ and
$M/Z\cong \mathrm{PSp}_4(3)$ for $q=p\equiv 5 \pmod{6}$.

By Lemma~\ref{powers}(ii), $(xy)^5$ cannot be scalar, since this may
happen only if $r_4^4=1$, but in that case $r_4^2=\pm1$ in contrast
with~\eqref{eq:su4qr4}. Thus we may apply Lemmas~\ref{A7}
and~\ref{PSp(4,3)}. But these cases are excluded by our assumptions
(iii) and (iv), respectively. Therefore, $H=\langle x,y\rangle=
\SU_4(q^2)$.
\end{proof}

\begin{theorem}
 \label{thm:su4-2}
The groups $\SU_4(q^2)$ and $\PSU_4(q^2)$ are $(2,k)$-generated for
all $k\ge 3$ such that $k\mid (q-1)$ or $k\mid (q+1)$ or $k=p$ or
$k=2p$, except $(q,k)= (2,3)$, $(3,3)$, and $(3,4)$.
\end{theorem}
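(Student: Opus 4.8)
Throughout I would take $d=1$ in \eqref{generators2}, so that $x$ is a genuine involution of $\SU_4(q^2)$; with $d=-1$ one would instead have $x^2=-I$, which still serves for the projective group but not for $\SU_4(q^2)$ itself. The plan is to deduce the statement from Theorem~\ref{su4}. For every $k$ with $k\mid(q-1)$, $k\mid(q+1)$, $k=p$ or $k=2p$ we have $s\in\F_q$ (as noted just before Theorem~\ref{sl4}), while $y$ and its projective image have order $k$; so it suffices to exhibit, for each admissible pair $(q,k)$ outside the three stated exceptions, an element $0\neq r_4\in\F_{q^2}$ with $\F_{q^2}=\F_p\!\left[r_4^2\right]$ satisfying conditions (i)--(iv) of Theorem~\ref{su4}. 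For such an $r_4$ one gets $H=\langle x,y\rangle=\SU_4(q^2)$ with $x$ an involution and $y$ of order $k$, and passage to the quotient by the scalar centre yields the corresponding generation of $\PSU_4(q^2)$.

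The heart of the matter is a counting argument in the spirit of Theorems~\ref{sl4} and~\ref{sp4}, and I would first bound the number of values $r_4\in\F_{q^2}^\ast$ that must be discarded. Writing $q=p^a$ and applying Lemma~\ref{subfields} to $\F_{q^2}=\F_{p^{2a}}$, the values with $\F_p\!\left[r_4^2\right]\neq\F_{q^2}$ number at most the quantity $N$ of that lemma (with $a$ replaced by $2a$). Condition (i) forbids the fibres of the power map $r_4\mapsto r_4^{q-1}$ over the two values $-d\epsilon^{\pm 1}$, hence at most $2(q-1)$ elements. Condition (ii), since $r_4+dr_4^{q}$ is the $\F_q$-valued relative trace, an additive map with kernel of size $q$, removes at most $2q$ elements. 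Finally (iii) and (iv) exclude only finitely many $r_4$, and only under the mutually exclusive hypotheses $s=-1$ and $s=0$. Comparing these totals against $q^2-1$ would show that a good $r_4$ exists for all sufficiently large $q$.

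It then remains to clear the small fields. The three genuine exceptions are exactly the non-generation facts already established: $\SU_4(4)\cong\PSp_4(3)$ is not $(2,3)$-generated by Theorem~\ref{nr}(ii); $\PSU_4(9)$ is not $(2,3)$-generated, and hence neither is $\SU_4(9)$, since a $(2,3)$-pair would project to one in $\PSU_4(9)$; and neither $\SU_4(9)$ nor $\PSU_4(9)$ is $(2,4)$-generated, by Lemma~\ref{SL} and Theorem~\ref{PSU}. For $q=3$, where Theorem~\ref{su4} does not apply, the only admissible value left is $k=6$, which is settled directly by Lemma~\ref{(2,6)}. For $q=2$ the only admissible value besides $k=3$ is $k=4$; here Theorem~\ref{su4} does apply (there are no relevant subgroups in the classes $\mathcal{C}_6$ or $\mathcal{S}$ when $q=2$), and any $r_4\in\F_4\setminus\F_2$ is readily seen to satisfy all its hypotheses. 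The few further small prime powers for which the crude union bound above is inconclusive, such as $q=4,5,7$, I would treat by a sharper count that exploits the overlaps between the discarded sets---note that every solution of (i) or (ii) lying in $\F_q$ has already been counted among the subfield values---or, failing that, by exhibiting an explicit admissible $r_4$.

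The step I expect to be the main obstacle is precisely this borderline regime: for the smallest fields the excluded sets coming from subfields and from conditions (i)--(ii) are of the same order of magnitude as $q^2-1$, so the naive union bound does not separate them, and one must either account carefully for their substantial overlaps or verify by hand (or by machine) that a suitable $r_4$ survives. A secondary difficulty is that $q=3$ falls entirely outside Theorem~\ref{su4} and has to be covered by the ad hoc construction of Lemma~\ref{(2,6)}.
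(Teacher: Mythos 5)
Your proposal is correct and follows essentially the same route as the paper: deduce everything from Theorem \ref{su4} via a union bound on the excluded values of $r_4$ (subfield values from Lemma \ref{subfields} applied to $\F_{q^2}$, at most $2(q-1)$ from condition (i), at most $2q$ from condition (ii), plus the finitely many mutually exclusive exceptions from (iii)--(iv)), handle the exceptional pairs by the negative results of Section \ref{negres}, and settle $(q,k)=(3,6)$ by Lemma \ref{(2,6)}. The only difference is one of completeness rather than method: where you defer the borderline fields to ``a sharper count or an explicit $r_4$,'' the paper carries this out, showing the crude bound already succeeds for $q=p\ge 7$ and $q=p^a\ge 8$ and then exhibiting explicit admissible values $r_4=\alpha^b$ for $q=4,5$ in Table \ref{tab:5} (and $r_4=\omega,\omega^2$ for $q=2$, $k=4$), exactly as you predicted would be needed.
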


\begin{proof}
For any fixed $q$ and $k$ (i.e.,  $s$ is also fixed) we count the
number of non-zero $r_4$'s that satisfy the conditions of
Theorem~\ref{su4}. Let $N$ be the number of elements $r_4\neq 0$ in
$\F_{q^2}$ such that $\F_p\!\left[r_4^2\right] \ne \F_{q^2}$ and let $N_1$ be the
number of those $r_4$'s that do not satisfy conditions (i)--(iv) of
Theorem~\ref{su4}. Conditions (iii) and (iv) may give at most 8
exceptions each, but they concern different $s$. Thus, $N_1\le 4q+6$
for any fixed $k$. Moreover, if $q=p^a$ with $a>1$, then we have
$N_1\le 4q-2$ since  cases (iii) and (iv) do not arise.

If $q=p$, then using Lemma~\ref{subfields} for the field $\F_{q^2}$
we have
 $$
  q^2-1-N-N_1 \ge p^2-1-(4p+6)-2(p-1)=p^2-6p-5>0
 $$
if $p\ge 7$. If $q=p^a$ with $a>1$, then using Lemma~\ref{subfields} for the field
$\F_{q^2}$ we have
 $$
  q^2-1-N-N_1 \ge p^{2a}-1-(p^{a+1}-p)-(4p^a-2)
   =(p^a-4)(p^a-p)-3p+1>0
 $$
provided $p^a\ge 8$.

Thus, only  $q=2$, $3$, $4$, and $5$ are left.

If $q=5$, let $\alpha\in\F_{25}$ satisfy $\alpha^2-\alpha+2=0$. In
Table~\ref{tab:5}, for each admissible $k$ and $d$, we list $b$ such
that $r_4=\alpha^b$ satisfies the conditions of Theorem~\ref{su4}.
Since in each case the set of such $b$'s is non-empty, this proves
our Theorem also for $q=5$.

If $q=4$, let $\alpha\in\F_{16}$ satisfy $\alpha^4+\alpha+1=0$. In
particular $\omega=\alpha^5$ is a cubic root of 1. It is enough to
consider $d=1$. In Table~\ref{tab:5}, for each admissible $k$  we
list $b$ such that $r_4=\alpha^b$ satisfies the conditions of
Theorem~\ref{su4}. Since in each case the set of such $b$'s is
non-empty, this proves our Theorem also for $q=4$.

For $q=2$, $k=4$, notice that $r_4=\omega$ and  $r_4=\omega^2$, where
$\omega$ is a primitive cubic root of 1, satisfy the conditions of
Theorem~\ref{su4}. Alternatively, one can use the isomorphism
$\SU_4(4)\cong\PSp_4(3)$ and Theorem~\ref{sp4}.

Finally, for $q=3$, $k=6$, Theorem~\ref{sp4} does not produce
suitable generators of shape \eqref{generators2}, but our claim
follows from Lemma~\ref{(2,6)}.
\end{proof}

\begin{table}[ht]
  
\begin{tabular}{r|r|r|r|l}
 $q$ & $d$ & $k$ & $s$ & $b$ \\ \hline{}
5&  1 & 4 & 0  & 1,4, 5,8, 13,16,17,20 \\
  &  & 3 & $-1$ & 4, 8, 16, 20 \\
   & & 6 &  1 & 7, 11, 19, 23 \\
   & & 5 &  2 & 1, 2, 4, 5, 7, 8, 10, 11, 13, 14, 16, 17, 19, 20, 22, 23 \\
   & &10 & $-2$ & 2, 7, 10, 11, 14, 19, 22, 23 \\
\hline
 5 &  $-1$ & 4 &  0 & 1, 2, 5, 7, 10, 11, 13, 14, 17, 19, 22, 23 \\
   & & 3 & $-1$ & 7, 11, 19, 23 \\
   & & 6 &  1 & 2, 4, 8, 10, 14, 16, 20, 22 \\
   & & 5 &  2 & 1, 2, 4, 5, 7, 8, 10, 11, 13, 14, 16, 17, 19, 20, 22, 23 \\
   & &10 & $-2$ & 1, 2, 4, 5, 7, 8, 10, 11, 13, 14, 16, 17, 19, 20, 22, 23\\\hline \hline
  4& 1 & 3 & $-1$       & 3, 6, 7, 9, 11, 12, 13, 14 \\
  &&  5 & $\omega$   & 1, 4, 11, 14 \\
  &&  5 & $\omega^2$ & 2, 7, 8, 13 \\
   && 4 & 0          & 1, 2, 3, 4, 6, 7, 8, 9, 11, 12, 13, 14
\end{tabular}

\caption{{}\quad }  \label{tab:5}

\end{table}


\begin{thebibliography}{99}

\bibitem{A84} M. Aschbacher, `On the maximal subgroups of the finite classical groups', 
{\em Invent. Math. } 76 (1984), 469--514.

\bibitem{CMY}   J. J. Cannon, J. McKay and K. C. Young,  `The non-abelian simple groups
$G$, $|G|<105$-presentations', {\em Comm. in Algebra} 7 (1979), 1397-1406.

\bibitem{CD} M. Cazzola and L. Di Martino, `$(2,3)$-generation of
${\rm PSp}(4,q)$, $q=p^n$, $p\neq 2,3$', {\em  Results Math.}  23  (1993), no. 3-4, 221--232.

\bibitem{A}  J. Conway, R. Curtis, S. Norton, R. Parker and R. Wilson,  `Atlas of Finite Groups',
Clarendon Press, Oxford, 1985.

\bibitem{CM} H. S. M. Coxeter and W. O. J. Moser, `Generators and
    Relations for Discrete Groups', Springer, Berlin, 1972.

\bibitem{F}  D. E. Flesner, `Maximal subgroups of $\PSp_4(2^n)$ containing central elations
or noncentered skew elations', {\em Illinois J. Math.} 19 (1975), 247--268.

\bibitem{GPPS}  R. Guralnick, T. Penttila, C. Praeger and J. Saxl, 
`Linear groups with orders having certain large prime divisors',
{\em  Proc. London Math. Soc.} (3) 78 (1999), no. 1, 167--214.

\bibitem{J}  N. Jacobson, `Basic Algebra I', Second Edition, W.H. Freeman and Company, 1985.

\bibitem{KL82}  W. M. Kantor and R. A. Liebler, 
`The rank 3 permutation representations of the finite classical groups',
{\em  Trans. Amer. Math. Soc.} 271 (1982), 1--71.

\bibitem{Ki}  O. H. King, `The subgroup structure of finite classical groups 
in terms of geometric configurations',  Surveys in combinatorics 2005
29--56, {\em London Math. Soc. Lecture Note Ser.} 327, Cambridge Univ. Press, Cambridge, 2005.

\bibitem{K}  P. Kleidman, `The maximal subgroups of the low dimensional classical groups', Ph.D. Thesis, Cambridge 1987.

\bibitem{KL} P. Kleidman and M. W. Liebeck, `The Subgroup Structure of the Finite Classical Groups', Cambridge University Press, 1990.

\bibitem{LS} M. W. Liebeck and A. Shalev, `Classical groups, probabilistic methods, and the $(2,3)$-generation problem, {\em Ann.  Math.} 144 (1996), 77--125.

\bibitem{MT} P. Manolov and K. Tchakerian, `$(2,3)$-generation of the groups
$\PSL_4(2^m)$', {\em  Annuaire Univ. Sofia Fac. Math. Inform.}  96  (2004), 101--104.

\bibitem{Ma} C. Marion,  `Triangle groups and finite simple groups',
 Ph.D.Thesis, Imperial College, London, 2009.

\bibitem{ML} G. A. Miller, `On the groups generated by two operators', 
{\em Bull. AMS} 7 (1901), 424--426.

\bibitem{M} H. H. Mitchell, `The subgroups of the quaternary abelian linear group',
{\em  Trans. Amer. Math. Soc.} 15  (1914),  377--396.

\bibitem{M76} B. Mwene, `On the maximal subgroups of the group $\PSL_4(2^m)$', 
{\em J.Algebra } 41 (1976), 79--107.

\bibitem{M82} B. Mwene, `On some subgroups of $\PSL_4(q)$, $q$ odd', 
{\em Geom. Dedicata }  12 (1982), 189--199.

\bibitem{Sc} L. L. Scott, `Matrices and cohomology',
{\em Ann. Math.} 105 (1977), 473-492.


\bibitem{SV} K. Strambach and H.  V\"olklein, `On linearly rigid
tuples', {\em J. Reine Angew. Math.} 510 (1999), 57--62.

\bibitem{SZ}  I. Suprunenko and A. Zalesskii, `Classification of finite irreducible linear groups
of degree 4 on fields of characteristic $p>5$', 
{\em Izv. Mat. Akad. Nauk. B.S.S.R.S.} (1976).

\bibitem{TV1} C. Tamburini and  S. Vassallo, `$(2,3)$-generation of ${\rm SL}(4,q)$ in odd characteristic and
associated problems'. {\em Boll. Un. Mat. Ital.} B (7)  8  (1994),  no. 1, 121--134.

\bibitem{TV2} C. Tamburini and S. Vassallo, `$(2,3)$-generation of linear groups,
Writings in honor of Giovanni Melzi',  {\em  Sci. Mat. Vita e Pensiero, Milano} 11 (1994) 391--399.

\bibitem{TV} C. Tamburini and M. A. Vsemirnov, `Irreducible $(2,3,7)$-subgroups 
of $\PGL_n(\F)$, $n\leq 7$',  {\em J.Algebra}  300 (2006), 339-362.

\bibitem{VeVs} A. M. Vershik and  M. A. Vsemirnov, `The local stationary
    presentation of the alternating groups and normal form',
     {\em J.~Algebra} 319 (2008), no.~10, 4222--4229.

\bibitem{VZ} R. Vincent and A. Zalesskii, `Non-Hurwitz  classical groups',
{\em LMS J. Comput. Math} 10 (2007), 21--82.

\bibitem{V}  M. A. Vsemirnov, `More classical groups, which are not $(2,3)$-generated' (submitted).

\bibitem{V2009} M. A. Vsemirnov, `Hurwitz and $(2,3)$-generated groups of low rank.'
D. Sci. Thesis, St. Petersburg State University, 2009 (in Russian).

\bibitem{Z}  A. Zalesskii, `Classification of finite linear groups
of degree 4 and 5 over fields of characteristic $2$',  {\em Dokl. Akad. Nauk. BSR}
(1977), 389--392.
\end{thebibliography}
 \end{document}